\newtheorem{assumption}{Assumption}
\newtheorem{theorem}{Theorem}
\newtheorem{definition}{Definition}
\newtheorem{lemma}[theorem]{Lemma}
\newtheorem{rem}{Remark}
\newtheorem{example}{Example}
\newtheorem{proposition}{Proposition}
\newcommand{\mb}[1]{\mathbf{#1}}
\newcommand{\mbb}[1]{\mathbb{#1}}
\newcommand{\mc}[1]{\mathcal{#1}}
\def\T{\operatorname{T}}
\def\Id{\operatorname{Id}}
\def\R{\mathbb{R}}
\def\N{\mathbb{N}}
\def\prox{\operatorname{prox}}
\def\hbx{\hat{\mathbf{x}}}
\def\hx{\hat{x}}
\def\bx{\mathbf{x}}
\def\by{\mathbf{y}}
\def\bz{\mathbf{z}}
\icmltitlerunning{Delay-agnostic Asynchronous Coordinate Update Algorithm}
\begin{document}

\twocolumn[
\icmltitle{Delay-agnostic Asynchronous Coordinate Update Algorithm}




\begin{icmlauthorlist}
		\icmlauthor{Xuyang Wu}{kth}
		\icmlauthor{Changxin Liu}{kth}
		\icmlauthor{Sindri Magn\'{u}sson}{su}
		\icmlauthor{Mikael Johansson}{kth}
	\end{icmlauthorlist}
	\icmlaffiliation{kth}{Division of Decision and Control Systems, EECS, KTH Royal Institute of Technology, Stockholm, Sweden}
	\icmlaffiliation{su}{Department of Computer and System Science, Stockholm University, Stockholm, Sweden}
	\icmlcorrespondingauthor{Xuyang Wu}{xuyangw@kth.se}

	\icmlkeywords{Machine Learning, ICML}

\vskip 0.3in
]



\printAffiliationsAndNotice{}  

\begin{abstract}

We propose a delay-agnostic asynchronous coordinate update algorithm (DEGAS) for computing operator fixed points, with applications to asynchronous optimization. DEGAS includes novel asynchronous variants of ADMM and block-coordinate descent as special cases. We prove that DEGAS converges under both bounded and unbounded delays under delay-free parameter conditions. We also validate by theory and experiments that DEGAS adapts well to the actual delays. The effectiveness of DEGAS is demonstrated by numerical experiments on classification problems.

\end{abstract}

\section{Introduction}\label{sec:intro}

Many popular algorithms in machine learning, optimization, and game theory can be formulated as fixed point iterations
\begin{equation}\label{eq:fpupdate}
    \mathbf{x}(k+1) = \operatorname{T}(\mb{x}(k)),
\end{equation}
where $k$ is the iteration index, $\bx(k)\in\mathbb{R}^d$ is the iterate at iteration $k$, and $\T:\mathbb{R}^d\rightarrow \mathbb{R}^d$ is an operator. For example, the gradient descent method for minimizing a differentiable function $f$ is on the form \eqref{eq:fpupdate} with $\T(\bx)=\bx-\gamma \nabla f(\bx)$ for some positive step-size parameter $\gamma>0$.


In machine learning applications, the problem dimension is sometimes so large that evaluating the full operator $\T$ in each iteration is impractical.
For these problems, coordinate update methods \cite{Nesterov12, wright2015coordinate} have proven to be very competitive. 
These methods split the decision vector $\bx$ into multiple blocks, $\bx=(x_1,\ldots,x_m)$, and only update one block $i$ in each iteration
\begin{equation}\label{eq:coordinateupdate}
    x_i(k+1) = \T_i(\mb{x}(k)),
\end{equation}
while $x_{j}(k+1)=x_j(k)$ for all $j\neq i$.
Here, $\T_i$ is the $i$th block of $\T$ such that $\T(\bx)=(\T_1(\bx), \ldots, \T_m(\bx))$. In many cases, the cost of computing $\T_i$ can be much lower than that of computing the whole $\T$~\cite{Nesterov12}.



A natural approach for accelerating coordinate update methods is to implement them on multiple processors/machines in a distributed environment. For example, in each iteration we may let every processor compute $\T_i$ for a randomly selected block $i$, update all the selected blocks, and then go to the next iteration \cite{richtarik2016distributed}. We consider this to be a synchronous update, since all the processors are synchronized and the algorithm does not proceed to the next iteration until all processors finish their work. Due to the use of multiple processors, synchronous coordinate update methods can converge significantly faster than the centralized coordinate update \eqref{eq:coordinateupdate}. However, their convergence speed is bottlenecked by the slowest processor and they are sensitive to single-node failures. In contrast, asynchronous coordinate update methods eliminate the need for global synchronization and can be more efficient and robust.

This paper focuses on asynchronous coordinate updates. Although these are useful in a wide range of sciences,
this work only discusses applications  in optimization and ML.


\subsection{Related work}

In the past few decades, there has been a growing interest in developing parallel and asynchronous machine learning algorithms. As a part of this effort, a large number of asynchronous and distributed optimization algorithms with strong practical performance have been developed, including
Async-SGD \cite{recht2011hogwild}, Asynchronous ADMM \cite{zhang2014asynchronous}, PIAG \cite{aytekin16, Sun19, Feyzmahdavian21}, Async-BCD \cite{liu2014,Wu2022adaptive}, DAve-RPG \cite{mishchenko2018delay}, DAve-QN \cite{soori20a}, and ADSAGA \cite{glasgow2022asynchronous}. Most of these algorithms are tailored to specific computing architectures such as master-worker \cite{LiM13} or shared-memory \cite{bertsekas2003parallel}, while algorithms such as AsySPA \cite{zhang2019asyspa}, DFAL \cite{aybat2015asynchronous}, and the Asynchronous primal-dual algorithm \cite{wu2017decentralized}
consider general communication topologies. 

Two of the most
influential frameworks for asynchronous coordinate update methods 
are due to 
\cite{bertsekas1983distributed} and \cite{Peng16}, respectively. In contrast to the related works cited above, which focus on solving specific classes of optimization problems, they consider asynchronous coordinate updates for the more general problem of finding fixed points of operators. Specifically, \citet{bertsekas1983distributed} proposes the following asynchronous implementation of~\eqref{eq:coordinateupdate}:
\begin{equation}\label{eq:Bertsekas}
    x_i(k+1) = \T_i(\hbx(k)),
\end{equation}
where $\hbx(k) = (\hx_1(k), \ldots, \hx_n(k))$ with each $\hx_j(k)=x_j(k-\tau_j(k))$ for some integer $\tau_j(k)\ge 0$. Here,  $\tau_j(k)$ represents the information delay from node $j$, i.e. the difference between the current iteration index and the index of the iterate block used for computing $\T_i$. However, this framework rarely applies to machine learning problems, since it is only guaranteed to converge if $\T$ is contractive in a block-maximum norm; see~\S~2.1.1. Even for gradient descent iterations on quadratic optimization problems, this condition only holds if the Hessian is diagonally dominant.  


The ARock framwork of~\citet{Peng16} considers the modified coordinate updates
\begin{equation}\label{eq:ARock}
    x_i(k+1) = x_i(k)+\gamma(k)(\T_i(\hbx(k))-\hx_i(k)),
\end{equation}
where $\gamma(k)>0$ is the step-size. Unlike \cite{bertsekas1983distributed}, ARock only requires $\T$ to be non-expansive and applies to 
modern algorithms like BCD \cite{Nesterov12} and ADMM \cite{boyd2011distributed}.
However, like most asynchronous algorithms that use fixed step-sizes, such as PIAG \cite{aytekin16} and Async-BCD \cite{liu2014}, existing convergence results require that delays are uniformly bounded and rely on step-size restrictions for $\gamma(k)$ that depend on this (typically unknown) delay bound. This causes difficulties in practice: using a large delay bound (to ensure that it is valid) leads to a small step-size, and unnecessarily slow convergence. In addition, guarding against the maximum delay leads to overly conservative results if most delays are smaller than the maximum delay.  Indeed, a number of recent papers report delay measurements for asynchronous optimization algorithms that show that real-world delays tend to be distributed in this way; see, \emph{e.g.},  \cite{mishchenko2022asynchronous, Wu2022adaptive, koloskovasharper} and our own measurements in Figure \ref{fig:delayLASSOCIFAR} in Appendix \ref{sec:delaydistribution}. As a specific example, 
\citet{mishchenko2022asynchronous} implement an asynchronous SGD on a 40-core CPU and report a maximum and average delay of around 1200 and 20, respectively.

\subsection{Contribution}


In this paper, we propose an alternative way to perform asynchronous coordinate updates. This approach, which we call the 
DElay-aGnostic ASynchronous coordinate update (DEGAS) algorithm, 
adapts the updates 
\eqref{eq:Bertsekas} to a master-worker architecture \cite{LiM13} and samples the update block uniformly at random. We show that with these modifications, the new algorithm preserves the advantages of \eqref{eq:Bertsekas} and \eqref{eq:ARock} and avoids their drawbacks in the sense that

i) Like \eqref{eq:Bertsekas}, DEGAS is free from parameters that depend on the delay. In this way, it avoids ARock's issues with hard-to-determine and conservative step-sizes. Moreover, by characterizing how the convergence of DEGAS is affected by the distribution of delays in a stochastic delay model, 
we show that convergence is faster when small delays are more likely than large delays. This is in contrast to  ARock, whose convergence rate is dominated by the worst-case (largest) delay and whose performance does not improve even if the actual delays are much smaller. This can be observed by scrutinising the convergence bounds in \cite{Peng16} and is confirmed 
in our numerical results. 

ii) DEGAS converges under the same conditions on $\T$ as ARock, and can therefore be used for parallel and asynchronous implementations of 
a wide range of 
modern optimization methods, including 
BCD and ADMM. We prove that DEGAS converges under both bounded and unbounded delays. For bounded delays, we provide an explicit convergence rate and show that the iterates of DEGAS converge faster than the best-known bound for ARock.
To derive this result, we prove a linear rate for a general class of asynchronous sequences  which significantly sharpens a lemma from \cite{Feyzmahdavian21}. 

We illustrate the superior performance of DEGAS (including ADMM and BCD) in training of large scale models.

\subsection*{Notation and Preliminaries}

We let $\mathbb{N}$ be the set of natural numbers, and $\mathbb{N}_0=\N\cup\{0\}$. We denote $[m] = \{1,\ldots,m\}$ for any $m\in\mathbb{N}$ and define the proximal operator of a function $R:\mathbb{R}^d\rightarrow\mathbb{R}\cup\{+\infty\}$ as
$\operatorname{prox}_{R}(x) = \operatorname{\arg\!\min}_{y\in\mbb{R}^d} ~R(y)+\frac{1}{2}\|y-x\|^2$. We call a differentiable function $f:\R^d\rightarrow\R$  $L$-smooth if $\langle \nabla f(\bx)-\nabla f(\by), \by-\bx\rangle \le L\|\by-\bx\|^2~\forall \bx,\by\in\mathbb{R}^d$, and $\mu$-strongly convex if $\langle \nabla f(\bx)-\nabla f(\by), \by-\bx\rangle \ge \mu\|\by-\bx\|^2~\forall \bx,\by\in\mathbb{R}^d$. We use $\Id$ to denote the identity operator of proper dimension. For any operator $\T:\R^d\rightarrow\R^d$, $\mbox{Fix} \T=\{\bx: \bx=\T\bx\}$ represents its set of fixed-points. We use $\|\cdot\|$ to represent the Euclidean norm for vectors and the spectral norm for matrices. For any vector $\bx=(x_1,\ldots,x_m)\in\mathbb{R}^N$ and $w=(w_1,\ldots,w_m)\in\mathbb{R}^m$ where each $x_i\in\mathbb{R}^{d_i}$ and $w_i>0$, we define $\|\bx\|_{b,\infty}^w=\max_{i\in [m]} \frac{\|x_i\|_i}{w_i}$ as the block-maximum norm, where each $\|\cdot\|_i$ can be any vector norm.

\vspace{-0.1cm}

\section{Algorithm and main result}


In this section, we present our algorithm for finding the fixed point of an operator $\T$, analyze its convergence, and highlight its advantages over ARock \cite{Peng16}.

\subsection{Algorithm}

We adapt the asynchronous update \eqref{eq:Bertsekas} to the widely-used master-worker architecture \cite{LiM13} for distributed learning. Here, a master node stores the current model $\bx$ and coordinates the work of $n$ compute nodes. Each worker $w\in[n]$ asynchronously and continuously receives $\bx$ from the master, stores it in the local variable $\bx^w$, computes $\T_i(\bx^w)$ for some $i\in[m]$ drawn uniformly at random, and returns $\T_i(\bx^w)$ to the master. Once the master receives $\T_i(\bx^w)$ from some worker $w$, it updates
\begin{equation}\label{eq:asyncupdate}
    x_i = \T_i(\bx^w)
\end{equation}
and pushes the updated model back to the idle workers. 
A detailed implementation is given in Algorithm \ref{alg:Coor}, which we refer to as the DElay-aGnostic ASynchronous coordinate update (DEGAS) algorithm.
	\begin{algorithm}[tb]
		\caption{DEGAS}
		\label{alg:Coor}
		\begin{algorithmic}[1]
		    \STATE {\bfseries Setup:} initial iterate $\bx(0)$.
			\STATE {\bfseries Initialization:} the master sets $\bx=\bx(0)$ and broadcasts $\bx$ to all workers.
			\WHILE{\emph{not interrupted by master}: each worker $w\in [n]$ \emph{asynchronously} and \emph{continuously}}
			\STATE receive $\bx$ from the master and assign $\bx^w=\bx$.
			\STATE sample $i\in [m]$ uniformly at random.
			\STATE compute $\T_i(\bx^w)$.
			\STATE send $(\T_i(\bx^w),i)$ to the master.
			\ENDWHILE
			\WHILE{\emph{not converged}: the master}
			\STATE receive $(\T_i(\bx^w),i)$ from a worker $w$.
			\STATE update $x_i\leftarrow\T_i(\bx^w)$.
			\STATE send $\bx$ to the worker $w$.
			\ENDWHILE
		\end{algorithmic}
	\end{algorithm}

For the convenience of further discussion, we index the iterates by $k\in \N_0$, which represents the number of updates by the master, and use $i(k)$ to denote the updated block at time $k$. Note that in DEGAS, each $\bx^w$ in \eqref{eq:asyncupdate} is a delayed iterate and equals to $\bx(k-\tau(k))$ for some integer $\tau(k)\in [0,k]$. We refer to $\tau(k)$ as the delay at time $k$. In this way, the update at time $k\in\N_0$ can be equivalently rewritten as
\begin{equation}\label{eq:Alg1formal}
    x_j(k+1) = \begin{cases}
        \T_j(\bx(k-\tau(k))), & j=i(k),\\
        x_j(k), & \text{otherwise}.
    \end{cases}
\end{equation}

\subsubsection{Connection with existing works}\label{ssec:connection}

DEGAS can be viewed as an adaption of the asynchronous update \eqref{eq:Bertsekas} to the master-worker architecture. They are logically equivalent except for the particular block selection rule in DEGAS. Existing works on \eqref{eq:Bertsekas} mainly focused on the setting where the numbers of blocks and processors (or workers) are identical and each processor updates a certain block. Under this setting, to guarantee convergence~they often require $\T$ to be contractive in the block-maximum norm \cite{bertsekas2003parallel}, \emph{i.e.} to satisfy
\begin{equation}\label{eq:maxnormcontraction}
    \|\T(\bx)-\T(\bx^\star)\|_{b,\infty}^w<c\|\bx-\bx^\star\|_{b,\infty}^w
\end{equation}
for some $c\in(0,1)$, some $\bx^\star\in \mbox{\rm Fix} \T$ and for every $\bx\in\mathbb{R}^d$, where the block-maximum norm $\|\cdot\|_{b,\infty}^w$ is defined in Section \ref{sec:intro}. The condition \eqref{eq:maxnormcontraction} is restrictive and only holds for very specific operators, e.g., \cite{FROMMER1991105}, \cite{bertsekas2003parallel}, \cite{mehyar2007asynchronous}, \cite{moallemi2010convergence}, and \cite{hale2017asynchronous}. Even for the simple operator $\T=\Id-\frac{1}{L}\nabla f$ where $f(x)=\frac{1}{2}\bx^TA\bx+b^T\bx$ for a symmetric and positive definite matrix $A\in\mathbb{R}^{m\times m}$ and a vector $b\in\mathbb{R}^m$ and $L=\|A\|_2$, the condition \eqref{eq:maxnormcontraction} is known to hold only when $A$ is diagonally dominant. In the next subsection we will show that DEGAS, 
by letting each processor update a random block in \eqref{eq:Bertsekas}, can converge under a much weaker condition.

The ARock framework~\cite{Peng16} uses updates that are rather different. First, 
while~\eqref{eq:Bertsekas} only depends on $\hat{\bx}(k)$, the ARock updates \eqref{eq:ARock} are based on both $\hat{\bx}(k)$ and $x_i(k)$. To guarantee convergence, the maximally allowable step-size depends on the (usually unknown and large) worst-case delay, and decays quickly with the upper bound on the delays. This makes the algorithm difficult to tune and unnecessarily slow in practice. In contrast, DEGAS does not need access to the upper delay bound for tuning, but converges for all bounded delays. Moreover, Example 1 in \cite{feyzmahdavian2014delayed} provides a comparison between two delayed gradient methods, which are special cases of DEGAS and ARock with one block and one worker, respectively. They show that for a simple problem, the method specialized from DEGAS strictly outperforms the one from ARock, which suggests the superiority of the algorithmic form of DEGAS. We admit that the ARock framework is more flexible because it allows for inconsistent read and write while DEGAS does not. Due to this reason, ARock can be implemented on both the master-worker and the shared memory system, while DEGAS can only be implemented in the former where inconsistent read and write can be practically avoided.

Some existing asynchronous optimization methods can also converge with step-sizes that do not rely on the worst-case delay. Their step-sizes can be categorized as 1) delay-free fixed step-size; 2) delay-adaptive step-size; 3) delay-free diminishing step-size. We are only aware of four other asynchronous algorithms that converge with delay-free fixed step-sizes: the delayed proximal gradient method~\cite{feyzmahdavian2014delayed}, the asynchronous ADMM \cite{zhang2014asynchronous}, DAve-RPG \cite{mishchenko2018delay}, and the asynchronous level bundle method~\cite{iutzeler2020asynchronous}. The first three algorithms are different from DEGAS and, unfortunately, do not cover
coordinate update methods like BCD and ADMM. \citet{zhang2014asynchronous} assume that at each iteration, each worker has the same probability of sending results to the master, which is less practical. The works \cite{sra2016adadelay,Wu2022adaptive,cohen2021asynchronous,koloskovasharper} avoid using the worst-case delay by adapting step-sizes to the actual delays or the errors caused by actual delays, where \cite{Wu2022adaptive} studies PIAG and the asynchronous BCD and the remaining focus on the asynchronous SGD. The works \cite{agarwal2011distributed,zhou2018distributed,aviv21a} show convergence of the asynchronous SGD or its variants, under delay-free diminishing step-sizes that are effective in stochastic optimization but may lead to slow convergence if we apply them to deterministic optimization.

\subsection{Convergence analysis}\label{ssec:convana}

Throughout the paper, we assume the independence between the delays and the selected blocks.
\begin{assumption}\label{asm:blockdelayindependent}
    The delay sequence $\{\tau(k)\}_{k\in\N_0}$ and the block sequence $\{i(k)\}_{k\in\N_0}$ are independent.
\end{assumption}
Assumption \ref{asm:blockdelayindependent} is a standard assumption and is assumed in many asynchronous optimization works, e.g., ARock \cite{Peng16}, asynchronous SGD \cite{recht2011hogwild, mishchenko2022asynchronous}, and asynchronous coordinate descent \cite{liu2015}. However, it may not hold in practice if $\T_i$ is more expensive to compute for some block $i$ than the others \cite{leblond:18}. Recent advances for relaxing Assumption \ref{asm:blockdelayindependent} include before read labeling \cite{Mania17}, after read labeling \cite{leblond:18}, and single coordinate consistent ordering \cite{cheung2021fully}.

We first consider the case where all delays are bounded.
\begin{assumption}[Partial asynchrony]\label{asm:boundedelay}
	For some $\bar{\tau}\in\N_0$, $\tau(k)\le \bar{\tau}$ for all $k\in \N_0$.
\end{assumption}

We analyze two classes of operators $\T$ defined next. 
\begin{definition}[Averaged operator]\label{def:firmlynonex}
    The operator $\T:\R^d\rightarrow\R^d$ is an $\alpha$-averaged operator if 
    $\T=(1-\alpha)\Id + \alpha R$ for some $\alpha\in(0,1)$ and some non-expansive operator $R$. 
\end{definition}

\begin{definition}[Pseudo-contractive operator]\label{def:contraction}
    The operator $\T:\R^d\rightarrow\R^d$ is pseudo-contractive with modulus $c\in(0,1)$ if $\mbox{Fix}\T\ne \emptyset$ and for any $\bx^\star\in Fix\T$ and $\bx\in\R^d$,
    \begin{equation}
        \|\T(\bx)-\T(\bx^\star)\| \le c\|\bx-\bx^\star\|.
    \end{equation}
\end{definition}
Examples of averaged operators include the proximal operator $\prox_{f}$, of a closed and convex function $f$, the gradient descent operator $\Id-\gamma \nabla f$, $\gamma\in(0,2/L)$ of a convex and $L$-smooth $f$, the Douglas-Rachford splitting of two $1/2$-averaged operators and the forward-backward splitting of a maximally monotone operator and a cocoercive operator. These operators may be pseudo-contractive under stronger conditions \cite{bauschke2011convex}.

\begin{theorem}\label{thm:boundedgeneral} 
    Let $\bx^{\star}\in  Fix\T$ and $\{\bx(k)\}$ be generated by DEGAS under Assumptions~\ref{asm:blockdelayindependent}--\ref{asm:boundedelay}. If $T$ is averaged, then
    \begin{equation}\label{eq:1overkconverge}
    	\min_{t\le k} \mathbb{E}\left[\|(\Id-\T)(\mb{x}(t))\|\right] = O(1/k).
    \end{equation}
	If $T$ is also pseudo-contractive with modulus $c\in(0,1)$ then 
	\begin{equation}\label{eq:boundedgenerallinear}
	    \mathbb{E}[\|\mathbf{x}(k)-\mathbf{x}^\star\|^2] \le \rho_a^k \|\mathbf{x}(0)-\mathbf{x}^\star\|^2
	\end{equation}
	holds for all $k\in \N_0$ where $\rho_a=(1-\frac{1-c^2}{m})^{\frac{1}{1+\bar{\tau}/m}}$.
\end{theorem}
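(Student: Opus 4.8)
The plan is to exploit the fact that exactly one uniformly random block is updated at each step, so that conditioning on the history through the choice of $\tau(k)$ but before drawing $i(k)$, and invoking Assumption~\ref{asm:blockdelayindependent}, collapses the block average into the full operator. Since $\bx(k+1)$ and $\bx(k)$ differ only in coordinate $i(k)$ and $x^\star_{i(k)}=\T_{i(k)}(\bx^\star)$, I would first establish the exact identity
\begin{equation*}
\mathbb{E}\!\left[\|\bx(k+1)-\bx^\star\|^2\mid\mathcal{F}_k\right] = \left(1-\tfrac{1}{m}\right)\|\bx(k)-\bx^\star\|^2 + \tfrac{1}{m}\|\T(\hbx(k))-\bx^\star\|^2,
\end{equation*}
where $\hbx(k)=\bx(k-\tau(k))$. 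This single identity drives both claims; the two operator classes are then handled by bounding the last term differently.

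For the pseudo-contractive case I would substitute $\|\T(\hbx(k))-\bx^\star\|^2\le c^2\|\hbx(k)-\bx^\star\|^2$. Writing $V(k)=\mathbb{E}[\|\bx(k)-\bx^\star\|^2]$, taking total expectation, and using Assumption~\ref{asm:blockdelayindependent} to express $\mathbb{E}[\|\hbx(k)-\bx^\star\|^2]$ as a delay-distribution average of past $V(s)$ (bounded by their maximum), together with $\tau(k)\le\bar\tau$, yields the max-type recursion
\begin{equation*}
V(k+1)\le\left(1-\tfrac{1}{m}\right)V(k)+\tfrac{c^2}{m}\max_{k-\bar\tau\le s\le k}V(s).
\end{equation*}
The linear rate then follows by induction on $V(k)\le\rho_a^k V(0)$: assuming the bound up to $k$ and using $\rho_a<1$ to evaluate the maximum at $s=k-\bar\tau$, it suffices to check the scalar inequality $(1-\tfrac1m)\rho_a^{\bar\tau}+\tfrac{c^2}{m}\le\rho_a^{\bar\tau+1}$ for $\rho_a=(1-\tfrac{1-c^2}{m})^{1/(1+\bar\tau/m)}$. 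Verifying this inequality is exactly the sharpened sequence lemma advertised in the introduction, and it is the \emph{main obstacle}: the proposed $\rho_a$ is a deliberately clean, valid (though suboptimal) surrogate for the true root of $r^{\bar\tau+1}=(1-\tfrac1m)r^{\bar\tau}+\tfrac{c^2}{m}$, and confirming the inequality for all $\bar\tau\in\N_0$ and $c\in(0,1)$ calls for a convexity/Bernoulli-type estimate rather than a direct factorization (one can check it holds with equality when $m=1$).

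For the averaged case I would instead use $\|\T(\hbx(k))-\bx^\star\|^2\le\|\hbx(k)-\bx^\star\|^2-\frac{1-\alpha}{\alpha}\|(\Id-\T)(\hbx(k))\|^2$. Plugging this into the fundamental identity leaves a delay-induced drift term $\tfrac1m(\|\hbx(k)-\bx^\star\|^2-\|\bx(k)-\bx^\star\|^2)$, whose troublesome part is the cross term in $\hbx(k)-\bx(k)=\sum_{s=k-\tau(k)}^{k-1}(\bx(s+1)-\bx(s))$. I would absorb it by augmenting the Lyapunov function with a weighted sum of recent increments, $\mathcal{L}(k)=\mathbb{E}[\|\bx(k)-\bx^\star\|^2]+\sum_{j=1}^{\bar\tau}w_j\,\mathbb{E}[\|\bx(k-j+1)-\bx(k-j)\|^2]$, selecting the weights $w_j$ so that the telescoping of the increment terms cancels the drift; here one uses $\|\bx(k)-\hbx(k)\|^2\le\bar\tau\sum_{s=k-\bar\tau}^{k-1}\|\bx(s+1)-\bx(s)\|^2$ and $\mathbb{E}[\|\bx(s+1)-\bx(s)\|^2\mid\mathcal{F}_s]=\tfrac1m\|\T(\hbx(s))-\bx(s)\|^2$. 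The target is a one-step inequality $\mathcal{L}(k+1)\le\mathcal{L}(k)-\tfrac{c_0}{m}\mathbb{E}[\|(\Id-\T)(\hbx(k))\|^2]$ for some $c_0>0$; summing gives $\sum_k\mathbb{E}[\|(\Id-\T)(\hbx(k))\|^2]\le m\,\mathcal{L}(0)/c_0<\infty$, so the minimal expected squared residual over $t\le k$ is $O(1/k)$, and transferring the bound from $\hbx(t)$ to $\bx(t)$ through non-expansiveness and the same increment estimate delivers \eqref{eq:1overkconverge}.

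The two principal difficulties are thus isolated: the scalar verification underpinning the sharpened sequence lemma for the linear rate, and the choice of increment weights $w_j$ that makes the drift cancel exactly in the averaged case. Both reduce to bounded, deterministic estimates once the fundamental identity and the boundedness of delays are in place, while Assumption~\ref{asm:blockdelayindependent} is what legitimizes replacing the per-coordinate averages by full-operator quantities throughout.
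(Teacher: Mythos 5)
Your fundamental identity is exactly the paper's starting point (its equation \eqref{eq:nonincreasingdis}), and your treatment of the pseudo-contractive case follows the paper's route: the same max-type recursion, the same induction, and the same reduction to the scalar inequality $p+q\rho_a^{-\bar\tau}\le\rho_a$ with $p=1-\frac1m$, $q=\frac{c^2}{m}$. That inequality is precisely the content of the paper's Lemma \ref{lemma:sequencelemma}, which is verified there by the Bernoulli-plus-convexity argument you anticipate (Bernoulli's inequality reduces it to the nonpositivity of a quadratic $h(\alpha)$ that vanishes at $\alpha=p+q$ and $\alpha=1$). So for \eqref{eq:boundedgenerallinear} your plan is sound, with the one hard verification correctly isolated but not carried out.

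For the averaged case, however, your proposed route has a genuine gap. After Proposition 4.25 the one-step bound is $V(k+1)\le\frac1m V(k-\tau(k))+(1-\frac1m)V(k)-W(k-\tau(k))$, and you propose to rewrite the drift $\frac1m\bigl(V(k-\tau(k))-V(k)\bigr)$ in terms of increments and cancel it with a weighted-increment Lyapunov function. The obstruction is the cross term $\frac2m\mathbb{E}\langle\hbx(k)-\bx(k),\bx(k)-\bx^\star\rangle$: it is \emph{linear} in the increments but scales with $\|\bx(k)-\bx^\star\|$. Young's inequality turns it into $\frac{\epsilon}{m}V(k)$ plus increment terms, and since $\T$ is only averaged there is no contraction factor (and, unlike ARock, no small step-size $\gamma$) available to absorb the resulting coefficient $1+\frac{\epsilon}{m}>1$ on $V(k)$; alternatively, Cauchy--Schwarz with the a priori bound $V(k)\le V(0)$ leaves a term proportional to $\sqrt{\mathbb{E}\|\hbx(k)-\bx(k)\|^2}$, and summability of squared increments does not give summability of their square roots, so the telescoping never closes. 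This is exactly why the paper does \emph{not} expand $V(k-\tau(k))$ around $V(k)$: it keeps the convex combination as is, bounds it by $\max_{\max(0,k-\bar\tau)\le\ell\le k}V(\ell)$, and runs an epoch argument over the intervals $\mc{A}(t)=[t(\bar\tau+1),(t+1)(\bar\tau+1))$, showing $V(a(t))\le V(a(t-1))-W(a(t)-1-\tau(a(t)-1))$ for the per-epoch maximizers $a(t)$ and telescoping over epochs to get $\min_{\ell\le k}W(\ell)\le V(0)/(k/(\bar\tau+1)-2)=O(1/k)$. You would need to replace your Lyapunov construction with an argument of this max-over-window/epoch type to complete the first claim.
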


\begin{proof}
See Appendix \ref{appendix:proofthmboundegeneral}.
\end{proof}
In Theorem \ref{thm:boundedgeneral}, the expectation is taken over historical block selections. The linear rate \eqref{eq:boundedgenerallinear} is derived by using Lemma \ref{lemma:sequencelemma} in Appendix \ref{appendix:proofthmboundegeneral}, which establishes a linear convergence rate for a class of asynchronous sequences that \emph{significantly sharpens} the rate in \cite{Feyzmahdavian21}.


The rate in Theorem \ref{thm:boundedgeneral} is tight in the sense that it is of the same order as the best-known rates for the \emph{centralized} coordinate update \eqref{eq:coordinateupdate}. When $\bar{\tau}=0$, the rate $\rho_a$ in \eqref{eq:boundedgenerallinear} reduces to the typical rate $\rho_c:=1-\frac{1-c^2}{m}$ of the centralized coordinate update. Moreover, for any $\epsilon>0$, to achieve $\mathbb{E}[\|\bx(k)-\bx^\star\|^2]\le \epsilon$, DEGAS requires at most
\begin{equation}\label{eq:ratetocomplexity}
    K_a(\epsilon):=(1+\bar{\tau}/m)K_c(\epsilon)
\end{equation}
iterations, where $K_c(\epsilon)=\log_{1/\rho_c}\frac{\|\bx(0)-\bx^\star\|^2}{\epsilon}$ is the iteration complexity of the centralized coordinate update method for achieving the same accuracy.

\begin{rem}[Linear speed-up]
Suppose that $\bar{\tau}$ is proportional to the number $n$ of workers. This happens when workers are updated in a cyclic order, and is a good approximation for many distributed architectures for small to moderate values of $n$.
Then, 
 by \eqref{eq:ratetocomplexity},
\begin{equation}\label{eq:complexity}
    K_a(\epsilon)=(1+\Theta(n)/m)K_c(\epsilon).
\end{equation}
If the computation time of $\T_i$ dominates the per-iteration cost of DEGAS, then a single iteration of DEGAS takes $1/n$ of the time of a centralized coordinate update~\eqref{eq:coordinateupdate} \cite{Peng16}. Combining this observation with~\eqref{eq:complexity} reveals that DEGAS needs
\begin{equation}\label{eq:ratiotime}
    1/n+\Theta(1)/m
\end{equation}
times that of the centralized coordinate update to achieve a given accuracy. 
Note that \eqref{eq:ratiotime} is approximately inversely proportional to $n$ when $m\gg n$. This phenomenon  is called linear speedup \cite{Peng16} and is a desirable property of distributed optimization/learning algorithms.
\end{rem}

\begin{rem}[Comparison with ARock]\label{rem:ARockratecom}
\citet{Peng16} establish a linear convergence rate $O(\rho^k)$ for ARock that is improved in~\cite{Feyzmahdavian21} to 
\begin{equation}\label{eq:rhoinrem}
    \rho = 1-\frac{1-c^2}{m(1+6\left(\frac{\bar{\tau}}{m}+\sqrt{\frac{\bar{\tau}}{m}}\right))}\ge \rho_a^{\frac{1+\bar{\tau}/m}{1+6\left(\frac{\bar{\tau}}{m}+\sqrt{\frac{\bar{\tau}}{m}}\right)}}.
\end{equation}
The inequality in \eqref{eq:rhoinrem} is established in Appendix \ref{appendix:proofofrem}. Hence, to achieve the same accuracy, ARock needs at least $\frac{1+6(\bar{\tau}/m+\sqrt{\bar{\tau}/m})}{1+\bar{\tau}/m}$ times as many iterations of DEGAS. When $\bar{\tau}=m$, this is a factor of roughly $6.5$.
\end{rem}
\subsubsection{Self-adaptivity to actual delays}

Since the maximal delay can be very large while most delays are significantly smaller \cite{mishchenko2022asynchronous, Wu2022adaptive, koloskovasharper}, the ability to adapt to the actual delays and not be significantly slowed down by infrequent occurrences of larger delays is an attractive algorithm feature. We call this property  
\emph{self-adaptivity} of an asynchronous algorithm to actual delays.

Unlike ARock, whose maximally allowable step-size decreases with the maximum delay, DEGAS includes no delay information in its parameters and intuitively has better self-adaptivity. However, in Theorem \ref{thm:boundedgeneral}, the use of worst-case delay in the analysis can give loose convergence rate bounds and does not indicate any advantage of a system
in which the worst-case delay is rarely attained over
one that tends to run with delays close to the worst-case
all the time.
To reveal how the actual delays rather than their upper bound affects the convergence of DEGAS, we consider delays described by the following stochastic model: 
\begin{assumption}\label{asm:stochasticdelay}
    The delays $\{\tau(k)\}_{k\in\N_0}$ are i.i.d. with probability distribution $\mc{P}$, where
    \begin{equation}\label{eq:delaydistribution}
        \operatorname{Pr}(\tau(k)=i) = P_i,~\forall i\in \{0,\ldots,\bar{\tau}\},
    \end{equation}
    with $\sum_{i=0}^{\bar{\tau}} P_i = 1$ and $P_i\ge 0$ $\forall i\in\{0,\ldots,\bar{\tau}\}$.
\end{assumption}
As the next result shows, the convergence rate of Algorithm~\ref{alg:Coor} under such delays can be characterized by
\begin{equation*}
    \phi(\rho)=\rho-\rho_c-\frac{c^2}{m}(\sum_{i=0}^{\bar{\tau}}P_i\rho^{-i}-1).
\end{equation*}
%
\begin{theorem}\label{thm:adaptbounded}
    Suppose that Assumption \ref{asm:stochasticdelay} holds, $\bar{\tau}\ge 1$, and $P_0<1$. Let $\{\bx(k)\}$ be generated by DEGAS. If $\T$ is pseudo-contractive with modulus $c\in(0,1)$ then
	\begin{equation}\label{eq:adapt}
	    \mathbb{E}[\|\mathbf{x}(k)-\mathbf{x}^\star\|^2] \le \rho_{\mathcal P}^k\|\mathbf{x}(0)-\mathbf{x}^\star\|^2,~\forall k\in \N_0,
	\end{equation}
	where $\rho_{\mc{P}}=\alpha_{\mc{P}}\rho_a+(1-\alpha_{\mc{P}})\rho_c$ with \begin{equation}\label{eq:alphavalue}
	    \begin{split}
	       \alpha_{\mc{P}}&=\frac{1}{1-\phi(\rho_a)/\phi(\rho_c)}\in(0,1).
	    \end{split}
	\end{equation}
	In \eqref{eq:alphavalue}, $\phi(\rho_a)\ge 0$ and $\phi(\rho_c)<0$.
\end{theorem}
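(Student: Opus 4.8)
The plan is to reduce the vector dynamics to a scalar recursion for the Lyapunov quantity $V(k):=\mathbb{E}[\|\bx(k)-\bx^\star\|^2]$, to observe that closing an induction $V(k)\le\rho^kV(0)$ requires exactly $\phi(\rho)\ge0$, and finally to exhibit $\rho_{\mc{P}}$ as a point where $\phi\ge0$ via concavity. Starting from the single-block update \eqref{eq:Alg1formal} and using $\bx^\star\in\mbox{Fix}\,\T$ so that $x^\star_{i(k)}=\T_{i(k)}(\bx^\star)$, I would expand $\|\bx(k+1)-\bx^\star\|^2$ (only the $i(k)$-th block changes) and average over the uniformly sampled block, which gives
\begin{equation*}
\mathbb{E}_{i(k)}\|\bx(k+1)-\bx^\star\|^2=\Big(1-\tfrac1m\Big)\|\bx(k)-\bx^\star\|^2+\tfrac1m\|\T(\bx(k-\tau(k)))-\T(\bx^\star)\|^2 .
\end{equation*}
Applying the pseudo-contraction bound $\|\T(\bx(k-\tau(k)))-\T(\bx^\star)\|^2\le c^2\|\bx(k-\tau(k))-\bx^\star\|^2$ and then taking the full expectation, I would condition on the history up to time $k-1$, with respect to which every $\bx(k-j)$ is measurable while $\tau(k)$ is independent (it is i.i.d.\ from $\mc{P}$ and, by Assumption \ref{asm:blockdelayindependent}, independent of the blocks). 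Averaging the delayed term over $\tau(k)$ then yields the mixture $\sum_{j=0}^{\bar{\tau}}P_j V(k-j)$, producing the closed recursion
\begin{equation*}
V(k+1)\le \Big(1-\tfrac1m\Big)V(k)+\tfrac{c^2}{m}\sum_{j=0}^{\bar{\tau}}P_j\,V(k-j),
\end{equation*}
with the convention $\bx(t)=\bx(0)$, hence $V(t)=V(0)$, for $t\le 0$ to handle the transient $k<\bar{\tau}$.

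The second step is an induction: for any $\rho\in(0,1)$ with $\phi(\rho)\ge0$, one has $V(k)\le\rho^kV(0)$ for all $k$. The cases $t\le0$ hold because $\rho^t\ge1$; assuming $V(t)\le\rho^tV(0)$ for all $t\le k$ and substituting gives
\begin{equation*}
V(k+1)\le \rho^kV(0)\Big[\big(1-\tfrac1m\big)+\tfrac{c^2}{m}\sum_{j=0}^{\bar{\tau}}P_j\rho^{-j}\Big],
\end{equation*}
so the step closes (the bracket is $\le\rho$) precisely when $\rho-\rho_c-\tfrac{c^2}{m}(\sum_{j}P_j\rho^{-j}-1)\ge0$, which is $\phi(\rho)\ge0$ after using $1-\tfrac1m=\rho_c-\tfrac{c^2}{m}$. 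It then remains only to verify $\phi(\rho_{\mc{P}})\ge0$.

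For the last step I would record that $\phi$ is strictly increasing and strictly concave on $(0,\infty)$, since $\phi'(\rho)=1+\tfrac{c^2}{m}\sum_j jP_j\rho^{-j-1}>0$ and $\phi''(\rho)=-\tfrac{c^2}{m}\sum_j j(j+1)P_j\rho^{-j-2}<0$. The two sign facts in the statement then follow: $\phi(\rho_c)=-\tfrac{c^2}{m}(\sum_j P_j\rho_c^{-j}-1)<0$ because $\rho_c<1$ makes $\rho_c^{-j}>1$ for $j\ge1$, while $P_0<1$ and $\bar{\tau}\ge1$ place positive mass on some $j\ge1$; and $\phi(\rho_a)\ge0$ because $\rho_a<1$ gives $\sum_j P_j\rho_a^{-j}\le\rho_a^{-\bar{\tau}}$, so $\phi(\rho_a)\ge\rho_a-\rho_c-\tfrac{c^2}{m}(\rho_a^{-\bar{\tau}}-1)$, and the latter is the worst-case (deterministic delay $\bar{\tau}$) expression whose nonnegativity is exactly what certifies the rate $\rho_a$ in Theorem \ref{thm:boundedgeneral}. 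Since $\phi(\rho_a)\ge0>\phi(\rho_c)$, the definition of $\alpha_{\mc{P}}$ rewrites as $\alpha_{\mc{P}}=\tfrac{-\phi(\rho_c)}{\phi(\rho_a)-\phi(\rho_c)}\in(0,1)$, chosen so that the chord vanishes: $\alpha_{\mc{P}}\phi(\rho_a)+(1-\alpha_{\mc{P}})\phi(\rho_c)=0$. Concavity of $\phi$ then yields
\begin{equation*}
\phi(\rho_{\mc{P}})=\phi\big(\alpha_{\mc{P}}\rho_a+(1-\alpha_{\mc{P}})\rho_c\big)\ge\alpha_{\mc{P}}\phi(\rho_a)+(1-\alpha_{\mc{P}})\phi(\rho_c)=0,
\end{equation*}
which closes the induction with $\rho=\rho_{\mc{P}}$ and gives \eqref{eq:adapt}.

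The main obstacle I anticipate is the rigorous handling of the delay expectation: one must argue, via conditioning on the history through time $k-1$, that the i.i.d.\ delay $\tau(k)$ is independent of the (history-measurable) delayed iterate, so that averaging produces the mixture $\sum_j P_j V(k-j)$ rather than a correlated quantity, and one must also treat the startup phase $k<\bar{\tau}$ cleanly through the convention $V(t)=V(0)$ for $t\le0$. The second delicate point is the comparison $\phi(\rho_a)\ge0$, which hinges on interpreting $\rho_a$ as the root of the worst-case condition from Theorem \ref{thm:boundedgeneral}; once these are in place, the scalar recursion, the induction, and the concavity estimate are routine.
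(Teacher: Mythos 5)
Your proposal is correct and follows essentially the same route as the paper: the same scalar recursion $V(k+1)\le(1-\tfrac1m)V(k)+\tfrac{c^2}{m}\sum_jP_jV(k-j)$, the same induction that closes exactly when $\phi(\rho)\ge0$, and the same sign-plus-concavity argument placing $\rho_{\mathcal P}$ at the zero of the chord between $\phi(\rho_c)<0$ and $\phi(\rho_a)\ge0$. The paper merely packages the induction as a standalone sequence lemma (Lemma \ref{lemma:randomsequencelemma}) with general coefficients $\sigma_i$, and certifies $\phi(\rho_a)\ge0$ through the bound \eqref{eq:rhotaulowerbound} from the proof of Lemma \ref{lemma:sequencelemma} rather than your crude estimate $\sum_jP_j\rho_a^{-j}\le\rho_a^{-\bar{\tau}}$ — but these are the same fact, so no substantive difference.
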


\begin{proof}
    See Appendix \ref{append:proofofthmbound}.
\end{proof}

\begin{figure*}[htb]
	\centering
	\caption{Adaptivity of DEGAS and ARock to real delays (small (bound), uniform (bound), and large (bound) are bounds in Theorem \ref{thm:adaptbounded}).}
    \vspace{-0.2cm}
	\subfigure[delay distribution]{
		\includegraphics[scale=0.7]{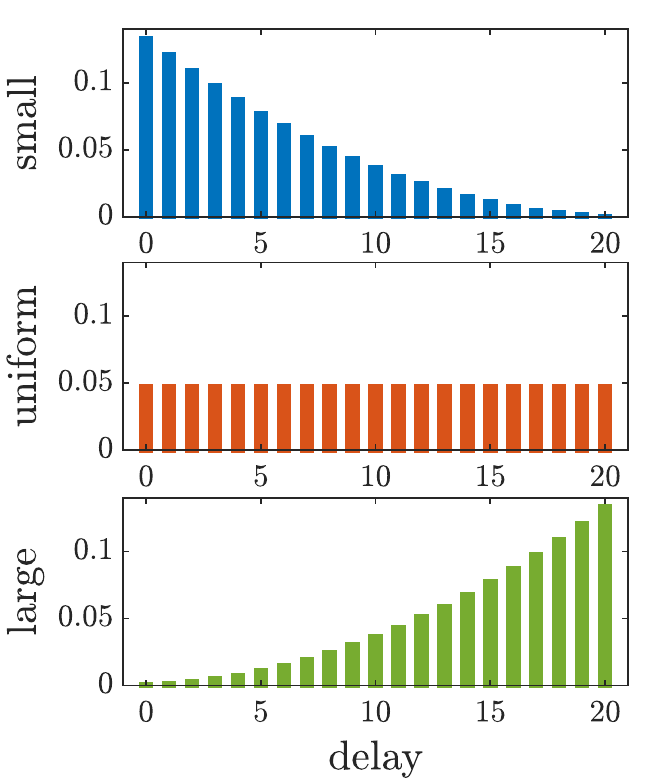}\label{fig:delay}}
	\subfigure[DEGAS]{
		\includegraphics[scale=0.63]{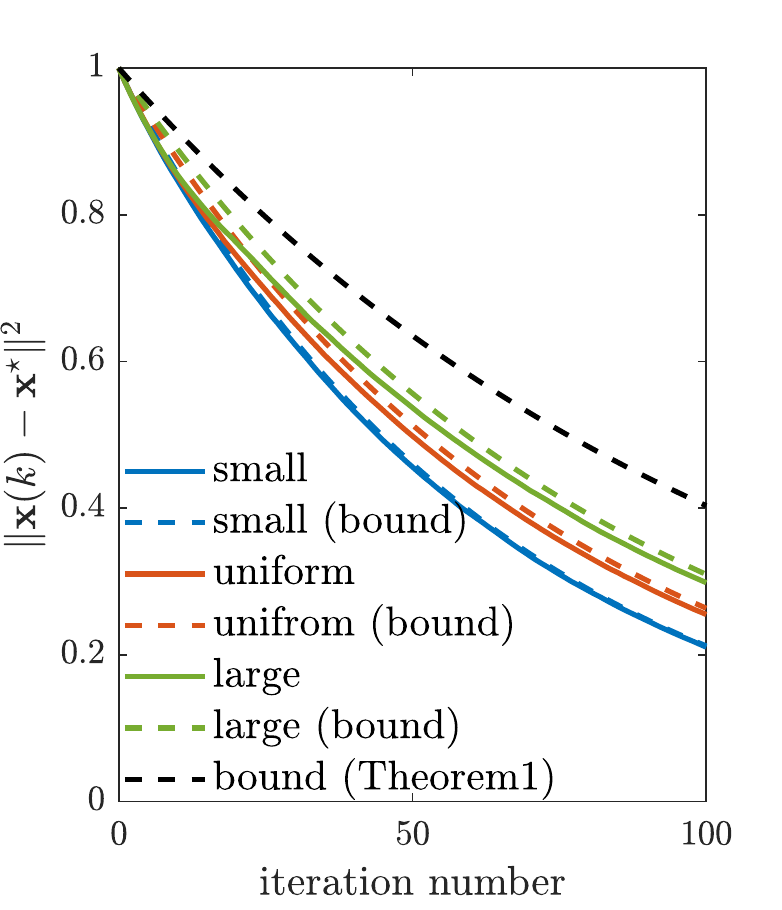}\label{fig:alg1}}
	\subfigure[ARock]{
		\includegraphics[scale=0.63]{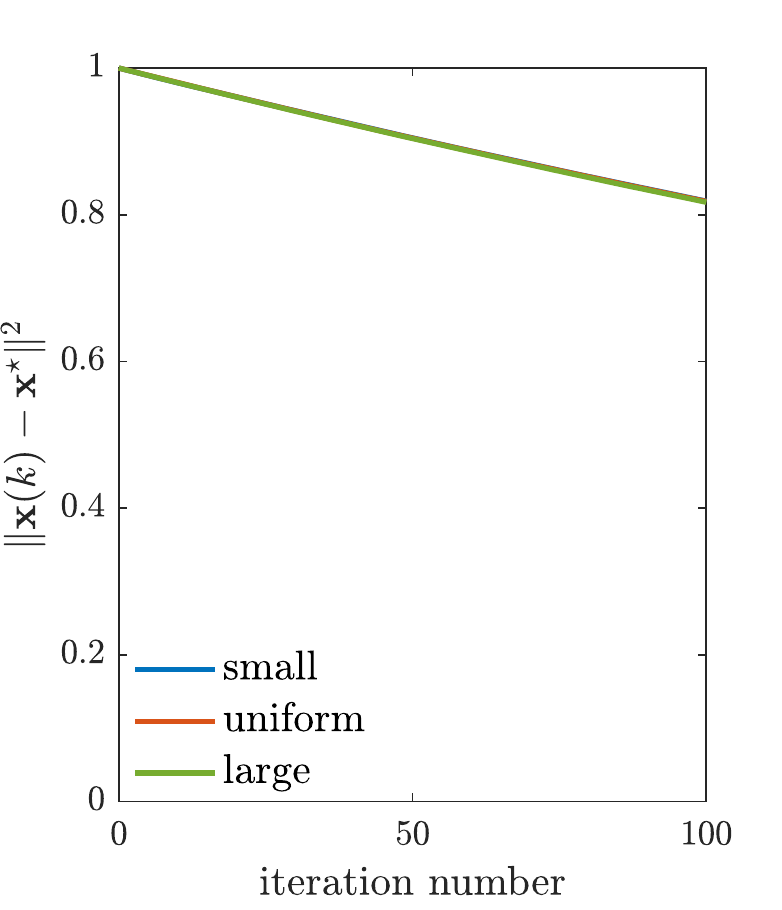}\label{fig:arock}}
	\label{fig:adaptivity}
\vspace{-0.1cm}
\end{figure*}

The expectation in Theorem \ref{thm:adaptbounded} is taken jointly over historical delays and block selection. A remarkable feature of Theorem~\ref{thm:adaptbounded} is that it allows to compute an explicit convergence rate bound for any given delay distribution ${\mathcal P}$. 
The convergence factor $\rho_{\mc{P}}$ is a convex combination of the corresponding quantities for the synchronous (centralized) and bounded-delay models, and the mixing parameter $\alpha_{\mc{P}}$ depends on the delay distribution $\mc{P}$. However, from \eqref{eq:alphavalue}, it is not straightforward to see how qualitative characteristics of the delay distribution (e.g., the mean or the variance) affect $\phi(\rho_c)$, $\phi(\rho_a)$, and $\alpha_{\mc{P}}$. 
As we will show next, such insight can be developed using the concept of stochastic dominance \cite{hadar1969rules}.
%

\textbf{Effect of delay under stochastic dominance}: Suppose that $\mathcal{P}$ and $\mathcal{P}^{'}$ are two probability distributions defined by \eqref{eq:delaydistribution}.
\begin{definition}[stochastic dominance]
We say $\mc{P}$ first-order stochastically dominates $\mc{P}'$ ($\mc{P}\succeq_1\mc{P}'$) if
\begin{equation*}
    \sum_{j=0}^i P_j\ge \sum_{j=0}^i P_j',\quad \forall i\in \{0,\ldots, \bar{\tau}\},
\end{equation*}
i.e., $\mc{P}$ always has a larger or equal cumulative probability.
\end{definition}
The stochastic dominance model compares the proportion of small delays in two delay distributions, which is different but has close connections to the mean-variance model: 
\begin{proposition}\label{rem:stochasticdominance}
Suppose that $\mc{P}\succeq_1\mc{P}'$. Then, the mean value of $\mc{P}$ is smaller than or equal to that of $\mc{P}'$ and if they share the same mean value, then the variance of $\mc{P}$ is smaller than or equal to that of $\mc{P}'$.
\end{proposition}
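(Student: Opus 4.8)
The plan is to reduce both assertions to a single comparison principle: if $\mc{P}\succeq_1\mc{P}'$, then $\mathbb{E}_{\mc{P}}[g(\tau)]\le \mathbb{E}_{\mc{P}'}[g(\tau)]$ for every nondecreasing $g:\{0,\dots,\bar{\tau}\}\to\R$, where $\tau$ denotes a random variable with the indicated distribution. This is the discrete, finitely supported version of the standard characterization of first-order stochastic dominance, and once it is available the proposition follows by specializing $g$. Note the sign convention in effect here: dominance is defined through a pointwise-larger cumulative distribution, so $\mc{P}$ concentrates more mass on small delays, and the inequality therefore points in the direction claimed (a smaller mean for $\mc{P}$).

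To establish the comparison principle I would write $\mathbb{E}_{\mc{P}}[g(\tau)]-\mathbb{E}_{\mc{P}'}[g(\tau)]=\sum_{i=0}^{\bar{\tau}} g(i)(P_i-P_i')$ and apply summation by parts. Setting $S_i=\sum_{j=0}^i (P_j-P_j')$, the dominance hypothesis is exactly $S_i\ge 0$ for all $i$, while the normalization $\sum_i P_i=\sum_i P_i'=1$ gives the boundary condition $S_{\bar{\tau}}=0$. Abel's identity then collapses the difference to $\sum_{i=0}^{\bar{\tau}-1}\bigl(g(i)-g(i+1)\bigr)S_i$, in which every factor $g(i)-g(i+1)\le 0$ by monotonicity and every $S_i\ge 0$ by dominance; hence the whole expression is nonpositive.

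With the comparison principle in hand, the first claim is immediate by taking $g(x)=x$, which is nondecreasing, yielding $\mathbb{E}_{\mc{P}}[\tau]\le\mathbb{E}_{\mc{P}'}[\tau]$. For the second claim I take $g(x)=x^2$, which is also nondecreasing on the nonnegative integers $\{0,\dots,\bar{\tau}\}$, to obtain $\mathbb{E}_{\mc{P}}[\tau^2]\le \mathbb{E}_{\mc{P}'}[\tau^2]$. Under the additional hypothesis that the two distributions share a common mean $\mu$, the variances then satisfy $\mathrm{Var}_{\mc{P}}(\tau)=\mathbb{E}_{\mc{P}}[\tau^2]-\mu^2\le \mathbb{E}_{\mc{P}'}[\tau^2]-\mu^2=\mathrm{Var}_{\mc{P}'}(\tau)$, which is the desired inequality.

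The argument is elementary, so I do not expect a serious analytic obstacle; the step most prone to error is the summation by parts, where one must keep the index shifts and endpoint terms consistent so that $S_i\ge 0$ and $S_{\bar{\tau}}=0$ are applied correctly. It is worth remarking that a careful look at the equal-mean case yields something sharper: since $\mathbb{E}_{\mc{P}'}[\tau]-\mathbb{E}_{\mc{P}}[\tau]=\sum_{i=0}^{\bar{\tau}-1}S_i$ with every $S_i\ge 0$, equal means force $S_i=0$ for all $i$, i.e. $\mc{P}=\mc{P}'$, so the variance inequality in fact holds with equality. I would nonetheless present the monotone-function route above, since it proves both parts uniformly and makes the link to stochastic dominance transparent.
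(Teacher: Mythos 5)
Your proof is correct, and it follows the same conceptual route as the paper: both arguments reduce the proposition to the fact that first-order stochastic dominance orders the expectations of nondecreasing functions, and then specialize to $g(x)=x$ and $g(x)=x^2$ (handling the variance via $\mathrm{Var}=\mathbb{E}[\tau^2]-\mu^2$ under the equal-mean hypothesis). The difference is in how that comparison principle is obtained: the paper simply invokes Theorem~1 of Hadar and Russell three times with different choices of $\phi$, whereas you prove the principle from scratch by Abel summation, using $S_i=\sum_{j=0}^i(P_j-P_j')\ge 0$ and the boundary condition $S_{\bar{\tau}}=0$ to write the difference of expectations as $\sum_{i=0}^{\bar{\tau}-1}\bigl(g(i)-g(i+1)\bigr)S_i\le 0$. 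Your version is self-contained and arguably preferable for a finitely supported distribution, where the summation-by-parts identity is two lines; the paper's version is shorter on the page but leans on an external reference. Your closing observation is also correct and sharper than what the paper records: with $g(x)=x$ the identity gives $\mathbb{E}_{\mc{P}'}[\tau]-\mathbb{E}_{\mc{P}}[\tau]=\sum_{i=0}^{\bar{\tau}-1}S_i$ with every $S_i\ge 0$, so equal means under $\mc{P}\succeq_1\mc{P}'$ force $S_i\equiv 0$ and hence $\mc{P}=\mc{P}'$; the variance clause of the proposition therefore holds only in the degenerate case of identical distributions, a point the paper's proof does not make explicit.
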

\begin{proof}
    These results are straightforward to derive from \cite{hadar1969rules}. For completeness, we provide a simple proof in in Appendix \ref{append:proofofremarkstochasticdominance}.
\end{proof}

Below we show the impact of the delay on the convergence of DEGAS using the stochastic dominance model.
\begin{lemma}\label{lemma:stochasticdominance}
If $\mc{P}\succeq_1\mc{P}'$, then $\rho_{\mc{P}}\le\rho_{\mc{P}'}$.
\end{lemma}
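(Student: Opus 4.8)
The plan is to reduce the comparison of rates to a comparison of the mixing weights $\alpha_{\mc{P}}$ and $\alpha_{\mc{P}'}$, and then to track how $\alpha$ responds to stochastic dominance through the function $\phi$. First observe that neither $\rho_a=(1-\tfrac{1-c^2}{m})^{1/(1+\bar{\tau}/m)}$ nor $\rho_c=1-\tfrac{1-c^2}{m}$ depends on the delay distribution; only $\alpha_{\mc{P}}$ does. Since $c\in(0,1)$ gives $\rho_c\in(0,1)$, and $\bar{\tau}\ge 1$ forces the exponent $1/(1+\bar{\tau}/m)<1$, raising a base in $(0,1)$ to a power strictly less than one yields $\rho_a>\rho_c$. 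Writing $\rho_{\mc{P}}=\rho_c+\alpha_{\mc{P}}(\rho_a-\rho_c)$ then shows $\rho_{\mc{P}}$ is strictly increasing in $\alpha_{\mc{P}}$, so it suffices to prove $\alpha_{\mc{P}}\le\alpha_{\mc{P}'}$.

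Second, I would show that stochastic dominance makes $\phi$ pointwise larger on $(0,1)$. Denote by $\phi_{\mc{P}}$ and $\phi_{\mc{P}'}$ the function $\phi$ built from $\mc{P}$ and $\mc{P}'$. The only distribution-dependent part of $\phi_{\mc{P}}(\rho)$ is $-\tfrac{c^2}{m}\sum_{i=0}^{\bar{\tau}}P_i\rho^{-i}$. For fixed $\rho\in(0,1)$ the weights $h_i:=\rho^{-i}=(1/\rho)^i$ are strictly increasing in $i$. Using $P_i=F_i-F_{i-1}$ with $F_i=\sum_{j\le i}P_j$, $F_{-1}=0$, $F_{\bar{\tau}}=1$, summation by parts gives $\sum_i P_i h_i = h_{\bar{\tau}}+\sum_{i=0}^{\bar{\tau}-1}F_i(h_i-h_{i+1})$, and the identical identity holds for $\mc{P}'$ with CDF $F_i'$. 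Since $h_i-h_{i+1}<0$ and $\mc{P}\succeq_1\mc{P}'$ means $F_i\ge F_i'$, each term satisfies $F_i(h_i-h_{i+1})\le F_i'(h_i-h_{i+1})$, hence $\sum_i P_i\rho^{-i}\le\sum_i P_i'\rho^{-i}$. Multiplying by $-c^2/m<0$ and adding the common terms yields $\phi_{\mc{P}}(\rho)\ge\phi_{\mc{P}'}(\rho)$ for every $\rho\in(0,1)$; in particular $\phi_{\mc{P}}(\rho_a)\ge\phi_{\mc{P}'}(\rho_a)\ge 0$ and $0>\phi_{\mc{P}}(\rho_c)\ge\phi_{\mc{P}'}(\rho_c)$, where the sign conditions come from Theorem~\ref{thm:adaptbounded} applied to each distribution.

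Third, I would feed these inequalities into the closed form for $\alpha$. Rewriting $\alpha_{\mc{P}}=\bigl(1-\phi_{\mc{P}}(\rho_a)/\phi_{\mc{P}}(\rho_c)\bigr)^{-1}=\frac{|\phi_{\mc{P}}(\rho_c)|}{\phi_{\mc{P}}(\rho_a)+|\phi_{\mc{P}}(\rho_c)|}$ exhibits $\alpha_{\mc{P}}$ as the value of the map $(a,b)\mapsto \frac{b}{a+b}$ at $a=\phi_{\mc{P}}(\rho_a)\ge 0$ and $b=|\phi_{\mc{P}}(\rho_c)|>0$. This map is nonincreasing in $a$ and nondecreasing in $b$ on $\{a\ge0,\,b>0\}$. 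Passing from $\mc{P}'$ to $\mc{P}$ increases $a$ (since $\phi_{\mc{P}}(\rho_a)\ge\phi_{\mc{P}'}(\rho_a)$) and decreases $b$ (since $|\phi_{\mc{P}}(\rho_c)|\le|\phi_{\mc{P}'}(\rho_c)|$); both moves lower the value, so $\alpha_{\mc{P}}\le\alpha_{\mc{P}'}$, and therefore $\rho_{\mc{P}}\le\rho_{\mc{P}'}$.

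The routine parts are the summation-by-parts identity and the elementary monotonicity of $(a,b)\mapsto b/(a+b)$. The step that needs the most care — and where a sign slip is easiest — is the bookkeeping in the last paragraph: because $\phi(\rho_c)<0$, the statement ``stochastically smaller delays make $\phi$ larger'' translates into $|\phi(\rho_c)|$ getting \emph{smaller}, and one must verify that the simultaneous increase of $a$ and decrease of $b$ push $\alpha$ in the \emph{same} direction rather than competing. Confirming that all intermediate quantities retain the signs required by the formula (namely $b>0$, so $a+b>0$ and no division by zero) then closes the argument.
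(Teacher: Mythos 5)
Your proof is correct and follows essentially the same route as the paper's: establish $\phi_{\mc{P}}(\rho)\ge\phi_{\mc{P}'}(\rho)$ pointwise from stochastic dominance, use the sign facts $\phi(\rho_a)\ge 0>\phi(\rho_c)$ from Theorem~\ref{thm:adaptbounded} to conclude $\alpha_{\mc{P}}\le\alpha_{\mc{P}'}$, and finish via $\rho_a\ge\rho_c$. The only difference is cosmetic: you prove the key inequality $\sum_i P_i\rho^{-i}\le\sum_i P_i'\rho^{-i}$ self-containedly by Abel summation against the CDFs, where the paper invokes Theorem~1 of \cite{hadar1969rules} with the increasing test function $x\mapsto\rho^{-x}$.
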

\begin{proof}
See Appendix \ref{append:proofoflemmastochasticdominance}.
\end{proof}
By Lemma \ref{lemma:stochasticdominance}, for a given delay bound, a larger proportion of small delays yields faster convergence of DEGAS.

\textbf{Demonstration with a simple operator:} To demonstrate the self-adaptivity of DEGAS to actual delays, we consider the simple operator $\T(\bx)=0.8\bx$ and three stochastic delay models. Note that when the delays are generated by stochastic models, the number of workers does not affect the update \eqref{eq:Alg1formal} or the convergence of DEGAS in terms of iteration index. We choose $m=20$, $\bar{\tau}=20$, and each block $x_i\in\mathbb{R}$. The stochastic models are: For any $i\in [0,\bar{\tau}]$, 1) \textbf{small}: $P_i=\frac{(\bar{\tau}+1-i)^2}{\sum_{j=1}^{\bar{\tau}+1}j^2}$;  \textbf{uniform}: $P_i=\frac{1}{\bar{\tau}+1}$;  \textbf{large}: $P_i=\frac{(i+1)^2}{\sum_{j=1}^{\bar{\tau}+1}j^2}$. If the generated value is larger than $k$, we set $\tau(k)=k$ to ensure $\tau(k)\in [0,k]$  We also run ARock with the same operator for comparison. For ARock \eqref{eq:ARock}, we fine tune $\gamma(k)$ in its theoretical range $(0,\frac{1}{2\bar{\tau}/\sqrt{m}+1})$ in \cite{Peng16} which is broader than that in \cite{Feyzmahdavian21} in the experiment setting, and draw blocks $i$ to update uniformly at random.

We execute $2000$ runs of both algorithms, each for $100$ iterations, and plot the average result. We also plot the rate bounds in Theorems \ref{thm:boundedgeneral}--\ref{thm:adaptbounded} to test their tightness. The result is shown in Figure \ref{fig:adaptivity}. We can see that DEGAS significantly converges slower when the proportion of small delays decreases (small $\rightarrow$ uniform $\rightarrow$ large), indicating the excellent delay adaptivity of DEGAS. Moreover, the rate bound in Theorem \ref{thm:adaptbounded} is very tight for the simulated scenarios and can reflect the effect of the delay on the convergence. The gap between the bounds in Theorems \ref{thm:boundedgeneral} is not as tight as that in Theorem \ref{thm:adaptbounded} but is still tight. In contrast to DEGAS, no clear difference on the convergence speed of ARock under the three delay patterns can be observed due to its small step-size caused by the large $\bar{\tau}$. In addition, for all the three delay models, DEGAS is much faster than ARock. In passing, we note that even {\bf small} delay model is much less extreme than the actual delays reported in 
\cite{mishchenko2022asynchronous}

\subsubsection{Convergence on unbounded delay}

We also study DEGAS on unbounded delays and consider the total asynchrony model \cite{bertsekas2003parallel}.
\begin{assumption}[Total asynchrony]\label{asm:parallelasynchrony}
	The delay sequence $\{\tau(k)\}$ satisfies $\lim_{k\rightarrow +\infty} k-\tau(k) = +\infty$.
\end{assumption}
Assumption \ref{asm:parallelasynchrony} is very general and guarantees that old information must eventually be purged from the system.

\begin{theorem}\label{thm:general}
    Suppose that Assumptions \ref{asm:blockdelayindependent},\ref{asm:parallelasynchrony} hold and $\bx^\star\in Fix\T$. Let $\{\bx(k)\}$ be generated by DEGAS. If $\T$ is averaged, then $\lim_{k \rightarrow +\infty}\inf_{t\le k} \mathbb{E}\left[\|(\Id-\T)(\mb{x}(t))\|\right] = 0.$
\end{theorem}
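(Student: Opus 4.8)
The plan is to combine the standard descent inequality for averaged operators with a Bertsekas--Tsitsiklis-style nested-region argument that exploits the total asynchrony condition $s(k):=k-\tau(k)\to+\infty$ in place of any delay bound. Throughout I would condition on a fixed realization of the delay sequence $\{\tau(k)\}$ satisfying Assumption~\ref{asm:parallelasynchrony}, so that each read time $s(k)$ is deterministic, and take expectation only over the block selections. Assumption~\ref{asm:blockdelayindependent} is what legitimizes this conditioning: it guarantees that fixing the delays leaves the blocks $i(k)$ uniform and independent, so the delayed quantities $\|\bx(s(k))-\bx^\star\|^2$ and $\|(\Id-\T)(\bx(s(k)))\|^2$ are $\mc{F}_k$-measurable and their expectations factor cleanly.

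First I would derive a one-step inequality. Writing $e_k=\|\bx(k)-\bx^\star\|^2$ and $r_k=\|(\Id-\T)(\bx(k))\|^2$, the update \eqref{eq:Alg1formal} changes only block $i(k)$, so conditioning on $\mc{F}_k$ and averaging over the uniform choice $i(k)\in[m]$ gives
\begin{equation*}
\mathbb{E}[e_{k+1}\mid\mc{F}_k]=\frac{m-1}{m}e_k+\frac1m\|\T(\bx(s(k)))-\bx^\star\|^2 .
\end{equation*}
Since $\T$ is $\alpha$-averaged, the textbook bound $\|\T(\bx)-\bx^\star\|^2\le\|\bx-\bx^\star\|^2-\tfrac{1-\alpha}{\alpha}\|(\Id-\T)(\bx)\|^2$ (valid because $\bx^\star\in\mbox{Fix}\,\T$) turns this into
\begin{equation*}
\mathbb{E}[e_{k+1}\mid\mc{F}_k]\le\frac{m-1}{m}e_k+\frac1m e_{s(k)}-\frac{\beta}{m}r_{s(k)},\qquad \beta:=\tfrac{1-\alpha}{\alpha}>0 .
\end{equation*}
Taking full expectation and writing $E_k=\mathbb{E}[e_k]$, $R_k=\mathbb{E}[r_k]$ yields $E_{k+1}\le\frac{m-1}{m}E_k+\frac1m E_{s(k)}-\frac{\beta}{m}R_{s(k)}$.

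Next I would record two consequences. Dropping the nonpositive residual term shows $E_{k+1}$ is a convex combination of $E_k$ and $E_{s(k)}$ with $s(k)\le k$, so an easy induction gives $\max_{t\le k}E_t=E_0$ and in particular $0\le E_k\le E_0$ for all $k$. The crux is then an argument by contradiction: suppose $\inf_t R_t=\delta>0$, whence $R_{s(k)}\ge\delta$ for every $k$ and $E_{k+1}\le\frac{m-1}{m}E_k+\frac1m E_{s(k)}-\frac{\beta\delta}{m}$. I would prove by induction on $n$ that there is an index $K_n$ with $E_k\le E_0-\frac{n\beta\delta}{m}$ for all $k\ge K_n$: given the bound at level $n$, Assumption~\ref{asm:parallelasynchrony} supplies $K_{n+1}\ge K_n$ such that $s(k)\ge K_n$ for all $k\ge K_{n+1}$, so that both $E_k$ and $E_{s(k)}$ are at most $E_0-\frac{n\beta\delta}{m}$, and the extra $-\frac{\beta\delta}{m}$ promotes the bound to level $n+1$. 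Choosing $n>mE_0/(\beta\delta)$ forces $E_k<0$ for large $k$, contradicting $E_k\ge0$. Hence $\inf_t R_t=0$, and by Jensen's inequality $\mathbb{E}[\|(\Id-\T)(\bx(t))\|]\le\sqrt{R_t}$, so $\inf_t\mathbb{E}[\|(\Id-\T)(\bx(t))\|]=0$; since $\inf_{t\le k}\mathbb{E}[\|(\Id-\T)(\bx(t))\|]$ decreases monotonically to this infimum, the claimed limit is $0$.

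The main obstacle is exactly the delayed term $E_{s(k)}$: with unbounded delays one cannot telescope the one-step inequality, because a single iterate may be read an unbounded number of times and the usual multiplicity bound (which makes the bounded-delay proof of Theorem~\ref{thm:boundedgeneral} work) fails. The nested-region induction sidesteps summation entirely and uses only the qualitative fact that stale information is eventually flushed, i.e. $s(k)\to\infty$, which is precisely the content of Assumption~\ref{asm:parallelasynchrony}. A secondary technical point I would state carefully is the conditioning/independence step that replaces $\mathbb{E}[e_{s(k)}]$ and $\mathbb{E}[r_{s(k)}]$ by $E_{s(k)}$ and $R_{s(k)}$; this is the only place Assumption~\ref{asm:blockdelayindependent} enters.
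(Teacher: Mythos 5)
Your proposal is correct. The one-step inequality you derive is exactly the paper's starting point (its equation \eqref{eq:oldboundedLyapunovfirm}, obtained from \eqref{eq:nonincreasingdis} and Proposition 4.25 of \cite{bauschke2011convex}), and your handling of Assumption~\ref{asm:blockdelayindependent} --- conditioning on the delay realization and averaging only over the uniform block choice --- matches how the paper justifies taking expectations. Where you diverge is in how the total-asynchrony condition is converted into a conclusion. The paper defines an explicit epoch sequence $\mc{I}(t)$ (the times after which all reads are at least as recent as the previous epoch), shows that the per-epoch maximum $V(a(t))$ is nonincreasing with decrement $W(a(t)-1-\tau(a(t)-1))$, and telescopes to obtain $\sum_{t}W(a(t)-1-\tau(a(t)-1))\le V(0)$, from which the vanishing of the residual along a subsequence follows. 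You instead argue by contradiction: a uniform lower bound $\delta>0$ on the residual, fed through the same nested-window structure (each application of Assumption~\ref{asm:parallelasynchrony} supplying a new threshold $K_{n+1}$ beyond which all reads postdate $K_n$), forces $E_k$ to drop by a fixed amount $\beta\delta/m$ per level and eventually become negative. Both arguments exploit the flushing times in the same way, but the paper's version is constructive and yields a summability statement that it reuses (with the same $\mc{I}(t)$ machinery) to get explicit rates under sublinearly growing delays in Theorem~\ref{thm:unbounded}, whereas your contradiction argument is somewhat lighter on bookkeeping --- no $\operatorname{arg\,max}$ indices, no monotonicity lemma for epoch maxima --- at the cost of producing only the qualitative conclusion $\inf_t R_t=0$, which is all this particular theorem asserts. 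One small point worth writing out explicitly if you formalize this: in the inductive step the threshold for level $n+1$ should be taken as $\max(K_n,K_{n+1})+1$ so that both $E_k$ and $E_{s(k)}$ enjoy the level-$n$ bound before you add the extra decrement.
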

\begin{proof}
See Appendix \ref{appendix:proofthmgeneral}.
\end{proof}

The expectation in Theorem \ref{thm:general} is taken over historical block selections. Under Assumption \ref{asm:parallelasynchrony}, it's impractical to derive explicit convergence rates due to the lack of bounds on the growing speed of delays. 

Below, we derive explicit convergence rates for DEGAS under the following delay model which satisfies Assumption \ref{asm:parallelasynchrony} but has a sublinearly or linearly growing delay bound.

\begin{assumption}[sublinear \& linear delay]\label{asm:sublineardelay}
    There exist $\eta \in(0,1)$, $\beta\in (0,1]$, and $\gamma\ge 1$ such that $\tau(k)\le \eta k^\beta+\gamma$ $\forall k\in\N_0$.
\end{assumption}

\begin{theorem}[sublinear convergence]\label{thm:unbounded}
    Suppose that Assumption \ref{asm:sublineardelay} holds. Let $\{\bx(k)\}$ be generated by DEGAS. If $\T$ is pseudo-contractive with modulus $c\in(0,1)$, then
    \begin{equation*}
        \mathbb{E}[\|\mathbf{x}(k)-\mathbf{x}^\star\|^2] = \begin{cases}
        O\left(\left(1-\frac{1-c^2}{m }\right)^{k^{1-\beta}}\right), & \beta\in(0,1),\\
        O(1/k), & \beta=1.
        \end{cases}
    \end{equation*}
\end{theorem}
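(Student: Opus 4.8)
The plan is to reduce the statement to a purely deterministic recursion for the sequence $V(k):=\mathbb{E}[\|\bx(k)-\bx^\star\|^2]$, where the expectation is over the uniform block selections and the delay sequence is treated as an arbitrary (adversarial) sequence satisfying Assumption~\ref{asm:sublineardelay}. Fixing a fixed point $\bx^\star$ and conditioning on the realized delays, I would average the one–step update \eqref{eq:Alg1formal} over $i(k)$, which the algorithm draws uniformly and independently of the past: the updated coordinate contributes $\frac{1}{m}\sum_i\|\T_i(\bx(k-\tau(k)))-x_i^\star\|^2=\frac{1}{m}\|\T(\bx(k-\tau(k)))-\bx^\star\|^2$, to which pseudo-contractivity applies, while the untouched coordinates contribute $\frac{m-1}{m}\|\bx(k)-\bx^\star\|^2$. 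This yields the key inequality
\begin{equation*}
V(k+1)\le a\,V(k)+b\,V(k-\tau(k)),\qquad a=\tfrac{m-1}{m},\ b=\tfrac{c^2}{m},
\end{equation*}
with $a+b=\rho_c:=1-\frac{1-c^2}{m}\in(0,1)$. This is exactly the recursion underlying Theorem~\ref{thm:boundedgeneral}, now with the \emph{growing} look-back bound $\tau(k)\le d(k):=\eta k^\beta+\gamma$.

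The second step is a sequence lemma: any nonnegative $\{V(k)\}$ obeying the above with $\tau(k)\le d(k)$ satisfies $V(k)\le C\,W(k)$ once we exhibit a positive, nonincreasing \emph{super-solution} $W$ with $W(k+1)\ge a\,W(k)+b\,W(k-d(k))$ for all large $k$. Monotonicity of $W$ lets us replace the unknown $\tau(k)$ by its worst case $d(k)$, and strong induction (with $C$ chosen to dominate the finitely many transient terms, whose look-backs can only reach earlier indices) propagates the bound. The whole problem thus becomes the construction and verification of $W$.

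For $\beta\in(0,1)$ I would take $W(k)=\rho_c^{\psi(k)}$ with $\psi(k)=\kappa\sum_{s=0}^{k-1}\frac{1}{m+d(s)}$. Dividing the super-solution condition by $W(k)$ reduces it to $\rho_c^{\psi(k+1)-\psi(k)}\ge a+b\,\rho_c^{-(\psi(k)-\psi(k-d(k)))}$. The increment $\psi(k+1)-\psi(k)\to0$, and the crucial observation is that $\psi(k)-\psi(k-d(k))=\kappa\sum_{s=k-d(k)}^{k-1}\frac{1}{m+d(s)}$ stays \emph{bounded} (a Riemann-sum estimate gives a finite limit), so the condition collapses to $1\ge a+b\,\rho_c^{-(\mathrm{const})}$, equivalently $\rho_c^{\,\mathrm{const}}\ge c^2$, which holds for suitable $\kappa$ by an inequality of the type $(1-u)e^{u}<1$. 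Since $\sum_{s=0}^{k-1}\frac{1}{m+d(s)}=\Theta(k^{1-\beta})$ with leading coefficient exceeding $1$ (the slack $\eta<1$, $1-\beta<1$ guarantees this), one gets $V(k)=O(\rho_c^{\,k^{1-\beta}})$. For $\beta=1$ I would instead use $W(k)=1/(k+k_0)$ with $k_0=\gamma/\eta$, so that $k-\tau(k)\ge k-d(k)=(1-\eta)(k+k_0)$ keeps the look-back index a \emph{constant fraction} of $k$; the super-solution condition then reduces to $\frac{k+k_0}{k+1+k_0}\ge a+\frac{b}{1-\eta}$.

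The main obstacle is this last verification. For $\beta<1$ it is the finite-$k$ control of $\sum_{s=k-d(k)}^{k-1}\frac{1}{m+d(s)}$: since $d$ is neither constant nor smooth, one must bound its deviation from the limiting constant uniformly in $k$ so as to retain strict slack in $1\ge a+b\,\rho_c^{-(\cdot)}$. For $\beta=1$ the difficulty is sharper: the $O(1/k)$ rate is essentially tight, and the reduced condition $a+\frac{b}{1-\eta}<1$ amounts to $\eta<1-c^2$. Obtaining the clean $1/k$ bound over the \emph{entire} range $\eta\in(0,1)$ is therefore where the argument must be pushed hardest — the tension is between the per-iteration contraction $\rho_c$ and the linearly growing look-back, and the constants $a,b,\eta$ must be balanced precisely, with $\eta<1$ being exactly what prevents the look-back index from escaping a fixed fraction of the current index.
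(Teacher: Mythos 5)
Your starting point --- averaging over the uniformly sampled block and applying pseudo-contractivity to get $V(k+1)\le\left(1-\tfrac{1}{m}\right)V(k)+\tfrac{c^2}{m}V(k-\tau(k))$ --- is exactly the paper's inequality \eqref{eq:boundedcontractionLyap}, but from there you take a genuinely different route. The paper does not construct a super-solution: it reuses the epoch sequence $\mc{I}(t)$ from \eqref{eq:defIt} (the times beyond which every iterate depends only on iterates from the previous epoch), shows that $\max_{\ell\in[\mc{I}(t),\mc{I}(t+1))}V(\ell)\le\rho_c\max_{\ell\in[\mc{I}(t-1),\mc{I}(t))}V(\ell)$ with $\rho_c=1-\tfrac{1-c^2}{m}$, and then converts the growth bound $\mc{I}(t+1)\le\mc{I}(t)+O(\mc{I}(t)^\beta)$ into a lower bound on the number of completed epochs, $t=\Theta(k^{1-\beta})$ for $\beta<1$ and $t=\Theta(\ln k)$ for $\beta=1$. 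Your $\beta\in(0,1)$ branch is a viable alternative: the limiting condition $1\ge a+b\,\rho_c^{-\kappa}$ is precisely $\rho_c^{\kappa}\ge c^2$, which Bernoulli's inequality supplies for $\kappa=m$ (with strict slack when $m\ge 2$; take $\kappa$ slightly below $1$ when $m=1$), the finite-$k$ perturbations you worry about are $O(k^{-\beta})$ on the left and $O(k^{\beta-1})$ in the exponent on the right and are absorbed by that slack, and $\psi(k)\sim\frac{\kappa}{\eta(1-\beta)}k^{1-\beta}$ with $\eta(1-\beta)<1$ gives the stated exponent. If anything your exponent constant is sharper than the paper's, whose argument actually yields $\rho_c^{\,k^{1-\beta}/a}$ with $a=\eta(1-\eta)^{-\beta}+\gamma+1>1$.

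The genuine gap is the one you flagged at $\beta=1$: with $W(k)=1/(k+k_0)$ the verification requires $a+\tfrac{b}{1-\eta}<1$, i.e.\ $\eta<1-c^2$, so your argument does not cover $\eta\in[1-c^2,1)$ while the theorem asserts $O(1/k)$ for every $\eta\in(0,1)$. You should know, however, that the paper's own proof does not close this range either: its epoch count gives $t\ge\frac{\ln(\eta(k+1)/(\gamma+1))}{\ln(1/(1-\eta))}$, hence $V(k)\le\rho_c^{\,t}V(0)=O\!\left(k^{-s}\right)$ with $s=\frac{\ln(1/\rho_c)}{\ln(1/(1-\eta))}$, and $s\ge 1$ only when $\eta\le\frac{1-c^2}{m}$ --- a strictly smaller range than yours. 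Running your comparison argument with $W(k)=(k+k_0)^{-s}$ instead of $1/(k+k_0)$ requires only $(1-\eta)^{-s}<1/c^2$, i.e.\ $s<\frac{\ln(1/c^2)}{\ln(1/(1-\eta))}$, which matches or exceeds the exponent the paper actually establishes (since $\rho_c\ge c^2$). So the obstruction you identified is real, but it reflects looseness in the stated $O(1/k)$ rather than a missing idea in your approach; to reproduce what the paper's proof actually delivers for $\beta=1$, downgrade your super-solution to a general power $(k+k_0)^{-s}$ and the argument goes through for all $\eta\in(0,1)$.
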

\begin{proof}
    See Appendix \ref{sec:convanasublineargrow}.
\end{proof}
Like Theorems \ref{thm:boundedgeneral}, \ref{thm:general}, the expectation in Theorem \ref{thm:unbounded} is taken over historical block selections. Theorems \ref{thm:boundedgeneral}, \ref{thm:unbounded} display how delays affect the order of the convergence rate of DEGAS. Such a relationship is summarized in a more clear way in Table \ref{tab:relation}. Overall speaking, faster growing speed of delay leads to slower convergence, which coincides with intuition.

\begin{table}[!htb]
    \centering
    \begin{tabular}{c|c|c|c}
        \hline
        delay bound & $\bar{\tau}$ & $O(k^\beta)$, $\beta\in (0,1)$ & $O(k)$\\
        \hline
        rate & linear & $O(\rho_c^{k^{1-\beta}})$ & $O(1/k)$\\
        \hline
    \end{tabular}
    \caption{asynchrony and convergence rate ($\rho_c = 1-\frac{1-c^2}{m}$).}    \label{tab:relation}
\end{table}

\citet{wu2022optimal} derive the same order of results as Theorem \ref{thm:unbounded} for the asynchronous BCD specialized from ARock and prove that they are optimal in terms of the convergence rate order. \citet{hannah2018unbounded} show that ARock converges under certain stochastic and deterministic unbounded delay models. However, when considering deterministic delay models, neither of them guarantees the convergence of ARock under the more general total asynchrony assumption. Moreover, they both require carefully designed delay-dependent step-sizes to guarantee convergence, while DEGAS can converge with delay-free parameters.

\section{Applications}
By concretizing the operator $\T$ in DEGAS, we obtain novel and efficient asynchronous variants of BCD and ADMM.

\subsection{Delay-agnostic asynchronous BCD}\label{ssec:DTBCD}

BCD \cite{richtarik2014iteration} solves the composite optimization problem
\begin{equation}\label{eq:composite}
    \underset{\bx\in\mathbb{R}^d}{\operatorname{minimize}}~f(\bx)+\sum_{i=1}^m r_i(x_i),
\end{equation}
where $f:\mathbb{R}^d\rightarrow \mathbb{R}$ is convex and $L$-smooth,
$x_i\in\mathbb{R}^{d_i}$ is the $i$th block of $\bx$, \emph{i.e.}, $\bx=(x_1, \ldots, x_m)$, and each $r_i:\mathbb{R}^{d_i}\rightarrow \mathbb{R}\cup\{+\infty\}$ is closed and convex. At each iteration $k$, BCD chooses one block $i\in [m]$ and updates
\begin{equation}\label{eq:cpgd}
    x_i(k+1) = \prox_{\gamma r_i}(x_i(k) - \gamma \nabla_i f(\bx(k))),
\end{equation}
while $x_j(k+1)=x_j(k)$ for all $j\neq i$. Here, $\gamma>0$ is a step-size and $\nabla_i f(\cdot)$ is the partial gradient of $f$ with respect to $x_i$. This is equivalent to the coordinate update \eqref{eq:coordinateupdate} with \begin{equation}\label{eq:operatorcpgd}
    \T_i(\bx)= \prox_{\gamma r_i}(x_i - \gamma \nabla_i f(\bx)),~\forall i\in[m].
\end{equation}
We refer to DEGAS with $\T_i$ defined in \eqref{eq:operatorcpgd} as delay-agnostic asynchronous BCD. Compared to the existing asynchronous BCD \cite{Sun17,cheung2021fully} that specialized from ARock, the delay-agnostic asynchronous BCD enjoys the same advantages of DEGAS over ARock, i.e., delay-free parameters and nice convergence properties. 

If the optimal solution set of \eqref{eq:composite} is non-empty, so is $Fix \T$ where $\T=(\T_1,\ldots,\T_m)$ with each $\T_i$ given by \eqref{eq:operatorcpgd}, and every $\bx^\star\in Fix\T$ is an optimal solution of problem \eqref{eq:composite} \cite{bauschke2011convex}. Under proper conditions, $\T$ is averaged and pseudo-contractive, 
which implies convergence of delay-agnostic asynchronous BCD by the results in \S~\ref{ssec:convana}.

\begin{lemma}
Suppose that $\gamma\in(0,2/L)$.  The operator $\T$ defined in \eqref{eq:operatorcpgd} is $\alpha$-averaged with $\alpha=\frac{1}{\min(1,1/(L\gamma))+1/2}\in(0,1)$. If, in addition,
$f$ is $\mu$-strongly convex for some $\mu\in (0,L]$, then $\T$ is pseudo-contractive with modulus
$c=\sqrt{1-2\gamma\mu+\gamma^2\mu L}$.
\end{lemma}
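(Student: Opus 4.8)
The plan is to exploit the fact that the full operator $\T=(\T_1,\ldots,\T_m)$ is exactly the forward--backward operator $\T=\prox_{\gamma r}\circ(\Id-\gamma\nabla f)$ for the separable regularizer $r(\bx)=\sum_{i=1}^m r_i(x_i)$: because $r$ is block-separable, $\prox_{\gamma r}$ acts blockwise and reproduces the coordinate form \eqref{eq:operatorcpgd}, so it suffices to analyze this composition. For the averagedness claim I would record two standard facts. Since each $r_i$ is closed and convex, so is $r$, and $\prox_{\gamma r}$ is firmly nonexpansive, i.e.\ $\tfrac12$-averaged; and since $f$ is convex and $L$-smooth, the Baillon--Haddad theorem gives that $\nabla f$ is $\tfrac1L$-cocoercive, whence for $\gamma\in(0,2/L)$ the forward step $\Id-\gamma\nabla f$ is $\tfrac{L\gamma}{2}$-averaged. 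Relaxing the smaller of the two constants upward (an $\alpha$-averaged operator is $\beta$-averaged for every $\beta\in[\alpha,1)$), both factors are $\beta$-averaged with $\beta=\max\{\tfrac12,\tfrac{L\gamma}{2}\}=\tfrac12\max\{1,L\gamma\}$. Invoking the standard rule that the composition of two $\beta$-averaged operators is $\tfrac{2\beta}{1+\beta}$-averaged and substituting $\tfrac1\beta=2\min\{1,1/(L\gamma)\}$ yields $\alpha=\tfrac{2\beta}{1+\beta}=\tfrac{1}{\min\{1,1/(L\gamma)\}+1/2}$, which lies in $(0,1)$ since $\beta\in[\tfrac12,1)$ for $\gamma\in(0,2/L)$.

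For the pseudo-contraction claim, fix $\bx^\star\in\operatorname{Fix}\T$, so $\bx^\star=\prox_{\gamma r}((\Id-\gamma\nabla f)\bx^\star)$. Using that $\prox_{\gamma r}$ is nonexpansive, $\|\T\bx-\bx^\star\|=\|\T\bx-\T\bx^\star\|\le\|(\Id-\gamma\nabla f)\bx-(\Id-\gamma\nabla f)\bx^\star\|$, so it remains to contract the forward step. Writing $\mathbf{u}=\bx-\bx^\star$ and $\mathbf{v}=\nabla f(\bx)-\nabla f(\bx^\star)$, I expand $\|\mathbf{u}-\gamma\mathbf{v}\|^2=\|\mathbf{u}\|^2-2\gamma\langle\mathbf{u},\mathbf{v}\rangle+\gamma^2\|\mathbf{v}\|^2$. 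Applying cocoercivity $\|\mathbf{v}\|^2\le L\langle\mathbf{u},\mathbf{v}\rangle$ to the last term collects everything into $\|\mathbf{u}\|^2+\gamma(\gamma L-2)\langle\mathbf{u},\mathbf{v}\rangle$; since $\gamma L-2<0$, I then use $\mu$-strong convexity in the form $\langle\mathbf{u},\mathbf{v}\rangle\ge\mu\|\mathbf{u}\|^2$ (noting the multiplying coefficient is negative, so the lower bound becomes an upper bound on the expression) to obtain $\|\T\bx-\bx^\star\|^2\le(1-2\gamma\mu+\gamma^2\mu L)\|\mathbf{u}\|^2$, i.e.\ $c=\sqrt{1-2\gamma\mu+\gamma^2\mu L}$. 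That $c\in(0,1)$ follows from $c^2=1-\gamma\mu(2-\gamma L)<1$ and $c^2\ge(1-\gamma\mu)^2\ge0$ (using $\mu\le L$).

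The routine expansions are harmless; the care lies entirely in the bookkeeping of constants and signs. In the averagedness part the subtlety is the choice of composition bound: the crude ``max'' relaxation reproduces exactly the stated $\min\{1,1/(L\gamma)\}$ form, whereas the tighter Ogura--Yamada constant would give a smaller but messier value, so the task is to confirm this is the intended route. In the pseudo-contraction part the main obstacle is the ordering of the two inequalities — cocoercivity must be applied \emph{before} strong convexity, and because the coefficient $\gamma(\gamma L-2)$ is negative the strong-convexity lower bound must be used to upper bound the whole quantity, which is exactly what makes $\gamma<2/L$ indispensable.
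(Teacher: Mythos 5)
Your proof is correct and follows essentially the same route as the paper, whose proof simply cites Theorem~25.8 of Bauschke--Combettes for the averagedness constant and Proposition~5 of the ARock paper for the pseudo-contraction modulus; your two paragraphs are exactly the standard arguments behind those two citations (firm nonexpansiveness of $\prox_{\gamma r}$ plus Baillon--Haddad and the $\tfrac{2\beta}{1+\beta}$ composition rule for the first claim, and the cocoercivity-then-strong-convexity expansion of the forward step for the second). The only cosmetic point is the edge case $\gamma\mu=1$, $L=\mu$, where $c=0$ falls outside the open interval $(0,1)$ required by Definition~2, but this is harmless since the contraction inequality then holds a fortiori for any $c\in(0,1)$.
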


\begin{proof}
The claim follows by Theorem 25.8 in \cite{bauschke2011convex} and Proposition 5 in \cite{Peng16}.
\end{proof}
Under the uniform random block selection rule and bounded delays, \cite{Sun17,cheung2021fully} establish, for the asynchronous BCD specialized from ARock, convergence rates of the same order as Theorem \ref{thm:boundedgeneral}. Moreover, neither of them requires Assumption \ref{asm:blockdelayindependent}, and \citet{Sun17} also consider stochastic and deterministic unbounded delays and deterministic block selection rules. However, compared to our delay-agnostic asynchronous BCD, the asynchronous BCD \cite{Sun17,cheung2021fully} inherits the disadvantages of ARock over DEGAS, \emph{i.e.}, delay-dependent step-sizes and the resulting slow convergence. Moreover, none of \cite{Sun17,cheung2021fully} provides convergence results under either of Assumptions \ref{asm:parallelasynchrony}--\ref{asm:sublineardelay}.


\subsection{Delay-agnostic asynchronous ADMM}
Consider the consensus optimization problem:
\begin{equation}\label{eq:consensusprob}
\begin{split}
    \underset{z\in\mathbb{R}^d}{\operatorname{minimize}}~&~\sum_{i=1}^m F_i(z),\\
\end{split}
\end{equation}
where each $F_i$ is convex and closed. Problem \eqref{eq:consensusprob} formulates some popular problems such as empirical risk minimization in machine learning \cite{boyd2011distributed} and, by letting $\bz=(z_1,\ldots,z_m)$, $F(\bz)=\sum_{i=1}^m F_i(z_i)$, and $\mc{C}=\{\bz:z_1 = \ldots=z_m\}$, it can be rewritten as
\begin{equation}\label{eq:ADMMprob}
    \underset{\bz\in\mathbb{R}^{md}}{\operatorname{minimize}}~F(\bz)+\mc{I}_{\mc{C}}(\bz),
\end{equation}
where $\mc{I}_{\mc{C}}$ is the indicator function of $\mc{C}$. One popular way of solving \eqref{eq:ADMMprob} is to use the update \eqref{eq:fpupdate} with $\T$ being the Douglas-Rachford splitting of $\partial \mc{I}_{\mc{C}}$ and $\partial F$ \cite{bauschke2011convex}, i.e.,
\begin{equation}\label{eq:ADMMoperator}
    \T=\Id+\lambda(\prox_{\gamma F}\circ (2\prox_{\gamma \mc{I}_{\mc{C}}}-\Id)-\prox_{\gamma \mc{I}_{\mc{C}}}),
\end{equation}
where $\lambda\in(0,2)$. If the optimal solution set of \eqref{eq:ADMMprob} is non-empty, so is $Fix \T$ and, for any $\bx^\star\in Fix \T$, $\prox_{\gamma \mc{I}_{\mc{C}}}(\mathbf{x}^\star)$ is an optimal solution of problem \eqref{eq:ADMMprob} \cite{bauschke2011convex}. We refer to DEGAS with $\T$ in \eqref{eq:ADMMoperator} as delay-agnostic asynchronous ADMM because its synchronous counterpart with $\lambda=1$ is equivalent to ADMM (see Appendix \ref{sec:ADMMequivalence}).

The delay-agnostic asynchronous ADMM can be asynchronously implemented as Algorithm \ref{alg:Coor}, where $\T_i(\bx^w)$ in step 6 can be computed by
\begin{align}
    & z_i = \frac{1}{m}\sum_{i=1}^m x_i^w,\label{eq:TiADMMz}\allowdisplaybreaks\\
    & \T_i(\bx^w) = x_i^w +\lambda(\prox_{\gamma F_i} (2z_i-x_i^w)-z_i).\label{eq:TiADMMx}
\end{align}

Below we show $\T$ in \eqref{eq:ADMMoperator} is an averaged operator under proper conditions, so that by the results in \S~\ref{ssec:convana} the delay-agnostic asynchronous ADMM converges under both bounded and unbounded delays.
\begin{lemma}\label{lemma:ADMMlemma}
The operator $\T$ in \eqref{eq:ADMMoperator} is $\lambda/2$-averaged.
\end{lemma}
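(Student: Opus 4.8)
The plan is to rewrite $\T$ in the canonical (relaxed) Douglas--Rachford form $(1-\tfrac{\lambda}{2})\Id + \tfrac{\lambda}{2}R$ for an explicit operator $R$, and then to read off $\tfrac{\lambda}{2}$-averagedness directly from Definition~\ref{def:firmlynonex}. First I would introduce the shorthand $J_A=\prox_{\gamma\mc{I}_{\mc{C}}}$ and $J_B=\prox_{\gamma F}$ for the two resolvents, together with their reflected resolvents $R_A = 2J_A-\Id$ and $R_B = 2J_B-\Id$, so that the composite term in \eqref{eq:ADMMoperator} becomes $J_B\circ R_A$.

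The key step is a short algebraic identity. Writing $J_A=\tfrac12(\Id+R_A)$ and $J_B=\tfrac12(\Id+R_B)$, I would compute
\[
J_B R_A - J_A = \tfrac12(R_B R_A + R_A) - \tfrac12(R_A+\Id) = \tfrac12(R_B R_A - \Id),
\]
so that
\[
\T = \Id + \lambda\bigl(J_B R_A - J_A\bigr) = \Id + \tfrac{\lambda}{2}(R_B R_A - \Id) = \Bigl(1-\tfrac{\lambda}{2}\Bigr)\Id + \tfrac{\lambda}{2}\,R_B R_A.
\]
This exhibits $\T$ in the form of Definition~\ref{def:firmlynonex} with $\alpha = \lambda/2$, which lies in $(0,1)$ precisely because $\lambda\in(0,2)$.

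It remains to verify that $R:=R_B R_A$ is non-expansive. Since $F$ is closed and convex and $\mc{C}$ is a closed convex set (so $\mc{I}_{\mc{C}}$ is closed and convex), the proximal operators $J_A$ and $J_B$ are firmly non-expansive; the reflection $2J-\Id$ of a firmly non-expansive operator is non-expansive, and a composition of non-expansive operators is again non-expansive. Hence $R_A$ and $R_B$ are non-expansive, and so is their composition $R$. Combining this with the displayed identity yields $\T=(1-\alpha)\Id+\alpha R$ with $\alpha=\lambda/2\in(0,1)$ and $R$ non-expansive, i.e.\ $\T$ is $\lambda/2$-averaged.

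I do not expect a genuine obstacle here: the statement is essentially the standard fact that relaxed Douglas--Rachford splitting with relaxation $\lambda$ is $\lambda/2$-averaged. The only points needing care are getting the factor $\tfrac12$ right when matching \eqref{eq:ADMMoperator} to the reflected-resolvent form, and invoking the firm non-expansiveness of $\prox_{\gamma\mc{I}_{\mc{C}}}$ and $\prox_{\gamma F}$, which is guaranteed by the closedness and convexity of $\mc{I}_{\mc{C}}$ and $F$ from the problem setup; both facts can also be cited directly from \cite{bauschke2011convex}.
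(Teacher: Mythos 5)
Your proof is correct and takes essentially the same route as the paper's: both arguments identify the unrelaxed Douglas--Rachford operator $\prox_{\gamma F}\circ(2\prox_{\gamma \mc{I}_{\mc{C}}}-\Id)-\prox_{\gamma \mc{I}_{\mc{C}}}+\Id$ as $1/2$-averaged and then observe that the $\lambda$-relaxation is $\lambda/2$-averaged. The only difference is that you write out explicitly the reflected-resolvent identity $\T=(1-\tfrac{\lambda}{2})\Id+\tfrac{\lambda}{2}R_BR_A$, whereas the paper obtains the same fact by citing Propositions 12.27 and 4.21 of \cite{bauschke2011convex}.
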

\begin{proof}
See Appendix \ref{ssec:proofofADMM}.
\end{proof}
For some special examples of $F_i$'s, e.g., each $F_i$ is the indicator function of a subspace and $\lambda=1$, the operator $\T$ in \eqref{eq:ADMMoperator} becomes pseudo-contractive \cite{Bauschke2014linear}. In such cases, the delay-agnostic asynchronous ADMM can achieve linear convergence for bounded delays by Theorems \ref{thm:boundedgeneral}--\ref{thm:adaptbounded}, and sublinear convergence for unbounded delays by Theorem \ref{thm:unbounded} in Appendix \ref{sec:convanasublineargrow}. Moreover, the delay-adaptivity can be seen straightforwardly from Theorem \ref{thm:adaptbounded}.

\begin{rem}
A closely related asynchronous ADMM is developed in \cite{zhang2014asynchronous}, which updates according to \eqref{eq:TiADMMz}--\eqref{eq:TiADMMx} with $\lambda=1$, but sets the number of workers to be identical to the number of blocks with each $z_i$ being updated by the worker $i$. To guarantee convergence, they assume at each iteration, the probability for each worker to return their local variable to the master is identical, which rarely holds in practice. Moreover, they only provide $O(1/k)$ convergence in terms of the running-average $\frac{1}{k}\sum_{t=0}^{k-1} \bz(t)$ when the delays are bounded and has no convergence guarantees on the last-iterate $\bz(k)$. However, we prove convergence of the last iterate for bounded (Theorem \ref{thm:boundedgeneral}) and unbounded delays (Theorem \ref{thm:general}). Such rates can be improved as discussed below Lemma \ref{lemma:ADMMlemma} when the operator is pseudo-contractive.
\end{rem}

\subsubsection{Extension to a more general problem}\label{sssec:extendedADMM}

We extend the delay-agnostic asynchronous ADMM to solve
\begin{equation}\label{eq:complicatedproblem}
    \underset{\bz\in \mc{Z}}{\operatorname{minimize}}~f(\bz)+r(\bz),
\end{equation}
where $\bz=(z_1,\ldots,z_m)$ with each $z_i\in\mathbb{R}^{d_i}$, $\mc{Z}\subseteq \mathbb{R}^{\sum_{i=1}^m d_i}$ is a convex and closed set and is easy to project, $f$ is convex and $L$-smooth, and $r(\bz)=\sum_{i=1}^m r_i(z_i)$ with each $r_i$ being a convex, closed, but possibly non-smooth function. We discuss some examples of $\mc{Z}$: i) When $\mc{Z}=\mathbb{R}^{\sum_{i=1}^m d_i}$, \eqref{eq:complicatedproblem} reduces to problem \eqref{eq:composite}; ii) When $\mc{Z}=\{\bz: z_1 = z_2 = \ldots = z_m\}$, the problem \eqref{eq:complicatedproblem} becomes consensus optimization, which is slightly general than problem \eqref{eq:ADMMprob} since the objective function is allowed to have a non-separable smooth component; iii) When $\mc{Z}=\{\bz: A\bz\le b\}$ or $\mc{Z}=\{\bz: A\bz=b\}$ for a matrix $A$ and a vector $b$, it becomes resource allocation \cite{lin2015global}.

To exploit the composite structure of the objective function, we replace $\prox_{\gamma F}$ in \eqref{eq:ADMMoperator} by
\begin{equation}\label{eq:tprime}
    \T' = \theta \prox_{\gamma r}\circ(\Id-\gamma\nabla f) + (1-\theta)\Id,
\end{equation}
where $\gamma\in(0,2/L)$ and $\theta = \frac{\min(1, 1/(L\gamma))+1/2}{2}$. We average the proximal gradient operator with $\Id$ to make $\T'$ a $1/2$-averaged operator, which will be further used in convergence analysis (see Lemma \ref{lemma:extendedADMM} later). We also replace $\mc{C}$ in \eqref{eq:ADMMoperator} by $\mc{Z}$. Then, the new operator takes this form: Let $\lambda\in(0,2)$,
\begin{equation}\label{eq:TgeneralADMM}
    \T= \Id+\lambda(\T'\circ (2\prox_{\gamma \mc{I}_{\mc{Z}}}-\Id)-\prox_{\gamma \mc{I}_{\mc{Z}}}),
\end{equation}
which can be simplified to
\begin{equation}\label{eq:extendedsimpleADMMoperator}
    \begin{split}
        \T =& \prox_{\gamma r}\circ(\Id-\gamma\nabla f)\circ (2\prox_{\gamma \mc{I}_{\mc{Z}}}-\Id)\\
        &+\frac{2}{3}(\Id-\prox_{\gamma \mc{I}_{\mc{Z}}})
    \end{split}
\end{equation}
when $\gamma=1/L$ and $\lambda=4/3$. If the optimal solution set of \eqref{eq:complicatedproblem} is non-empty, so is $Fix \T$ with $\T$ in \eqref{eq:TgeneralADMM} and for any $\bx^\star\in Fix \T$, $\prox_{\gamma \mc{I}_{\mc{Z}}}(\mathbf{x}^\star)$ is an optimal solution of problem \eqref{eq:complicatedproblem} \cite{bauschke2011convex}.

We refer to DEGAS with $\T$ in \eqref{eq:TgeneralADMM} as extended delay-agnostic asynchronous ADMM, whose asynchronous implementation is straightforward to see from Algorithm \ref{alg:Coor}, where $\T_i(\bx^w)$ in step 6 can be computed by
\begin{align}
    z_i &= [\prox_{\gamma\mc{I}_{\mc{Z}}}(\bx^w)]_i,\allowdisplaybreaks\\
    y_i &= 2z_i-x_i^w,\label{eq:xiupdategeneralADMM}\allowdisplaybreaks\\
    \T_i(\bx^w) &= \lambda\theta\prox_{\gamma r_i}(y_i-\gamma\nabla_i f(y_i))\nonumber\\
    &+(1-(1-\theta)\lambda)x_i^w-\lambda(1-2(1-\theta))z_i.\label{eq:mainstepgeneralADMM}
\end{align}

\begin{lemma}\label{lemma:extendedADMM}
The operator $\T'$ in \eqref{eq:tprime} is $1/2$-averaged and $\T$ in \eqref{eq:TgeneralADMM} is $\lambda/2$-averaged.
\end{lemma}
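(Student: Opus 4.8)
The plan is to reduce both claims to standard averagedness calculus, leaning on the preceding lemma which shows that the proximal-gradient operator $P:=\prox_{\gamma r}\circ(\Id-\gamma\nabla f)$ of \eqref{eq:operatorcpgd} is $\alpha_P$-averaged with $\alpha_P=\frac{1}{\min(1,1/(L\gamma))+1/2}$. For the first claim I would write $P=(1-\alpha_P)\Id+\alpha_P R$ for some nonexpansive $R$ (Definition \ref{def:firmlynonex}), substitute into $\T'=\theta P+(1-\theta)\Id$ from \eqref{eq:tprime}, and collect terms to get $\T'=(1-\theta\alpha_P)\Id+\theta\alpha_P R$. The point is that the prescribed $\theta=\frac{\min(1,1/(L\gamma))+1/2}{2}$ equals exactly $\frac{1}{2\alpha_P}$, so that $\theta\alpha_P=\tfrac12$; hence $\T'=\tfrac12\Id+\tfrac12 R$ is $1/2$-averaged. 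I would also note that $\gamma\in(0,2/L)$ forces $\alpha_P\in[2/3,1)$ and $\theta\in(1/2,3/4]$, so the combination $\theta P+(1-\theta)\Id$ is a genuine convex combination and the argument is valid.

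For the second claim, the key observation is that $\T$ in \eqref{eq:TgeneralADMM} has Douglas--Rachford form built from two reflections. Writing $J:=\prox_{\gamma\mc{I}_{\mc{Z}}}$ and $R_J:=2J-\Id$, a rearrangement of \eqref{eq:TgeneralADMM} should give
\[
\T=\Bigl(1-\tfrac{\lambda}{2}\Bigr)\Id+\tfrac{\lambda}{2}S,\qquad S:=(2\T'-\Id)\circ R_J .
\]
I would verify this identity by expanding the right-hand side and matching it against $\T=\Id+\lambda(\T'\circ R_J-J)$, using the cancellation $\Id-2J=-R_J$. It then remains to show $S$ is nonexpansive: since $J$ is a proximal operator it is firmly nonexpansive, equivalently $1/2$-averaged, so $R_J=2J-\Id$ is nonexpansive; and by the first claim $\T'$ is firmly nonexpansive, so $2\T'-\Id$ is likewise nonexpansive. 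Thus $S$ is a composition of two nonexpansive maps, hence nonexpansive, and since $\lambda\in(0,2)$ gives $\lambda/2\in(0,1)$, $\T$ is $\lambda/2$-averaged by Definition \ref{def:firmlynonex}.

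The computation is essentially bookkeeping, so I do not expect a deep obstacle; the two places to be careful are (i) confirming the algebraic identity $\theta\alpha_P=\tfrac12$, i.e.\ that the stated $\theta$ really equals $1/(2\alpha_P)$, and (ii) carrying out the rearrangement of \eqref{eq:TgeneralADMM} into canonical averaged form without sign errors, in particular the step $\Id-2J=-R_J$. The only external facts I rely on are the equivalence ``$J$ firmly nonexpansive $\iff 2J-\Id$ nonexpansive'' and the closedness of nonexpansiveness under composition, both standard \cite{bauschke2011convex}; everything else follows from the preceding proximal-gradient lemma.
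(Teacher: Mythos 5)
Your proposal is correct and follows essentially the same route as the paper: the first claim is proved by exactly the same cancellation $\theta\alpha_P=\tfrac12$ after writing the proximal-gradient operator in averaged form, and the second by recognizing $\T=(1-\lambda/2)\Id+(\lambda/2)S$ with $S$ nonexpansive. The only cosmetic difference is that you verify the identity $S=(2\T'-\Id)\circ(2\prox_{\gamma\mc{I}_{\mc{Z}}}-\Id)$ by direct expansion, whereas the paper obtains the $1/2$-averagedness of $\T'\circ(2\prox_{\gamma\mc{I}_{\mc{Z}}}-\Id)-\prox_{\gamma\mc{I}_{\mc{Z}}}+\Id$ by citing Proposition 4.21 of \cite{bauschke2011convex}, of which your computation is the standard proof.
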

\begin{proof}
See Appendix \ref{sec:proofofextendedADMM}.
\end{proof}

With Lemma \ref{lemma:extendedADMM}, convergence of the extended asynchronous ADMM for bounded and unbounded delays can be recovered straightforwardly from Theorems \ref{thm:boundedgeneral}, \ref{thm:general}, respectively.

\section{Experiments}\label{sec:experiment}

	\begin{figure*}[!htb]
		\centering
		\caption{Convergence for Lasso}
		\subfigure[theoretical parameters]{
		\includegraphics[scale=0.38]{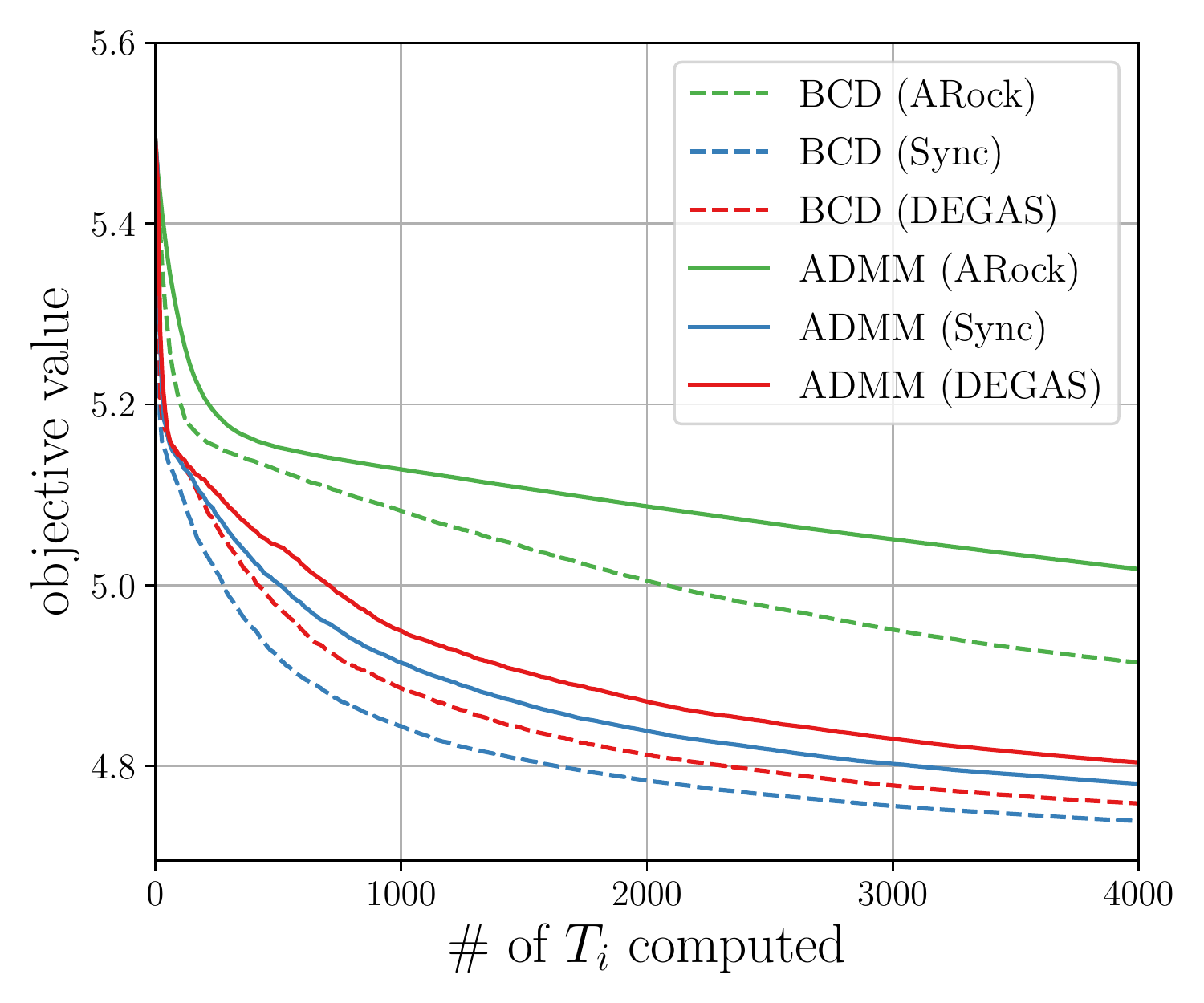}\label{fig:lassocifar_iter}}
        \subfigure[hand-tuned parameters]{
		\includegraphics[scale=0.38]{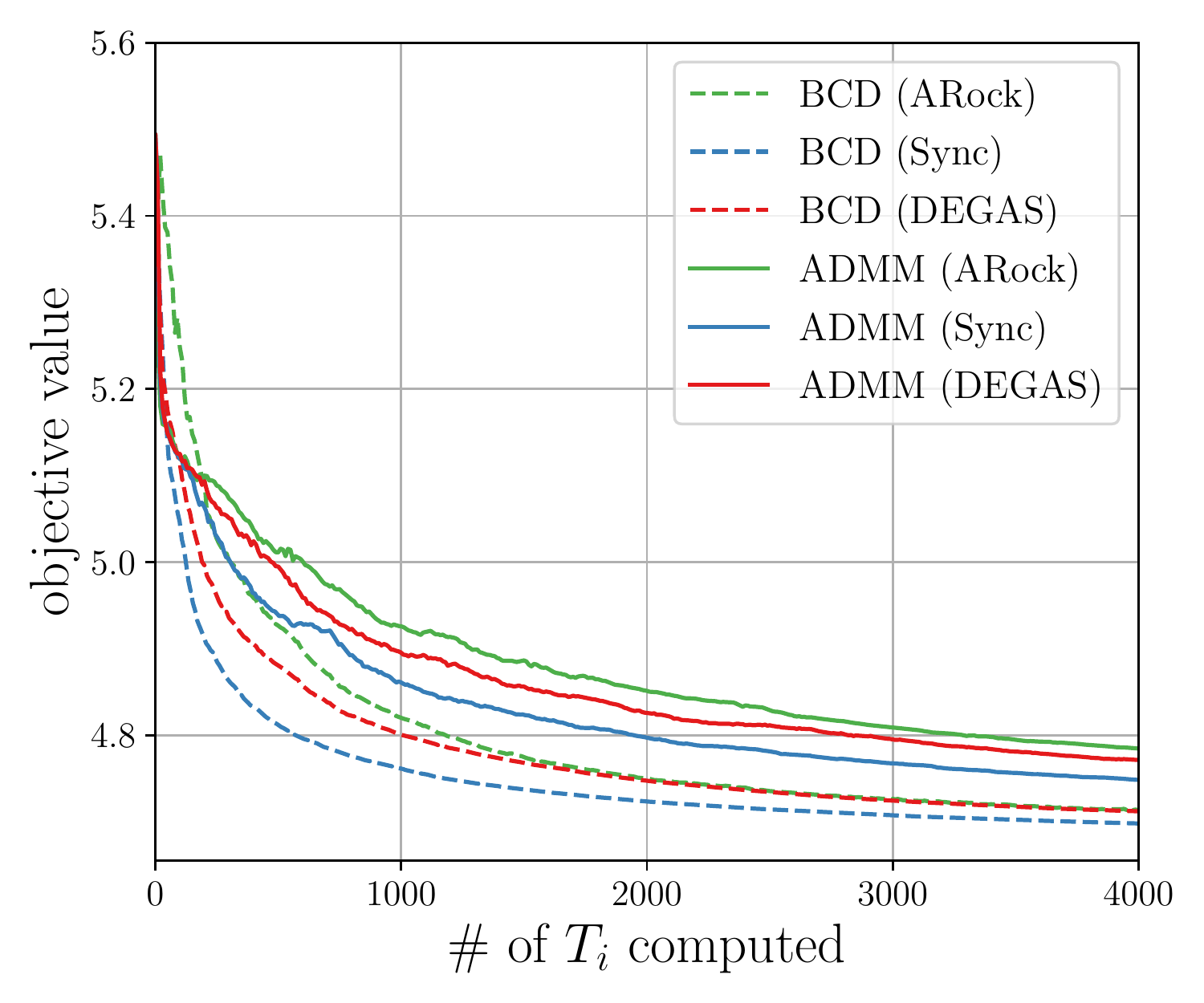}\label{fig:lassocifar_time}}
		\label{fig:LASSOCIFAR}
	\end{figure*}

    \begin{figure*}[htb]
		\centering
        \vspace{-0.6cm}
		\caption{Convergence for Logistic regression}
	\subfigure[theoretical parameters]{
	  \includegraphics[scale=0.38]{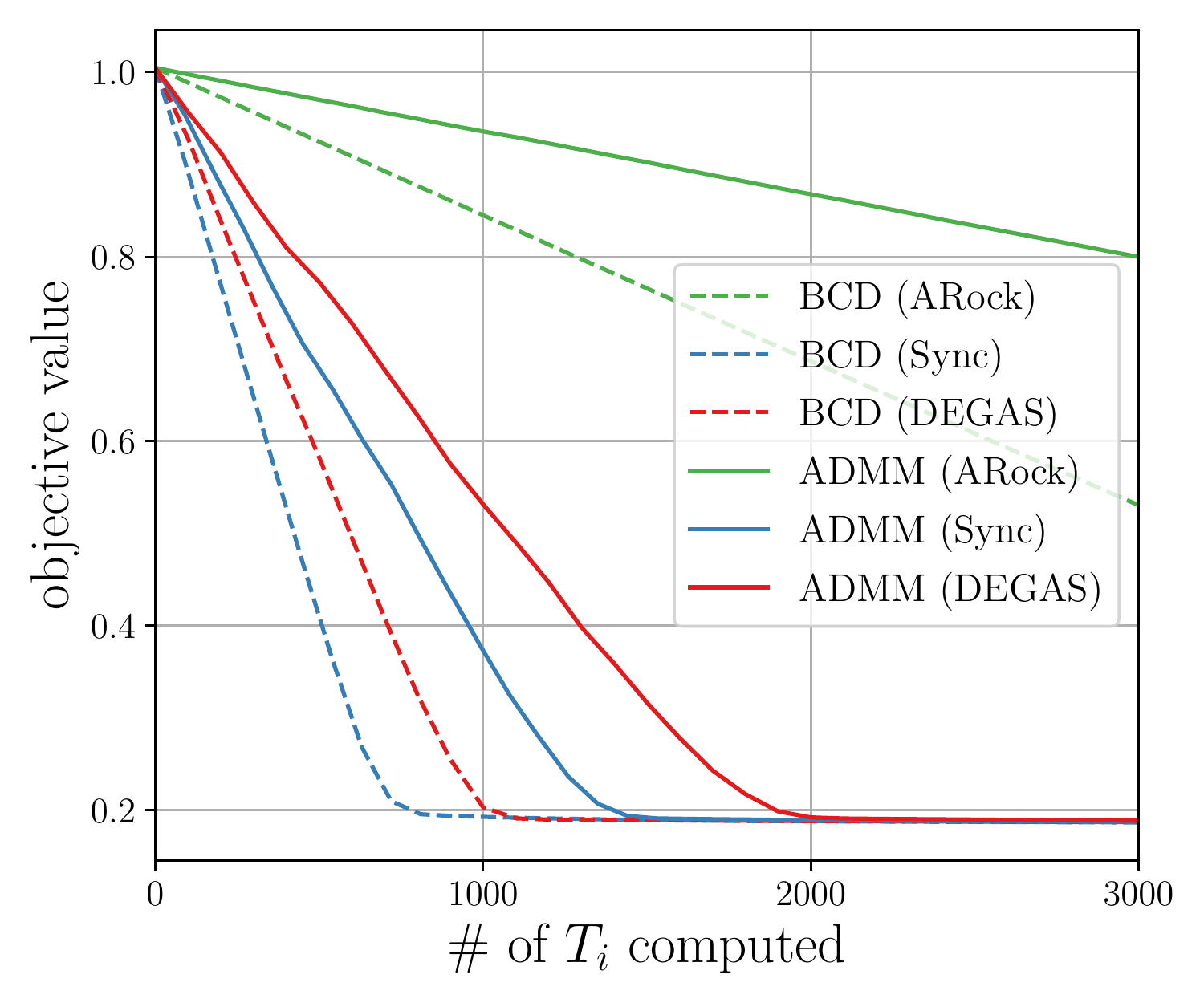}\label{fig:logisticcifar_iter}}
        \subfigure[hand-tuned parameters]{
	\includegraphics[scale=0.38]{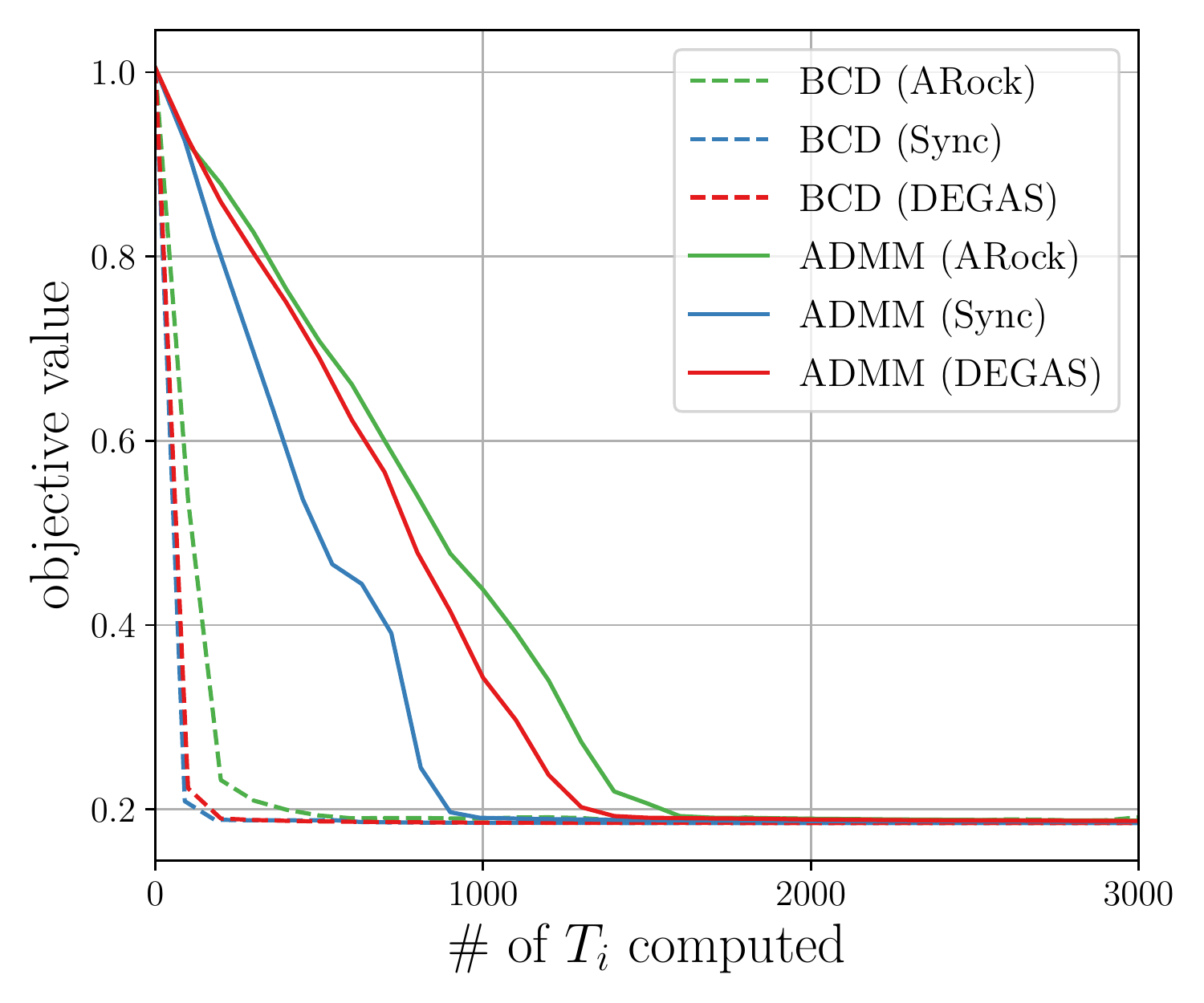}\label{fig:logisticcifar_time}}
		\label{fig:LogisticCIFAR}
  \vspace{-0.25cm}
\end{figure*}

We evaluate the practical performance of DEGAS on 
Lasso and regularized logistic regression problems on the CIFAR100 dataset~\cite{krizhevsky2009learning}.

Let $a_i$ be the feature of the $i$th sample, $b_i$ be the corresponding label, and $N$ be the number of samples. Then our test problems are on the form
\begin{equation}\label{eq:simuprob}
    \begin{split}
        \underset{x\in\mathbb{R}^d}{\operatorname{minimize}}~\frac{1}{N}\sum_{i=1}^N \ell_i(x)+\lambda_1\|x\|_1,
    \end{split}
\end{equation}
where $\ell_i(x) = \frac{1}{2}\|a_ix-b_i\|^2$ in Lasso and $\ell_i(x) = \log(1+e^{-b_i(a_i^Tx)})\!+\!\frac{\lambda_2}{2}\|x\|^2$ in regularized logistic regression.
We use $\lambda_1=10^{-3}$ 
and $\lambda_2=10^{-4}$.
We compare the proposed DEGAS with ARock and their common synchronous counterpart by solving \eqref{eq:simuprob}. In these methods, we choose the operator $\T$ as \eqref{eq:operatorcpgd} with $\gamma=1/L$ in BCD and \eqref{eq:extendedsimpleADMMoperator} in the extended ADMM. We set $m=20$ and implement all the methods on a 10-core machine ($1$ master and $9$ workers) using the message-passing framework MPI4py \cite{dalcin2008mpi}. {Note that we do not assume any delay model and all the delays are generated by real interactions between the master and workers.} We consider both theoretical and hand-tuned parameters. 
In the former setting, we fine-tune the step-size of ARock within its theoretical range in \cite{Feyzmahdavian21} which is broader than that in \cite{Peng16} in the experiment setting, and the other two methods include no parameters to tune. In the hand-tune step-size setting, we run all the methods for finding the fixed point of $\operatorname{Id}+\lambda(\T-\operatorname{Id})$, $\lambda>0$ and tune $\lambda$.


We plot the convergence in the number of computed $\T_i$ in Figures \ref{fig:LASSOCIFAR}--\ref{fig:LogisticCIFAR}, from which we make the following observation: 1) For both theoretical and hand-tuned step-sizes, DEGAS is much faster than ARock in all tested scenarios, which demonstrates its superior performance compared to ARock; 2) the synchronous method outperforms DEGAS in terms of the number of $\T_i$ computation. However, as asynchronous methods can complete more computations within a fixed time interval compared to synchronous methods, DEGAS may converge faster in wall-clock time, which is discussed in Appendix \ref{sec:exprunningtime}. We also observe that DEGAS and the synchronous methods can converge with much larger hand-tuned step-sizes than ARock. 
We plot the delay distribution generated by the experiments in Appendix \ref{sec:delaydistribution}.

	\section{Conclusion}
    We have proposed a delay-agnostic asynchronous coordinate update (DEGAS) method to find fixed-points of operators, which may have broad applications to algebra, optimization, and game theory. Compared to the alternative method ARock that can only converge under a delay-dependent parameter condition, DEGAS can converge under a delay-free parameter condition. Moreover, DEGAS can adapt well to the actual delays and converge significantly faster than ARock in both the settings of theoretical and hand-tuned parameters according to our numerical experiments. 
    	\section*{Acknowledgements}
	This work was supported by WASP and the Swedish Research Council (Vetenskapsr\r{a}det) under grants 2019-05319 and 2020-03607. We thank the anonymous reviewers for their detailed and valuable feedback.
	
\newpage
\newpage

\bibliography{reference.bib}
\bibliographystyle{icml2023}

\onecolumn

\appendix

\section{Proof of Theorem \ref{thm:boundedgeneral}}\label{appendix:proofthmboundegeneral}

At each iteration $k\in\N_0$, each $i\in [m]$ has equal probability to be selected as $i(k)$. Then by \eqref{eq:Alg1formal}, \begin{equation}\label{eq:nonincreasingdis}
	\begin{split}
		& \mathbb{E}[\|\bx(k+1)-\bx^\star\|^2]\\
		= & \sum_{i=1}^m \frac{1}{m}\left(\|\T_i(\bx(k-\tau(k)))-x_i^\star\|^2+\sum_{j\ne i}\|x_j(k)-x_j^\star\|^2\right)\\
		= & \frac{1}{m}\|\T(\bx(k-\tau(k)))-\bx^\star\|^2+\left(1-\frac{1}{m}\right)\|\bx(k)-\bx^\star\|^2,
	\end{split}
\end{equation}
where $x_i^\star\in\R^{d_i}$ is the $i$th block of $\bx^\star$ and the expectation is taken over the block $i(k)$. Taking \eqref{eq:nonincreasingdis} at hand, we are ready to prove the results for both averaged and pseudo-contractive operator $\T$.

\subsection{Proof for averaged $\T$}

For all $k\in\N_0$, let
\begin{align*}
    V(k)&=\mathbb{E}[\|\bx(k)-\bx^\star\|^2],\\ W(k)&=\frac{1-\alpha}{m\alpha}\mathbb{E}[\|(\operatorname{Id}-\T)(\bx(k))\|^2],
\end{align*}
where the expectations are taken over the historical updated blocks $\{i(t)\}_{t\le k}$.
For all $t\in\N_0$, define 
\begin{align*}
    \mc{A}(t)&=[t(\bar{\tau}+1), (t+1)(\bar{\tau}+1)),\\
    a(t)&=\operatorname{\arg\;\max}_{k\in \mc{A}(t)} V(k).
\end{align*}
The proof includes three steps. Step 1 establishes the relationship between $\{V(k)\}$ and $\{W(k)\}$:
\begin{equation}\label{eq:boundedLyapunovfirm}
	V(k+1) \le\max_{\max(0, k-\bar{\tau})\le \ell\le k} V(\ell) - W(k-\tau(k)),\quad\forall k\in\N_0.
\end{equation}
Based on \eqref{eq:boundedLyapunovfirm}, step 2 proves
\begin{equation}\label{eq:Vmonotone}
    V(a(t)) \le V(a(t-1))- W(a(t)-1-\tau(a(t)-1)),\quad \forall t\in\N,
\end{equation}
which is then used to derive \eqref{eq:1overkconverge} in step 3.

\textbf{Step 1}: The proof uses Proposition 4.25 in \cite{bauschke2011convex}: Denote the average parameter of $\T$ as $\alpha\in (0,1)$. Then, for any $\bx,\by\in\R^d$,
\begin{equation}\label{eq:prop425}
\begin{split}
    &\|\T(\bx)-\T(\by)\|^2\le \|\bx-\by\|^2-\frac{1-\alpha}{\alpha}\|(\Id-\T)(\bx)-(\Id-\T)(\by)\|^2.
\end{split}
\end{equation}
Substituting $\bx=\bx(k-\tau(k))$ and $\by=\bx^\star$ into \eqref{eq:prop425} and using $\T(\bx^\star)=\bx^\star$, we have
\begin{equation}\label{eq:applyprop425}
\begin{split}
    &\|\T(\bx(k-\tau(k)))-\bx^\star\|^2\\
    \le& \|\bx(k-\tau(k))-\bx^\star\|^2-\frac{1-\alpha}{\alpha}\|(\Id-\T)(\bx(k-\tau(k)))\|^2.
\end{split}
\end{equation}
Substituting \eqref{eq:applyprop425} into \eqref{eq:nonincreasingdis} and taking expectation on both sides of the resulting equation yields
\begin{equation}\label{eq:oldboundedLyapunovfirm}
	\begin{split}
		V(k+1) \le& \frac{V(k-\tau(k))}{m}+\left(1-\frac{1}{m}\right)V(k)-W(k-\tau(k))\\
		\le& \max_{\max(0, k-\tau(k))\le \ell\le k} V(\ell) - W(k-\tau(k)).
	\end{split}
\end{equation}
By \eqref{eq:oldboundedLyapunovfirm} and $\tau(k)\le\bar{\tau}$ assumed in Assumption \ref{asm:boundedelay}, we have \eqref{eq:boundedLyapunovfirm}.

\textbf{Step 2}: We first show by induction that for any $k\in\N_0$ satisfying $k+1\in \mc{A}(t)$ or equivalently, $k\in [t(\bar{\tau}+1)-1, (t+1)(\bar{\tau}+1)-1)$,
\begin{equation}\label{eq:maximalinthepast}
    \max_{\max(0, k-\bar{\tau})\le \ell\le k} V(\ell) \le \max_{\ell\in \mc{A}(t-1)} V(\ell).
\end{equation}
When $k=t(\bar{\tau}+1)-1$, since $[\max(0,k-\bar{\tau}),k]=\mc{A}(t-1)$, the equation \eqref{eq:maximalinthepast} holds. Suppose that \eqref{eq:maximalinthepast} holds at $k=k'-1$ for some $k'$ satisfying $k'+1\in \mc{A}(t)$. Then, by letting $k=k'-1$ in both \eqref{eq:boundedLyapunovfirm} and \eqref{eq:maximalinthepast}, we have \begin{equation*}
    \begin{split}
        V(k')&\overset{\eqref{eq:boundedLyapunovfirm}}{\le} \max_{\max(0, k'-1-\bar{\tau})\le \ell\le k'-1} V(\ell) - W(k'-1-\tau(k'-1))\\
        &\le \max_{\max(0, k'-1-\bar{\tau})\le \ell\le k'-1} V(\ell)\\
        &\overset{\eqref{eq:maximalinthepast}}{\le}  \max_{\ell\in \mc{A}(t-1)} V(\ell).
    \end{split}
\end{equation*}
This, together with \eqref{eq:maximalinthepast} at $k=k'-1$, yields \eqref{eq:maximalinthepast} at $k=k'$. Following this induction procedure we obtain \eqref{eq:maximalinthepast} for all $k$ satisfying $k+1\in \mc{A}(t)$. Then, by letting $k=a(t)-1$ in both \eqref{eq:boundedLyapunovfirm} and \eqref{eq:maximalinthepast}, we have
\begin{equation*}
\begin{split}
    V(a(t)) \overset{\eqref{eq:boundedLyapunovfirm}}{\le}& \max_{\max(0, a(t)-1-\bar{\tau})\le \ell\le a(t)-1} V(\ell) - W(a(t)-1-\tau(a(t)-1))\\
    \overset{\eqref{eq:maximalinthepast}}{\le}& \max_{\ell\in \mc{A}(t-1)} V(\ell) - W(a(t)-1-\tau(a(t)-1))\\
    =& V(a(t-1))- W(a(t)-1-\tau(a(t)-1)),
\end{split}
\end{equation*}
\emph{i.e.}, \eqref{eq:Vmonotone} holds.

\textbf{Step 3}: For any $t'\in\N$, by adding \eqref{eq:Vmonotone} from $t=1$ to $t=t'$, we have
\begin{equation}\label{eq:sumboundedw}
\begin{split}
    \sum_{t=1}^{t'} W(a(t)-1-\tau(a(t)-1))&\le V(a(0))-V(a(t'))\\
    &\le V(a(0))\\
    &=V(0),
\end{split}
\end{equation}
where $V(a(0))=V(0)$ can be easily derived from \eqref{eq:boundedLyapunovfirm}. For any $k\in\N_0$, let $t'(k)=\lfloor k/(\bar{\tau}+1)\rfloor-1$. Then, for all $t\in [t'(k)]$,
\begin{equation}\label{eq:atandk}
\begin{split}
    a(t)-1-\tau(a(t)-1) &\le a(t)\\
&\le a(t'(k))\\
&\le (t'(k)+1)(\bar{\tau}+1)\\
&\le k,
\end{split}
\end{equation}
which, together with \eqref{eq:sumboundedw}, yields
\begin{equation*}
\begin{split}
    \min_{1\le \ell \le k} W(\ell) &\overset{\eqref{eq:atandk}}{\le} \min_{1\le t\le t'(k)} W(a(t)-1-\tau(a(t)-1))\\
    &\le \frac{1}{t'(k)}\sum_{t=1}^{t'(k)} W(a(t)-1-\tau(a(t)-1))\\
    &\overset{\eqref{eq:sumboundedw}}{\le} \frac{V(0)}{t'(k)}.
\end{split}
\end{equation*}
Moreover, $t'(k) \ge k/(\bar{\tau}+1)-2$. Then, we have
\begin{equation*}
\begin{split}
    \min_{1\le \ell \le k} W(\ell) &\le \frac{V(0)}{k/(\bar{\tau}+1)-2}\\
    &= O(1/k),
\end{split}
\end{equation*}
\emph{i.e.}, \eqref{eq:1overkconverge} holds.

\subsection{Proof for pseudo-contractive $\T$}
By the pseudo-contractivity of $\T$,
\begin{equation*}
\begin{split}
    \|\T(\bx(k-\tau(k)))-\bx^\star\|^2\le c^2\|\bx(k-\tau(k))-\bx^\star\|^2,
\end{split}
\end{equation*}
substituting which into \eqref{eq:nonincreasingdis} and taking expectation on both sides of the resulting equation ensures
\begin{equation}\label{eq:boundedcontractionLyap}
	V(k+1)\le \frac{c^2}{m}V(k-\tau(k))+\left(1-\frac{1}{m}\right)V(k).
\end{equation}
To proceed, we establish a sequence result in the following lemma.
\begin{lemma}\label{lemma:sequencelemma}
Suppose that the following holds for a non-negative sequence $\{V(k)\}$ and two positive constants $p,q\in(0,1)$ satisfying $p+q<1$:
\begin{equation}\label{eq:Vkseq}
    V(k+1)\le pV(k)+q \max_{\max(0,k-\bar{\tau})\le \ell\le k} V(\ell).
\end{equation}
Then,
\begin{equation}\label{eq:sequenceconv}
    V(k)\le \rho^kV(0),~\forall k\in\N_0,
\end{equation}
where
\begin{equation}\label{eq:smallrho}
\rho = (p+q)^{\frac{1}{1+(1-p)\bar{\tau}}}.
\end{equation}
\end{lemma}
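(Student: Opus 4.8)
The plan is to prove the bound \eqref{eq:sequenceconv} by strong induction on $k$, with the ansatz $V(k)\le \rho^k V(0)$. The base case $k=0$ is immediate, so suppose $V(\ell)\le \rho^\ell V(0)$ for all $\ell\le k$ and substitute into the recursion \eqref{eq:Vkseq}. Because $\rho\in(0,1)$, the delayed maximum is controlled by its oldest term, $\max_{\max(0,k-\bar\tau)\le \ell\le k}V(\ell)\le \rho^{\max(0,k-\bar\tau)}V(0)$, so that
\begin{equation*}
V(k+1)\le \left(p\rho^{k}+q\rho^{\max(0,k-\bar\tau)}\right)V(0).
\end{equation*}
The induction therefore closes as soon as I can show $p\rho^{k}+q\rho^{\max(0,k-\bar\tau)}\le \rho^{k+1}$ for every $k$.

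Next I would reduce this family of inequalities to a single scalar one. First note that $\rho>p$: since $p+q<1$ and $1/(1+(1-p)\bar\tau)\le 1$, we have $\rho=(p+q)^{1/(1+(1-p)\bar\tau)}\ge p+q>p$. Now set $h(j)=\rho^{j}(\rho-p)-q$, and observe that the desired inequality, after dividing by $\rho^{\max(0,k-\bar\tau)}$, is exactly $h\big(\min(k,\bar\tau)\big)\ge 0$. Since $\rho-p>0$ and $\rho^{j}$ is decreasing, $h$ is decreasing in $j$, so $h(j)\ge h(\bar\tau)$ for all $j\le\bar\tau$. Hence it suffices to establish the single scalar inequality
\begin{equation*}
p\rho^{\bar\tau}+q\le \rho^{\bar\tau+1}.
\end{equation*}

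Proving this inequality with the explicit $\rho$ of \eqref{eq:smallrho} is the crux. Using the defining relation $\rho^{1+(1-p)\bar\tau}=p+q$ and the exponent identity $\bar\tau+1=(1+(1-p)\bar\tau)+p\bar\tau$, I rewrite $\rho^{\bar\tau+1}=(p+q)\rho^{p\bar\tau}$. Putting $u=\rho^{\bar\tau}\in(0,1)$, the inequality becomes $\Psi(u)\ge 0$ with $\Psi(u)=(p+q)u^{p}-pu-q$. I would then exploit that $\Psi(1)=0$, that $\Psi$ is concave (as $\Psi''(u)=(p+q)p(p-1)u^{p-2}<0$), and that $\Psi'(u)=p\big((p+q)u^{p-1}-1\big)$ is nonpositive precisely for $u\ge u_c:=(p+q)^{1/(1-p)}$; hence $\Psi$ decreases on $[u_c,1]$ and $\Psi(u)\ge\Psi(1)=0$ there. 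It remains to check $u=\rho^{\bar\tau}\in[u_c,1]$: clearly $u<1$, and since $p+q<1$ while $\bar\tau/(1+(1-p)\bar\tau)\le 1/(1-p)$ (equivalent to $0\le 1$ after cross-multiplying), raising $p+q$ to the smaller exponent gives $u=(p+q)^{\bar\tau/(1+(1-p)\bar\tau)}\ge (p+q)^{1/(1-p)}=u_c$. Thus $\Psi(\rho^{\bar\tau})\ge 0$, which is the scalar inequality, and the induction is complete.

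The main obstacle is exactly this scalar inequality: it amounts to verifying that the closed-form $\rho$ in \eqref{eq:smallrho} dominates the true characteristic root of $\rho^{\bar\tau+1}=p\rho^{\bar\tau}+q$, which has no closed form. This is where the sharpening over the corresponding bound in \cite{Feyzmahdavian21} must come from, and the delicate point is not the induction itself but the concavity analysis of $\Psi$ together with the comparison $\rho^{\bar\tau}\ge u_c$ that places $u$ on the favorable, decreasing branch of $\Psi$. I expect the rest (the block-maximum bound on the delayed term and the monotonicity of $h$) to be routine once $\rho>p$ is recorded.
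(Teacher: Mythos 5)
Your proof is correct. The induction skeleton and the reduction to the single scalar inequality $p\rho^{\bar\tau}+q\le\rho^{\bar\tau+1}$ (equivalently $p+q\rho^{-\bar\tau}\le\rho$) coincide with the paper's argument — the paper arrives at exactly this target via its equations \eqref{eq:keyequationlinear} and \eqref{eq:rhotaulowerbound}, and your handling of the $\max(0,k-\bar\tau)$ truncation via the monotone function $h(j)=\rho^{j}(\rho-p)-q$ is if anything slightly more careful than the paper's, which simply uses $\rho^{k'-\bar\tau}\ge\rho^{\max(0,k'-\bar\tau)}$. Where you genuinely diverge is in proving the crux inequality. The paper substitutes $\rho^{\bar\tau}=\left(\frac{p+q}{\rho}\right)^{1/(1-p)}$, linearizes this power via Bernoulli's inequality $a^{b}\ge 1-(1-a)b$ for $b=\frac{1}{1-p}\ge 1$, and then reduces the remaining rational inequality to showing a quadratic $p\alpha^{2}-(1+p+q)p\alpha+p(p+q)\le 0$ on $[p+q,1]$, which follows from convexity and the roots $\alpha=p+q$ and $\alpha=1$. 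You instead substitute $u=\rho^{\bar\tau}$, use the exponent identity $\rho^{\bar\tau+1}=(p+q)u^{p}$, and analyze $\Psi(u)=(p+q)u^{p}-pu-q$ directly: concavity, the zero at $u=1$, the critical point $u_{c}=(p+q)^{1/(1-p)}$ (where in fact $\Psi'(u_{c})=0$), and the elementary verification that $\rho^{\bar\tau}\in[u_{c},1]$. Both arguments are exact at the endpoints and hence both certify the same $\rho$; yours avoids the Bernoulli linearization entirely and works with the concave function itself, which makes the mechanism of the sharpening over \cite{Feyzmahdavian21} (namely, that $u$ lands on the decreasing branch of $\Psi$) more transparent, at the cost of a slightly more delicate calculus computation than the paper's purely algebraic quadratic check.
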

\begin{proof}
We prove \eqref{eq:sequenceconv} by induction. Clearly, \eqref{eq:sequenceconv} holds for $k=0$. Suppose that \eqref{eq:sequenceconv} holds for all $k\in [0, k']$ for some $k'\in\N_0$. Then, by \eqref{eq:Vkseq},
\begin{equation*}
\begin{split}
    V(k'+1)\le (p\rho^{k'}+q\rho^{k'-\bar{\tau}})V(0).
\end{split}
\end{equation*}
Hence, to show \eqref{eq:sequenceconv} at $k=k'+1$, it suffices to prove $p\rho^{k'}+q\rho^{k'-\bar{\tau}}\le \rho^{k'+1}$, which is equivalent to \begin{equation}\label{eq:keyequationlinear}
    p+q\rho^{-\bar{\tau}}\le \rho.
\end{equation}
Therefore, if \eqref{eq:keyequationlinear} holds, so does \eqref{eq:sequenceconv} at $k=k'+1$. Following this induction procedure, we will have that \eqref{eq:sequenceconv} holds for all $k\in\N_0$.

Next, we prove \eqref{eq:keyequationlinear}, which includes three steps. Step 1 shows the equivalence between \eqref{eq:keyequationlinear} and
\begin{equation}\label{eq:importanteqrho}
    \left(\frac{p+q}{\rho}\right)^{\frac{1}{1-p}}\ge \frac{q}{\rho-p}.
\end{equation}
Step 2 proves the following inequality: For any $\alpha\in [p+q, 1]$,
\begin{equation}\label{eq:importanteq}
    \left(\frac{p+q}{\alpha}\right)^{\frac{1}{1-p}}\ge \frac{q}{\alpha-p}.
\end{equation}
Step 3 combines the first two steps and derives \eqref{eq:keyequationlinear}.

\textbf{Step 1}: The equation \eqref{eq:keyequationlinear} is equivalent to
\begin{equation}\label{eq:rhotaulowerbound}
    \rho^{\bar{\tau}}\ge \frac{q}{\rho-p}.
\end{equation}
By \eqref{eq:smallrho},
\begin{equation*}
\begin{split}
    \rho^{\bar{\tau}} &= (p+q)^{\frac{\bar{\tau}}{1+(1-p)\bar{\tau}}}\\
    &= (p+q)^{\frac{(1-p)\bar{\tau}}{1+(1-p)\bar{\tau}}\cdot\frac{1}{1-p}}\\
    &= (p+q)^{\left(1-\frac{1}{1+(1-p)\bar{\tau}}\right)\cdot\frac{1}{1-p}}\\
    &= \left((p+q)^{\left(1-\frac{1}{1+(1-p)\bar{\tau}}\right)}\right)^{\frac{1}{1-p}}\\
    &= \left(\frac{p+q}{(p+q)^{\frac{1}{1+(1-p)\bar{\tau}}}}\right)^{\frac{1}{1-p}}\\
    &\overset{\eqref{eq:smallrho}}{=}\left(\frac{p+q}{\rho}\right)^{\frac{1}{1-p}}.
\end{split}
\end{equation*}
Therefore, \eqref{eq:rhotaulowerbound} is equivalent to \eqref{eq:importanteqrho}, which, together with the equivalence between \eqref{eq:keyequationlinear} and \eqref{eq:rhotaulowerbound}, yields the equivalence between \eqref{eq:keyequationlinear} and \eqref{eq:importanteqrho}.

\textbf{Step 2}: By Bernoulli's inequality, for any $a\in(0,1],~b\ge 1$,
\begin{equation}\label{eq:bernouli}
\begin{split}
    a^b&=(1-(1-a))^b\\
    &\ge 1-(1-a)b.
\end{split}
\end{equation}
Letting $a=\frac{p+q}{\alpha}$ and $b=\frac{1}{1-p}$, we have
\begin{equation*}
    \left(\frac{p+q}{\alpha}\right)^{\frac{1}{1-p}}\ge 1-\frac{1-(p+q)/\alpha}{1-p}.
\end{equation*}
Therefore, \eqref{eq:importanteq} holds if
\begin{equation}\label{eq:1minusandqoveralphap}
    1-\frac{1-(p+q)/\alpha}{1-p} \ge \frac{q}{\alpha-p},
\end{equation}
which is equivalent to $h(\alpha):=p\alpha^2-(1+p+q)p\alpha+p(p+q)\le 0$. Note that
\begin{equation*}
    h(p+q) = 0,~~ h(1) = 0.
\end{equation*}
Let $\gamma=\frac{1-\alpha}{1-p-q}$ which satisfies $\gamma\in[0,1]$ due to $\alpha\in [p+q, 1]$. By the convexity of $h(\alpha)$, we have
\begin{equation*}
\begin{split}
    h(\alpha) &= h(\gamma(p+q)+(1-\gamma))\\
    &\le \gamma h(p+q)+(1-\gamma)h(1)\\
    &= 0,
\end{split}
\end{equation*}
which further indicates \eqref{eq:1minusandqoveralphap} and \eqref{eq:importanteq}.

\textbf{Step 3}: By \eqref{eq:importanteq} with $\alpha=\rho$, we obtain \eqref{eq:importanteqrho}. Then, by the equivalence between \eqref{eq:keyequationlinear} and \eqref{eq:importanteq}, the equation \eqref{eq:keyequationlinear} holds, which concludes the proof.

\end{proof}

Since $\tau(k)\le \bar{\tau}$, \eqref{eq:boundedcontractionLyap} yields \eqref{eq:Vkseq} with $p=1-\frac{1}{m}$ and $q=\frac{c^2}{m}$. Then by Lemma \ref{lemma:sequencelemma}, we obtain \eqref{eq:boundedgenerallinear}. This completes the proof.

\textbf{Remark on Lemma \ref{lemma:sequencelemma}}: The linear rate \eqref{eq:boundedgenerallinear} in Lemma \ref{lemma:sequencelemma} significantly improves the existing rate in \cite{Feyzmahdavian21} in the sense of tightness. Specifically, \cite{Feyzmahdavian21} proves that for any non-negative sequence $\{V(k)\}$ satisfying \eqref{eq:Vkseq}, \eqref{eq:sequenceconv} holds with
\begin{equation}\label{eq:hamidrho}
    \rho = (p+q)^{\frac{1}{\bar{\tau}+1}}.
\end{equation}
To distinguish $\rho$ in \eqref{eq:smallrho} and \eqref{eq:hamidrho}, we denote the former as $\rho_1$ and the latter as $\rho_2$. Since $p+q\in (0,1)$ and $\frac{1}{1+(1-p)\bar{\tau}}\ge \frac{1}{1+\bar{\tau}}$, it always holds that $\rho_1\le \rho_2$. The difference between $\rho_1$ and $\rho_2$ becomes clear when we look at their resulting iteration complexities, where a smaller iteration complexity indicates a tighter convergence rate bound. To guarantee $V(k)\le \epsilon$ for some $\epsilon>0$, \eqref{eq:sequenceconv} with $\rho=\rho_1$ and $\rho=\rho_2$ requires
\begin{align*}
K_1(\epsilon) &= \frac{(1+(1-p)\bar{\tau})\ln \frac{V(0)}{\epsilon}}{\ln \frac{1}{p+q}},\\
K_2(\epsilon) &= \frac{(1+\bar{\tau})\ln \frac{V(0)}{\epsilon}}{\ln \frac{1}{p+q}},
\end{align*}
respectively. Here, we can see that 
\begin{equation*}
    \frac{K_1(\epsilon)}{K_2(\epsilon)} = \frac{1+(1-p)\bar{\tau}}{1+\bar{\tau}},
\end{equation*}
which can be very small when $p$ is close to $1$ and $\bar{\tau}$ is large. For example, for equation \eqref{eq:boundedcontractionLyap} which yields \eqref{eq:Vkseq} with $p=1-\frac{1}{m}$, if we let $\bar{\tau}=m=20$, then
\begin{equation*}
    \frac{K_1(\epsilon)}{K_2(\epsilon)} = \frac{1+\bar{\tau}/m}{1+\bar{\tau}} = \frac{2}{21},
\end{equation*}
indicating that the rate \eqref{eq:sequenceconv} yield by $\rho=\rho_1$ is much tighter than that yield by $\rho=\rho_2$.

We also visualize the tightness of the two bounds by considering the convergence of a concrete sequence $\{V(k)\}$: $V(0)=1$. At each $k\in\N_0$, \eqref{eq:boundedcontractionLyap} holds with equality, where $m=\bar{\tau}=20$, $c=0.8$, and $\tau(k) = \min(k,\bar{\tau})$. For this example, \eqref{eq:Vkseq} holds with $p=1-\frac{1}{m}=0.95$ and $q=c^2/m=0.032$.

The convergence of $\{V(k)\}$ and its theoretical two bounds are displayed in Figure \ref{fig:tightness}. By \ref{fig:tightness} we can see that the rate bound in Lemma \ref{lemma:sequencelemma} is close to the practical convergence of $\{V(k)\}$ and is much tighter than the rate bound in \cite{Feyzmahdavian21}.

\begin{figure*}\label{fig:tightness}
    \centering
    \includegraphics[scale=0.9]{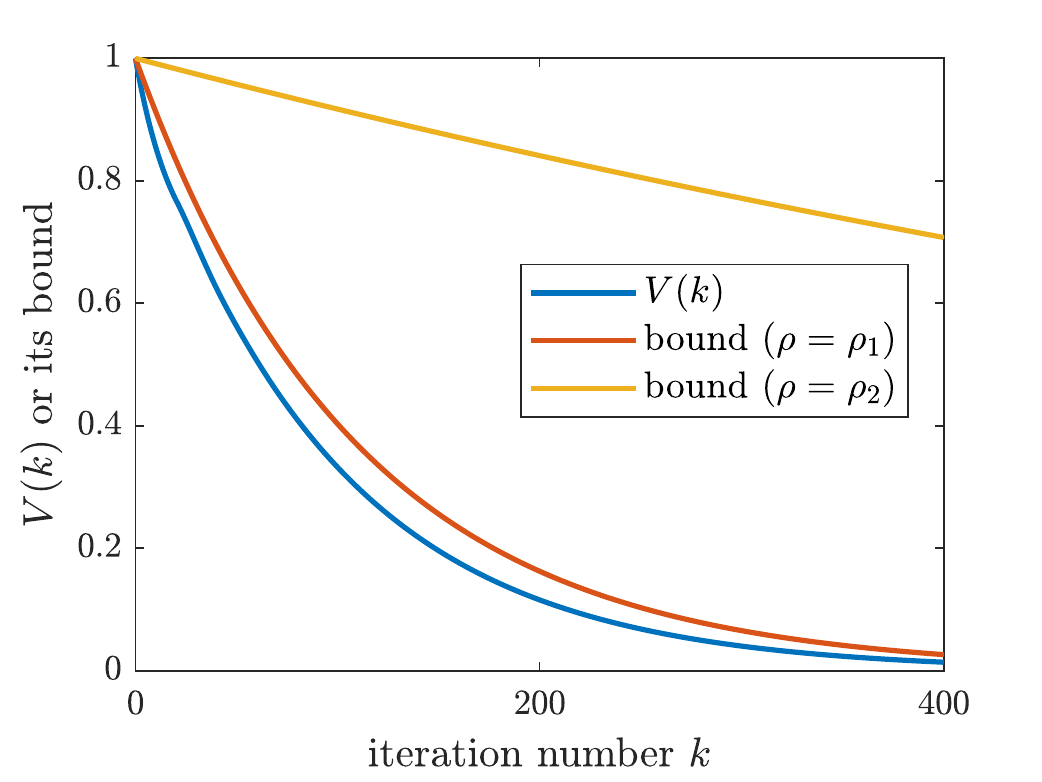}
    \caption{Tightness of rate bounds in Lemma \ref{lemma:sequencelemma} ($\rho=\rho_1$) and \cite{Feyzmahdavian21} ($\rho=\rho_2$).}
\end{figure*}
\section{Proof of \eqref{eq:rhoinrem} in Remark \ref{rem:ARockratecom}}\label{appendix:proofofrem}

The proof mainly uses Bernoulli's inequality: For any $a\in(0,1)$ and $b\in(0,1]$,
\begin{equation}\label{eq:abeq}
    a^b \le 1-b(1-a).
\end{equation}
Letting $a=1-\frac{1-c^2}{m} = \rho_a^{1+\bar{\tau}/m}$ and $b=\frac{1}{1+6\left(\frac{\bar{\tau}}{m}+\sqrt{\frac{\bar{\tau}}{m}}\right)}$ in \eqref{eq:abeq}, we have \eqref{eq:rhoinrem}.

\section{Proof of Theorem \ref{thm:adaptbounded}}\label{append:proofofthmbound}

We use the following lemma to prove the result.
\begin{lemma}\label{lemma:randomsequencelemma}
Suppose that the following holds for a non-negative sequence $\{V(k)\}$ and some non-negative constants $\sigma_i\in [0,1)$, $0\le i\le \bar{\tau}$ satisfying $\sigma_0<\sum_{i=0}^{\bar{\tau}} \sigma_i<1$:
\begin{equation}\label{eq:randomVkseq}
    V(k+1)\le \sum_{i=0}^{\bar{\tau}} \sigma_iV(k-i).
\end{equation}
Then,
\begin{equation}\label{eq:randomsequenceconv}
    V(k)\le \rho^kV(0),~\forall k\in\N_0,
\end{equation}
where
\begin{equation}\label{eq:delayadaptiverho}
\rho = \alpha' b+(1-\alpha')a.
\end{equation}
Here, $a=\sum_{i=0}^{\bar{\tau}} \sigma_i$, $b$ can be any scalar in $[b',1]$ where $b'=a^{\frac{1}{1+(1-\sigma_0)\bar{\tau}}}$, and
\begin{equation*}
    \alpha'=\frac{1}{1+\frac{b- \sum_{i=0}^{\bar{\tau}} \sigma_ib^{-i}}{\sum_{i=0}^{\bar{\tau}} \sigma_ia^{-i}-a}}\in[0,1].
\end{equation*}
In addition, $a-\sum_{i=0}^{\bar{\tau}} \sigma_ia^{-i}<0$ and $b- \sum_{i=0}^{\bar{\tau}} \sigma_ib^{-i}\ge 0$.
\end{lemma}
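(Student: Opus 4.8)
The plan is to prove \eqref{eq:randomsequenceconv} by induction on $k$, mirroring the proof of Lemma~\ref{lemma:sequencelemma}. The base case $k=0$ is immediate, and assuming $V(\ell)\le\rho^\ell V(0)$ for all $\ell\le k$, the recursion \eqref{eq:randomVkseq} yields
\begin{equation*}
    V(k+1)\le\sum_{i=0}^{\bar\tau}\sigma_i V(k-i)\le V(0)\,\rho^{k}\sum_{i=0}^{\bar\tau}\sigma_i\rho^{-i},
\end{equation*}
where the boundary terms with $k-i<0$ are handled by the convention $V(j)=V(0)$ for $j<0$ (so that $V(k-i)\le\rho^{k-i}V(0)$ still holds, since $\rho\le 1$ forces $\rho^{k-i}\ge 1$). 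Hence the entire lemma reduces to the single key inequality
\begin{equation}\label{eq:keyrandom}
    \sum_{i=0}^{\bar\tau}\sigma_i\rho^{-i}\le\rho,
\end{equation}
for the particular $\rho$ in \eqref{eq:delayadaptiverho}.

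To attack \eqref{eq:keyrandom}, I would introduce the auxiliary function $\psi(t)=t-\sum_{i=0}^{\bar\tau}\sigma_i t^{-i}$ on $(0,\infty)$, so that \eqref{eq:keyrandom} is exactly $\psi(\rho)\ge 0$, and record three facts. First, $\psi''(t)=-\sum_{i=1}^{\bar\tau}i(i+1)\sigma_i t^{-i-2}\le 0$, so $\psi$ is \emph{concave}. Second, $\psi(a)<0$: since $a=\sum_i\sigma_i\in(0,1)$ we have $a^{-i}\ge 1$, whence $\sum_i\sigma_i a^{-i}-a=\sum_{i=1}^{\bar\tau}\sigma_i(a^{-i}-1)>0$, the strict sign coming from $\sum_{i\ge 1}\sigma_i=a-\sigma_0>0$ (the hypothesis $\sigma_0<\sum_i\sigma_i$) together with $a<1$. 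Third, $\psi(b)\ge 0$ for every $b\in[b',1]$; this is the technical crux, addressed below. Note also that $a\le b'\le b$, because $a\in(0,1)$ raised to the exponent $\tfrac{1}{1+(1-\sigma_0)\bar\tau}\le 1$ gives $b'\ge a$.

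Granting these facts, the conceptual heart of the argument is that the definition of $\alpha'$ and $\rho$ is precisely calibrated so that the \emph{chord} of $\psi$ through $\bigl(a,\psi(a)\bigr)$ and $\bigl(b,\psi(b)\bigr)$ vanishes at $\rho$. Substituting $\sum_i\sigma_i a^{-i}-a=-\psi(a)>0$ and $b-\sum_i\sigma_i b^{-i}=\psi(b)\ge 0$ into \eqref{eq:delayadaptiverho} shows $\alpha'=\frac{-\psi(a)}{\psi(b)-\psi(a)}\in(0,1]$ and $\rho=\alpha'b+(1-\alpha')a\in[a,b]$, and then a direct computation gives $\alpha'\psi(b)+(1-\alpha')\psi(a)=0$. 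Since $\psi$ is concave it lies above its chord on $[a,b]$, so $\psi(\rho)\ge\alpha'\psi(b)+(1-\alpha')\psi(a)=0$, which is exactly \eqref{eq:keyrandom}.

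The remaining obstacle, and the step I expect to be the most delicate, is the third fact $\psi(b)\ge 0$. Here I would reuse Lemma~\ref{lemma:sequencelemma}: bounding $\sum_{i=0}^{\bar\tau}\sigma_i V(k-i)\le\sigma_0 V(k)+(a-\sigma_0)\max_{\ell}V(\ell)$ shows that \eqref{eq:randomVkseq} implies \eqref{eq:Vkseq} with $p=\sigma_0$ and $q=a-\sigma_0$, for which \eqref{eq:smallrho} produces exactly the rate $b'=a^{1/(1+(1-\sigma_0)\bar\tau)}$ and \eqref{eq:keyequationlinear} gives $\sigma_0+(a-\sigma_0)(b')^{-\bar\tau}\le b'$. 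Using $0<b'\le 1$, so that $(b')^{-i}\le(b')^{-\bar\tau}$ for $i\le\bar\tau$,
\begin{equation*}
    \sum_{i=0}^{\bar\tau}\sigma_i(b')^{-i}\le\sigma_0+(b')^{-\bar\tau}\sum_{i=1}^{\bar\tau}\sigma_i=\sigma_0+(a-\sigma_0)(b')^{-\bar\tau}\le b',
\end{equation*}
that is, $\psi(b')\ge 0$. Combined with $\psi(1)=1-a>0$ and the concavity of $\psi$, this forces $\psi\ge 0$ throughout $[b',1]$, in particular $\psi(b)\ge 0$. Thus the genuinely new work beyond Lemma~\ref{lemma:sequencelemma} is confined to the concavity-plus-interpolation step, while the endpoint estimate $\psi(b')\ge 0$ is inherited from the coarser max-based bound.
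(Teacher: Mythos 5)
Your proposal is correct and follows essentially the same route as the paper's proof: the same auxiliary function ($\psi$ is the paper's $h$), the same reduction of the induction step to $\sum_i\sigma_i\rho^{-i}\le\rho$, the same appeal to the key inequality of Lemma~\ref{lemma:sequencelemma} with $p=\sigma_0$, $q=a-\sigma_0$ to get $\psi(b')\ge 0$, and the same concavity-of-the-chord argument showing $\psi(\rho)\ge\alpha'\psi(b)+(1-\alpha')\psi(a)=0$. The only cosmetic differences are that you extend $\psi\ge 0$ from $b'$ to $b$ via concavity and $\psi(1)>0$ where the paper uses monotonicity of $h$, and that you explicitly handle the boundary terms $k-i<0$, which the paper leaves implicit.
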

\begin{proof}
Define $h(s) = s-\sum_{i=0}^{\bar{\tau}} \sigma_is^{-i}$. Then,
\begin{equation}\label{eq:alphaprime}
    \alpha' = \frac{1}{1-h(b)/h(a)}.
\end{equation}
We first show that {$h(a)<0$} and $h(b)\ge 0$. Since $\sigma_0<\sum_{i=0}^{\bar{\tau}} \sigma_i$ and $a\in(0,1)$, we have $\sum_{i=1}^{\bar{\tau}} \sigma_i>0$ and $\sum_{i=1}^{\bar{\tau}} \sigma_i(1-a^{-i})<0$. Therefore,
\begin{align*}
    h(a) &= a-\sum_{i=0}^{\bar{\tau}} \sigma_ia^{-i}\\
    &= \sum_{i=0}^{\bar{\tau}} \sigma_i-\sum_{i=0}^{\bar{\tau}} \sigma_ia^{-i}\\
    &=\sum_{i=0}^{\bar{\tau}} \sigma_i(1-a^{-i})\\
    &=\sum_{i=1}^{\bar{\tau}} \sigma_i(1-a^{-i})\\
    &<0.
\end{align*}
Moreover, by $b'\in (0,1)$,
\begin{equation}\label{eq:hblb}
\begin{split}
    h(b') &= b'-\sum_{i=0}^{\bar{\tau}} \sigma_ib'^{-i}\\
    &= b'-\sigma_0-\sum_{i=1}^{\bar{\tau}} \sigma_i b'^{-i}\\
    &\ge b'-\sigma_0-\sum_{i=1}^{\bar{\tau}} \sigma_ib'^{-\bar{\tau}}\\
    &=b'-\sigma_0-(a-\sigma_0)b'^{-\bar{\tau}}.
\end{split}
\end{equation}
In addition, note that \eqref{eq:rhotaulowerbound} holds for $\rho$ in \eqref{eq:smallrho}. Then by letting $p=\sigma_0$ and $q=a-\sigma_0$ in \eqref{eq:rhotaulowerbound}, we have
\begin{equation*}
\begin{split}
    b'^{-\bar{\tau}} &\le\frac{b'-\sigma_0}{a-\sigma_0},
\end{split}
\end{equation*}
substituting which into \eqref{eq:hblb} gives $h(b')\ge 0$. In addition, $b\ge b'$ and $h(s)$ is an increasing function when $s>0$. Then we have $h(b)\ge h(b')\ge 0$.

Next, we prove \eqref{eq:randomsequenceconv} by induction.
Clearly, \eqref{eq:randomsequenceconv} holds for $k=0$. Suppose that \eqref{eq:randomsequenceconv} holds for all $k\in[0,k'-1]$ for some $k'\in\N$. Then, by \eqref{eq:randomVkseq},
\begin{equation*}
    V(k')\le \sum_{i=0}^{\bar{\tau}} \sigma_i\rho^{k'-i-1}V(0).
\end{equation*}
To prove \eqref{eq:randomsequenceconv} with $k=k'$, it suffices to show
\begin{equation*}
    \sum_{i=0}^{\bar{\tau}} \sigma_i\rho^{k'-i-1}\le \rho^{k'},
\end{equation*}
or equivalently,
\begin{equation}\label{eq:randrhoeq}
    \sum_{i=0}^{\bar{\tau}} \sigma_i\rho^{-i}\le \rho,
\end{equation}
which is equivalent to $h(\rho)\ge 0$. By $h(a)<0$, $h(b)\ge 0$, and \eqref{eq:alphaprime}, we have $\alpha'\in (0,1]$. Then, since $h$ is concave on the non-negative domain, we have
\begin{equation*}
\begin{split}
    h(\rho)&=h(\alpha' b+(1-\alpha')a)\\
    &\ge \alpha' h(b)+(1-\alpha')h(a)\\
    &=0,
\end{split}
\end{equation*}
which further yields \eqref{eq:randrhoeq} and also \eqref{eq:randomsequenceconv} with $k=k'$.
Following the induction procedure, we obtain \eqref{eq:randomsequenceconv} for all $k\in\N_0$.
\end{proof}

Taking expectation on both sides of \eqref{eq:boundedcontractionLyap} with respect to the delay $\tau(k)$ and using Assumption \ref{asm:stochasticdelay}, we have
\begin{equation*}
\begin{split}
    \mathbb{E}[V(k+1)]&\le \left(\frac{c^2P_0}{m}+1-\frac{1}{m}\right)V(k)+\sum_{i=1}^{\bar{\tau}}\frac{c^2P_i}{m} V(k-i).
\end{split}
\end{equation*}
Now we use Lemma \ref{lemma:randomsequencelemma} to derive the convergence of $\mathbb{E}[V(k)]$. Specifically, in Lemma \ref{lemma:randomsequencelemma} we let $\sigma_0=1-\frac{1}{m}+\frac{c^2P_0}{m}$, $\sigma_i=\frac{c^2P_i}{m}$ for all $i\ne 0$, and $b=\rho_a$, so that $a=\rho_c$ and $b'=\rho_c^{\frac{1}{1+\frac{(1-c^2P_0)\bar{\tau}}{m}}}\le \rho_c^{\frac{1}{1+\bar{\tau}/m}} = \rho_a$. Then by Lemma \ref{lemma:randomsequencelemma}, we obtain \eqref{eq:adapt}. Also by Lemma \ref{lemma:randomsequencelemma},
\begin{equation}\label{eq:phirhopositivenegative}
    \phi(\rho_c)<0,~~\phi(\rho_a)\ge 0.
\end{equation}


\section{Proof of Proposition \ref{rem:stochasticdominance}}\label{append:proofofremarkstochasticdominance}
For simplicity, we use $\text{MEAN}(\cdot)$ and $\text{VAR}(\cdot)$ to denote the mean value and variance of probability distributions, respectively. 

Letting $n=\bar{\tau}+1$, $x_i=i-1$, $\phi(x)=x$, $\alpha_i=P_{i-1}$, and $\beta_i=P_{i-1}'$ in Theorem 1 in \cite{hadar1969rules}, we have
\begin{equation*}
\begin{split}
    \text{MEAN}(\mc{P})&=\sum_{j=0}^{\bar{\tau}} jP_j\\
    &\le \sum_{j=0}^{\bar{\tau}} jP_j'\\
    &=\text{MEAN}(\mc{P}'),
\end{split}
\end{equation*}
\emph{i.e.}, $\mc{P}$ yields smaller average delay.

If, in addition, $\text{MEAN}(\mc{P})=\text{MEAN}(\mc{P}')$. Then by letting $n=\bar{\tau}+1$, $x_i=i-1$, $\phi(x)=x^2$,  $\alpha_i=P_{i-1}$, and $\beta_i=P_{i-1}'$ in Theorem 1 in \cite{hadar1969rules}, we have
\begin{equation}\label{eq:hadarvariance}
\begin{split}
    \sum_{j=0}^{\bar{\tau}} j^2P_j\le \sum_{j=0}^{\bar{\tau}} j^2P_j'.
\end{split}
\end{equation}
In addition, $\text{MEAN}(\mc{P})=\text{MEAN}(\mc{P}')$, which, together with \eqref{eq:hadarvariance}, yields
\begin{equation*}
\begin{split}
    \text{VAR}(\mc{P})&=\sum_{j=0}^{\bar{\tau}} j^2P_j-(\text{MEAN}(\mc{P}))^2\\
    &\le \sum_{j=0}^{\bar{\tau}} j^2P_j'-(\text{MEAN}(\mc{P}'))^2\\
    &=\text{VAR}(\mc{P}'),
\end{split}
\end{equation*}
\emph{i.e.}, $\mc{P}$ yields smaller variance.

\section{Proof of Lemma \ref{lemma:stochasticdominance}}\label{append:proofoflemmastochasticdominance}

When the delay distribution is specialized to $\mc{P}$ and $\mc{P}'$, the function $\phi$ defined in Theorem \ref{thm:adaptbounded} becomes 
\begin{align*}
    &\text{Distribution } \mc{P}:~~\phi(\rho):=\rho-\rho_c-\frac{c^2}{m}(\sum_{i=0}^{\bar{\tau}}P_i\rho^{-i}-1),\\
    &\text{Distribution } \mc{P}':~~\phi'(\rho):=\rho-\rho_c-\frac{c^2}{m}(\sum_{i=0}^{\bar{\tau}}P_i'\rho^{-i}-1).
\end{align*}

To prove the result, we first show that if $\mc{P}\succeq_1 \mc{P}'$, then for any $\rho\in(0,1)$,
\begin{equation}\label{eq:phirhocomparison}
    \phi(\rho)\ge \phi'(\rho).
\end{equation}
By the definitions of $\phi(\rho)$ and $\phi'(\rho)$, \eqref{eq:phirhocomparison} holds if
\begin{equation}\label{eq:rhominusi}
    \sum_{i=0}^{\bar{\tau}}P_i \rho^{-i}\le \sum_{i=0}^{\bar{\tau}}P_i' \rho^{-i}.
\end{equation}
Letting $n=\bar{\tau}+1$, $x_i=i-1$, $\phi(x)=\rho^{-x}$,  $\alpha_i=P_{i-1}$, and $\beta_i=P_{i-1}'$ in Theorem 1 in \cite{hadar1969rules}, we have that if $\mc{P}\succeq_1\mc{P}'$, then \eqref{eq:rhominusi} holds, which further guarantees \eqref{eq:phirhocomparison}.

By \eqref{eq:phirhocomparison} and \eqref{eq:phirhopositivenegative}, we have
\begin{align*}
    &\phi(\rho_a)\overset{\eqref{eq:phirhocomparison}}{\ge} \phi'(\rho_a)\overset{\eqref{eq:phirhopositivenegative}}{\ge} 0,\\
    &\phi'(\rho_c)\overset{\eqref{eq:phirhocomparison}}{\le} \phi(\rho_c)\overset{\eqref{eq:phirhopositivenegative}}{<}0,
\end{align*}
which implies 
\begin{equation*}
    -\phi(\rho_a)/\phi(\rho_c)\ge -\phi'(\rho_a)/\phi'(\rho_c).
\end{equation*}
Then by \eqref{eq:alphavalue}, $\alpha_{\mc{P}}\le \alpha_{\mc{P}'}$. Moreover, $\rho_a\ge \rho_c$, $\rho_{\mc{P}}=\alpha_{\mc{P}}\rho_a+(1-\alpha_{\mc{P}})\rho_c$, and $\rho_{\mc{P}'}=\alpha_{\mc{P}'}\rho_a+(1-\alpha_{\mc{P}'})\rho_c$. Therefore, $\rho_{\mc{P}}\le \rho_{\mc{P}'}$. Completes the proof.

\section{Proof of Theorem \ref{thm:general}}\label{appendix:proofthmgeneral}


We first define all the notations that will be used in the proof. We define the index sequence $\{\mc{I}(t)\}_{t\in\N_0}$ as: $\mc{I}(0)=0$ and for each $t\in\N$,
\begin{equation}\label{eq:defIt}
    \mathcal{I}(t) = \min\{k': k-\tau(k)\ge \mathcal{I}(t-1)~\text{for all }k\ge k'\}+1,
\end{equation}
and let
\begin{equation*}
    a(t):=\operatorname{\arg\;\max}_{k\in [\mathcal{I}(t), \mathcal{I}(t+1))} V(k)
\end{equation*}
Moreover, we use the same definitions of $V(k)$ and $W(k)$ as in Appendix \ref{appendix:proofthmboundegeneral}: \begin{align*}
    V(k)&=\mathbb{E}[\|\bx(k)-\bx^\star\|^2],\\
    W(k)&=\frac{1-\alpha}{m\alpha}\mathbb{E}[\|(\operatorname{Id}-\T)(\bx(k))\|^2].
\end{align*}
To understand the sequence $\mc{I}(t)$, note that for each $t\in \mathbb{N}$, by the definition in \eqref{eq:defIt},
\begin{equation}\label{eq:kminustauk}
    k-1-\tau(k-1)\ge \mathcal{I}(t-1),~\forall k\ge \mathcal{I}(t).
\end{equation}
Hence, $\{\mathcal{I}(t)\}$ defines the following \emph{Markov} property for the update \eqref{eq:Alg1formal}: \emph{For each $t\in\mathbb{N}$, all the iterates $\bx(k)$, $k\ge \mathcal{I}(t)$ are determined by $\bx(k)$, $k\in [\mathcal{I}(t-1), \mathcal{I}(t))$ and do not rely on earlier iterates.}
Under Assumption \ref{asm:parallelasynchrony}, the sequence $\{\mathcal{I}(t)\}$ is well defined because given $\mc{I}(t)$, there always exists $k'$ such that $k-\tau(k)\ge \mc{I}(t)$ for all $k\ge k'$.

The remaining proof includes three steps. Step 1 derives that for any $t\in\N$ and $k\in [\mathcal{I}(t), \mathcal{I}(t+1))$,
\begin{equation}\label{eq:Vmaxkmonotone}
    V(k)\le \max_{\ell\in [\mathcal{I}(t-1), \mathcal{I}(t))} V(\ell).
\end{equation}
Base on step 1, step 2 proves
\begin{equation}\label{eq:successivevatotal}
\begin{split}
    V(a(t))\le V(a(t-1))- W(a(t)-1-\tau(a(t)-1)),
\end{split}
\end{equation}
which is further used in step 3 to show the result.

\textbf{Step 1}: By \eqref{eq:oldboundedLyapunovfirm},
\begin{equation}\label{eq:Lyapunovfirm}
	\begin{split}
		V(k+1) \le& \max_{k-\tau(k)\le t\le k} V(t) - W(k-\tau(k)).
	\end{split}
\end{equation}
With \eqref{eq:Lyapunovfirm}, one can prove \eqref{eq:Vmaxkmonotone} by induction. To this end, note from \eqref{eq:kminustauk} that $k-1-\tau(k-1)\ge \mc{I}(t-1)$ for any $k\in [\mathcal{I}(t), \mathcal{I}(t+1))$. Then, by \eqref{eq:Lyapunovfirm} with $k=\mc{I}(t)-1$, the equation \eqref{eq:Vmaxkmonotone} holds naturally for $k=\mathcal{I}(t)$. Suppose that \eqref{eq:Vmaxkmonotone} holds for all $k\in [\mc{I}(t), k']$ for some $k'\in [\mathcal{I}(t), \mathcal{I}(t+1)-1)$. Then, by \eqref{eq:kminustauk} we have $k'-\tau(k')\in [\mathcal{I}(t-1), k']$, which, together with \eqref{eq:Lyapunovfirm} at $k=k'$, yields \eqref{eq:Vmaxkmonotone} at $k=k'+1$. Following this induction procedure we derived \eqref{eq:Vmaxkmonotone} for all $k\in [\mathcal{I}(t), \mathcal{I}(t+1))$.

\textbf{Step 2}: By \eqref{eq:kminustauk},
\begin{align*}
    & a(t)-1-\tau(a(t)-1) \in [\mc{I}(t-1), \mc{I}(t+1)),\\
    & a(t)-1\in [\mc{I}(t-1), \mc{I}(t+1))
\end{align*}
and therefore
\begin{equation*}
    [a(t)-1-\tau(a(t)-1), a(t)-1]\subseteq [\mc{I}(t-1), \mc{I}(t+1)).
\end{equation*}
Substituting the above equation into \eqref{eq:Lyapunovfirm} at $k=a(t)-1$ gives
\begin{equation}\label{eq:Vat}
        V(a(t))\le \max_{\ell\in [\mc{I}(t-1), \mc{I}(t+1))}V(\ell)- W(a(t)-1-\tau(a(t)-1)).
\end{equation}
In addition, by maximizing the left-hand side of \eqref{eq:Vmaxkmonotone} over $k\in [\mc{I}(t-1), \mc{I}(t+1))$, we have
\begin{equation}\label{eq:Itminus1Itplus1}
    \max_{\ell\in [\mc{I}(t-1), \mc{I}(t+1))}V(\ell)\le \max_{\ell\in [\mathcal{I}(t-1), \mathcal{I}(t))} V(\ell)
\end{equation}
and therefore,
\begin{equation}\label{eq:maxvequatovat1}
\begin{split}
        \max_{\ell\in [\mc{I}(t-1), \mc{I}(t+1))}V(\ell) &= \max_{\ell\in [\mathcal{I}(t-1), \mathcal{I}(t))} V(\ell)\\
        &= V(a(t-1)).
\end{split}
\end{equation}
Substituting \eqref{eq:maxvequatovat1} into \eqref{eq:Vat}, we have \eqref{eq:successivevatotal}.

\textbf{Step 3}: Adding \eqref{eq:successivevatotal} from $t=1$ to $t=+\infty$ gives
\begin{equation*}
    \begin{split}
        \sum_{t=1}^\infty W(a(t)-1-\tau(a(t)-1))&\le V(a(0))=V(0),
    \end{split}
\end{equation*}
where $V(a(0))=V(0)$ can be easily derived from \eqref{eq:boundedLyapunovfirm}. Therefore, $\lim_{t\rightarrow +\infty} W(a(t)-1-\tau(a(t)-1))=0$, which completes the proof.

\section{Proof of Theorem \ref{thm:unbounded}}\label{sec:convanasublineargrow}

We use the same definitions of $V(k)$, $W(k)$, and $\mc{I}(t)$ as in Appendix \ref{appendix:proofthmgeneral} and let $q=1-\frac{1-c^2}{m}$. We first show that for any $t\in\N_0$ and $k\in [\mc{I}(t), \mc{I}(t+1))$,
\begin{equation}\label{eq:Vsublinearuboundedproof}
\begin{split}
        &V(k)\le q^tV(0).
\end{split}
\end{equation}
Subsequently, we prove that for any $k\in [\mc{I}(t), \mc{I}(t+1))$,
\begin{equation}\label{eq:tboundbyk}
        t\ge \begin{cases}
            \Theta(k^{1-\beta}), & \beta\in(0,1),\\
            \Theta(\ln k) & \beta=1.
        \end{cases}
\end{equation}
Combining the above two equations yields the result.

\textbf{Proof of \eqref{eq:Vsublinearuboundedproof}}: Fix $k\in [\mathcal{I}(t)-1, \mathcal{I}(t+1)-1)$. We have by \eqref{eq:kminustauk} that $k-\tau(k) \in [\mathcal{I}(t-1), \mathcal{I}(t+1)-1)$. 
Moreover, since any pseudo-contractive operator is also averaged, \eqref{eq:Itminus1Itplus1} holds, so that
\begin{align*}
   V(k)&\le \max_{\ell\in[\mc{I}(t-1), \mc{I}(t))}V(\ell),\\
   V(k-\tau(k))&\le \max_{\ell\in[\mc{I}(t-1), \mc{I}(t))}V(\ell),
\end{align*}
which, together with \eqref{eq:boundedcontractionLyap}, yields
\begin{equation*}
    V(k+1)\le q\max_{\ell\in [\mathcal{I}(t-1), \mathcal{I}(t))} V(\ell).
\end{equation*}
Maximizing the left-hand side of the above equation over $k\in [\mathcal{I}(t)-1, \mathcal{I}(t+1)-1)$, we obtain
\begin{equation*}
\begin{split}
        \max_{\ell\in [\mathcal{I}(t), \mathcal{I}(t+1))} V(\ell)&\le q\max_{\ell\in [\mathcal{I}(t-1), \mathcal{I}(t))} V(\ell)\\
    &\le q^t\max_{\ell\in [\mathcal{I}(0), \mathcal{I}(1))}V(\ell)\\
    &=q^tV(0),
\end{split}
\end{equation*}
where the last step uses $V(k)\le V(0)$ $\forall k\in\N_0$ derived from \eqref{eq:Lyapunovfirm}. Therefore, \eqref{eq:Vsublinearuboundedproof} holds.

\textbf{Proof of \eqref{eq:tboundbyk}}:
We prove \eqref{eq:tboundbyk} by showing that for any $k\in [\mc{I}(t), \mc{I}(t+1))$,
\begin{equation}\label{eq:tlowerbound}
        t\ge \begin{cases}
            \frac{1}{a}(k+\gamma+1)^{1-\beta}-\gamma-1, & \beta\in(0,1),\\
            \frac{\ln(\eta(k+1)/(\gamma+1))}{\ln (1/(1-\eta))} & \beta=1,
        \end{cases}
\end{equation}
where $a=\eta(1-\eta)^{-\beta}+\gamma+1$. We consider two cases of $\beta$ separately.
    
    \emph{Case 1: $\beta\in(0,1)$.} By the definition of $\{\mc{I}(t)\}$ in \eqref{eq:defIt}, for each $t\in\N_0$,
    \begin{equation}\label{eq:Itsequence}
        \mc{I}(t+1) - 2 - \tau(\mc{I}(t+1) - 2)\le \mc{I}(t)-1.
    \end{equation}
    Moreover, by Assumption \ref{asm:sublineardelay},
    \begin{equation*}
    \begin{split}
        \tau(\mc{I}(t+1) - 2)&\le \eta(\mc{I}(t+1) - 2)^\beta+\gamma\\
        &\le \eta(\mc{I}(t+1) - 2)+\gamma,
    \end{split}
    \end{equation*}
    which, together with \eqref{eq:Itsequence}, yields
    \begin{equation}\label{eq:Itplus1bound}
        \mc{I}(t+1)-2\le \frac{\mc{I}(t)+\gamma-1}{1-\eta}.
    \end{equation}
    Then,
    \begin{equation*}
    \begin{split}
        \tau(\mc{I}(t+1) - 2)& \le \eta(\mc{I}(t+1) - 2)^\beta+\gamma\\
        &\le \eta\left(\frac{\mc{I}(t)+\gamma-1}{1-\eta}\right)^\beta+\gamma.
    \end{split}
    \end{equation*}
    Substituting the above equation into \eqref{eq:Itsequence} ensures
    \begin{equation}\label{eq:It1smallthanIt}
        \begin{split}
            \mc{I}(t+1) &\le \mc{I}(t)+\eta\left(\frac{\mc{I}(t)+\gamma-1}{1-\eta}\right)^\beta+\gamma+1\\
            \le& \mc{I}(t)+(\eta(1-\eta)^{-\beta}+\gamma+1)(\mc{I}(t)+\gamma)^\beta\\
            =& \mc{I}(t)+a(\mc{I}(t)+\gamma)^\beta,
        \end{split}
    \end{equation}
    where the second step uses $\gamma+1\le (\gamma+1)\gamma^\beta\le (\gamma+1)(\mc{I}(t)+\gamma)^\beta$derived from $\gamma\ge 1$.
    Based on \eqref{eq:It1smallthanIt}, by induction we will show
    \begin{equation}\label{eq:Itupperbound}
        \mc{I}(t) \le (a(t+\gamma))^{\frac{1}{1-\beta}}-\gamma,~\forall t\in\N_0.
    \end{equation}
    When $t=0$, \eqref{eq:Itupperbound} holds because $\gamma\ge 1$, $a\ge 1$, and $\beta\in(0,1)$. Suppose \eqref{eq:Itupperbound} holds for some $t\in\N_0$. Then, by \eqref{eq:It1smallthanIt} and \eqref{eq:Itupperbound} we have
    \begin{equation}
        \begin{split}
            \mc{I}(t+1)&\le (a(t+\gamma))^{\frac{1}{1-\beta}}-\gamma+a(a(t+\gamma))^{\frac{\beta}{1-\beta}}\\
            &=(a(t+\gamma))^{\frac{\beta}{1-\beta}}(a(t+\gamma+1))-\gamma\\
            &\le (a(t+\gamma+1))^{\frac{1}{1-\beta}}-\gamma,
        \end{split}
    \end{equation}
    \emph{i.e.}, \eqref{eq:Itupperbound} holds for $t+1$. Following the induction procedure we have that \eqref{eq:Itupperbound} holds for all $t\in\N_0$. Then, for any $k\in [\mc{I}(t), \mc{I}(t+1))$, we have
    \begin{equation*}
        k\le \mc{I}(t+1)-1 \le (a(t+\gamma+1))^{\frac{1}{1-\beta}}-\gamma-1,
    \end{equation*}
    which further implies \eqref{eq:tlowerbound} with $\beta\in(0,1)$.

    \emph{Case 2: $\beta=1$.} Adding $\frac{\gamma+1}{\eta}$ to both sides of \eqref{eq:Itplus1bound} ensures
    \begin{equation*}
    \begin{split}
        \mc{I}(t+1)+\frac{\gamma+1}{\eta}-2&\le \frac{\mc{I}(t)+\frac{\gamma+1}{\eta}-2}{1-\eta},
    \end{split}
    \end{equation*}
    which further yields
    \begin{equation*}
    \begin{split}
        \mc{I}(t+1)+\frac{\gamma+1}{\eta}-2&\le (1-\eta)^{-(t+1)}(\mc{I}(0)+\frac{\gamma+1}{\eta}-2)\\
        &= (\frac{\gamma+1}{\eta}-2)(1-\eta)^{-(t+1)}\\
        &\le \frac{\gamma+1}{\eta}(1-\eta)^{-(t+1)}.
    \end{split}
    \end{equation*}
    Then, for any $k\in [\mc{I}(t), \mc{I}(t+1))$,
    \begin{equation*}
        k\le \mc{I}(t+1)-1\le \frac{\gamma+1}{\eta}(1-\eta)^{-(t+1)}-1,
    \end{equation*}
    which further gives \eqref{eq:tlowerbound} with $\beta=1$. This completes the proof.

\section{Equivalence to ADMM} \label{sec:ADMMequivalence}
Running \eqref{eq:TiADMMz}--\eqref{eq:TiADMMx} with $\lambda=1$ synchronously and indexing the iterate by $k\in\N_0$ ensures
\begin{align}
    & z(k) = \frac{1}{m}\sum_{i=1}^m x_i(k),\label{eq:ourspecializationtoADMMz}\\
    & y_i(k+1) = \prox_{\gamma F_i} (2z(k)-x_i(k)),~\forall i\in[m],\\
    & x_i(k+1) = x_i(k)+y_i(k+1)-z(k),~\forall i\in[m],\label{eq:ourspecializationtoADMMx}
\end{align}
where $z_i$ in \eqref{eq:TiADMMz} is indexed by $z(k)$. ADMM for solving problem \eqref{eq:consensusprob} takes the following form \cite{zhang2014asynchronous}:
\begin{align}
    \hat{y}_i(k+1) &=  \prox_{F_i/\eta}(\hat{z}(k)-\hat{x}_i(k)/\eta),~\forall i\in [m],\label{eq:ADMMtousy}\\
    \hat{z}(k+1) &= \frac{1}{m}\sum_{i=1}^m (\hat{y}_i(k+1)+\hat{x}_i(k)/\eta),\\
    \hat{x}_i(k+1) &= \hat{x}_i(k)+\eta(\hat{y}_i(k+1)-\hat{z}(k+1)),~\forall i\in [m].\label{eq:ADMMtousx}
\end{align}
It can be verified that if $x_i(k)$, $y_i(k)$, $z_i(k)$ update according to \eqref{eq:ourspecializationtoADMMz}--\eqref{eq:ourspecializationtoADMMx}. Then, by letting $\eta = 1/\gamma$, $\hat{x}_i(k)=\eta(x_i(k)-z(k))$, $\hat{y}_i(k)=y_i(k)$, and $\hat{z}(k)=z(k)$, the updates of $\hat{x}_i(k)$, $\hat{y}_i(k)$, $\hat{z}(k)$ follow \eqref{eq:ADMMtousy}--\eqref{eq:ADMMtousx}. One important step of the verification is $\frac{1}{m}\sum_{i=1}^m \hat{x}_i(k) = \frac{\eta}{m}\sum_{i=1}^m (x_i(k) - z(k)) = \mathbf{0}$, which yields $\hat{z}(k+1) = \frac{1}{m}\sum_{i=1}^m \hat{y}_i(k+1)$.

\section{Proof of Lemma \ref{lemma:ADMMlemma}}\label{ssec:proofofADMM}
By Proposition 12.27 in \cite{bauschke2011convex}, both $\prox_{\gamma F}$ and $\prox_{\gamma \mc{I}_{\mc{C}}}$ are $1/2$-averaged (equivalent to firm non-expansiveness in Proposition 12.27 in \cite{bauschke2011convex}). Then, using Proposition 4.21 in \cite{bauschke2011convex}, the operator $\T'=\prox_{\gamma F}\circ(2\prox_{\gamma \mc{I}_{\mc{C}}}-\Id)-\prox_{\mc{I}_{\mc{C}}}+\Id$ is $1/2$-averaged. Hence, $\T'=\frac{R}{2}+\frac{\Id}{2}$ for some non-expansive operator $R$, so that
\begin{equation*}
    \T=(1-\lambda)\Id+\lambda \T'=(1-\lambda/2)\Id+(\lambda/2)R.
\end{equation*}
Therefore, $\T$ is $\lambda/2$-averaged.

\section{Proof of Lemma \ref{lemma:extendedADMM}}\label{sec:proofofextendedADMM}

By the proof of Theorem 25.8 in \cite{bauschke2011convex}, the proximal gradient operator $\prox_{\gamma r}\circ (\Id-\gamma\nabla f)$ is $1/(2\theta)$-averaged where $\theta=\frac{\min\{1,L/\gamma\}+1/2}{2}$. Therefore, there exists a non-expansive operator $R$ satisfying $\prox_{\gamma r}\circ (\Id-\gamma\nabla f) = R/(2\theta)+(1-1/(2\theta))\Id$, so that
\begin{equation*}
\begin{split}
    \T' &= \theta\prox_{\gamma r}\circ (\Id-\gamma\nabla f)+(1-\theta)\Id\\
    &= R/2+\Id/2.
\end{split}
\end{equation*}
Therefore, $\T'$ is $1/2$-averaged.

Since $\T'$ and $\prox_{\gamma\mc{I}_{\mc{Z}}}$ are $1/2$-averaged, by using Proposition 4.21 in \cite{bauschke2011convex}, the operator $\T'\circ(2\prox_{\gamma \mc{I}_{\mc{Z}}}-\Id)-\prox_{\mc{I}_{\mc{Z}}}+\Id$ is $1/2$-averaged. Then, $\T'\circ(2\prox_{\gamma \mc{I}_{\mc{Z}}}-\Id)-\prox_{\mc{I}_{\mc{Z}}}+\Id=\frac{R}{2}+\frac{\Id}{2}$ for some non-expansive operator $R$, so that
\begin{equation*}
\begin{split}
    \T&=(1-\lambda)\Id+\lambda (\T'\circ(2\prox_{\gamma \mc{I}_{\mc{Z}}}-\Id)-\prox_{\mc{I}_{\mc{Z}}}+\Id)\\
    &=(1-\lambda/2)\Id+(\lambda/2)R.
\end{split}
\end{equation*}
Therefore, $\T$ is $\lambda/2$-averaged.



\section{Delay distribution}\label{sec:delaydistribution}

To validate the phenomenon that most delays may be much smaller than the maximum delay, we plot the delay distribution generated by the experiments (theoretical parameter) in Section \ref{sec:experiment}. Specifically, for both Lasso (Figure \ref{fig:LASSOCIFAR}) and Logistic regression (Figure \ref{fig:LogisticCIFAR}), we plot the delay distribution generated by all four asynchronous algorithms.

	\begin{figure*}[!htb]
		\centering
		\caption{Delay distribution (x-axis represents delay)}
		\subfigure[Lasso]{
			\includegraphics[scale=0.75]{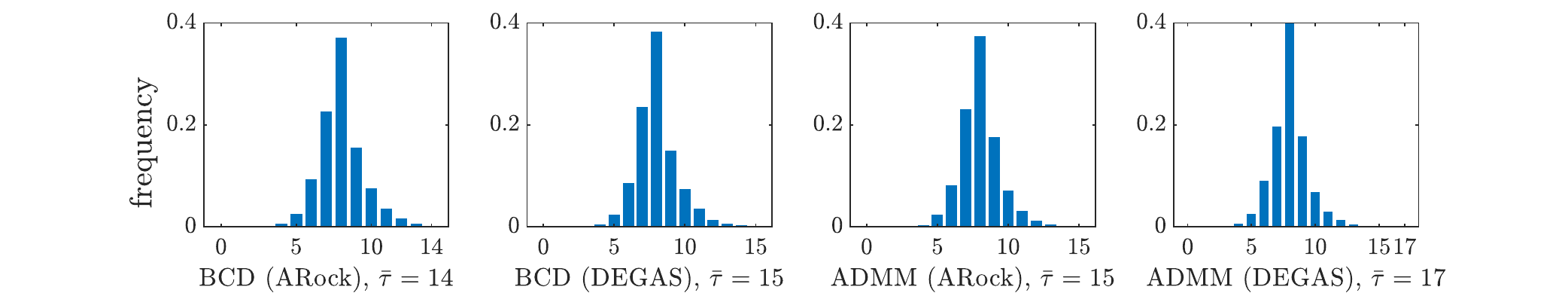}\label{fig:delaylassocifar_iter}}
		\subfigure[Logistic]{
			\includegraphics[scale=0.75]{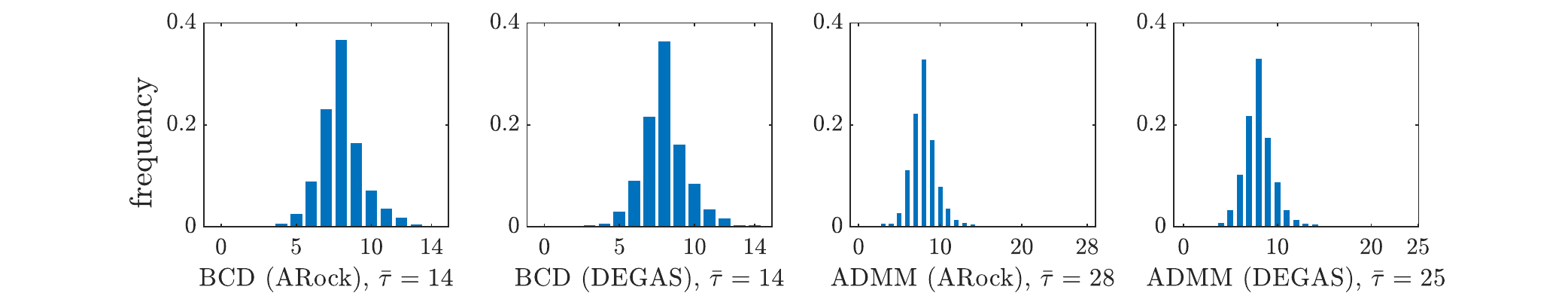}\label{fig:delaylassocifar_time}}
		\label{fig:delayLASSOCIFAR}
	\end{figure*}

Observe from Figure \ref{fig:delayLASSOCIFAR} that most delays are much smaller than the maximum delay. For example, in Figure \ref{fig:delayLASSOCIFAR} (a) BCD (ARock), the maximum delay is $14$ and over $86\%$ delays are smaller than or equal to $9$, and in Figure \ref{fig:delayLASSOCIFAR} (b) ADMM (ARock), the maximum delay is $28$ while over $97\%$ delays are smaller than or equal to $11$.







\section{Comparison in terms of running time}\label{sec:exprunningtime}

We conducted a comparison between DEGAS, ARock, and their common synchronous algorithms based on their running time. We consider two scenarios: no straggler and with straggler. In the first setting, we use the same experiment setting as in Section \ref{sec:experiment}, where each worker is a core in a 10-core machine and all the workers are homogeneous. In the second setting, we choose one worker as the straggler and let it sleep for twice its local computation time at each iteration. The "sleep" scheme for setting a straggler is standard in the literature \cite{lian2018asynchronous,luo2020prague}. However, existing works usually choose a random straggler at each iteration, while we consider the more practical setting where the straggler is fixed. We use the theoretical step-size setting in Section \ref{sec:experiment}.

The experiment results are presented in Figures \ref{fig:stragglerLasso}--\ref{fig:stragglerLogistic}. Observe from Figures \ref{fig:stragglerLasso}--\ref{fig:stragglerLogistic} that DEGAS is slower than the synchronous methods if there is no straggler, and is faster in the case of including straggler. This phenomenon is reasonable. First, without a straggler, the numbers of $\T_i$ computed by the synchronous and asynchronous methods are very close according to our observation, while the performance of asynchronous methods is degraded by the information delay. Second, when a straggler is present, the average per-iteration time consumption of the synchronous methods significantly increases, while that of the asynchronous methods only increases slightly.

\begin{figure*}[htb]
    \centering
    \caption{Convergence for solving Lasso: no straggler v.s. with straggler}
    \subfigure[no straggler]{
	\includegraphics[scale=0.4]{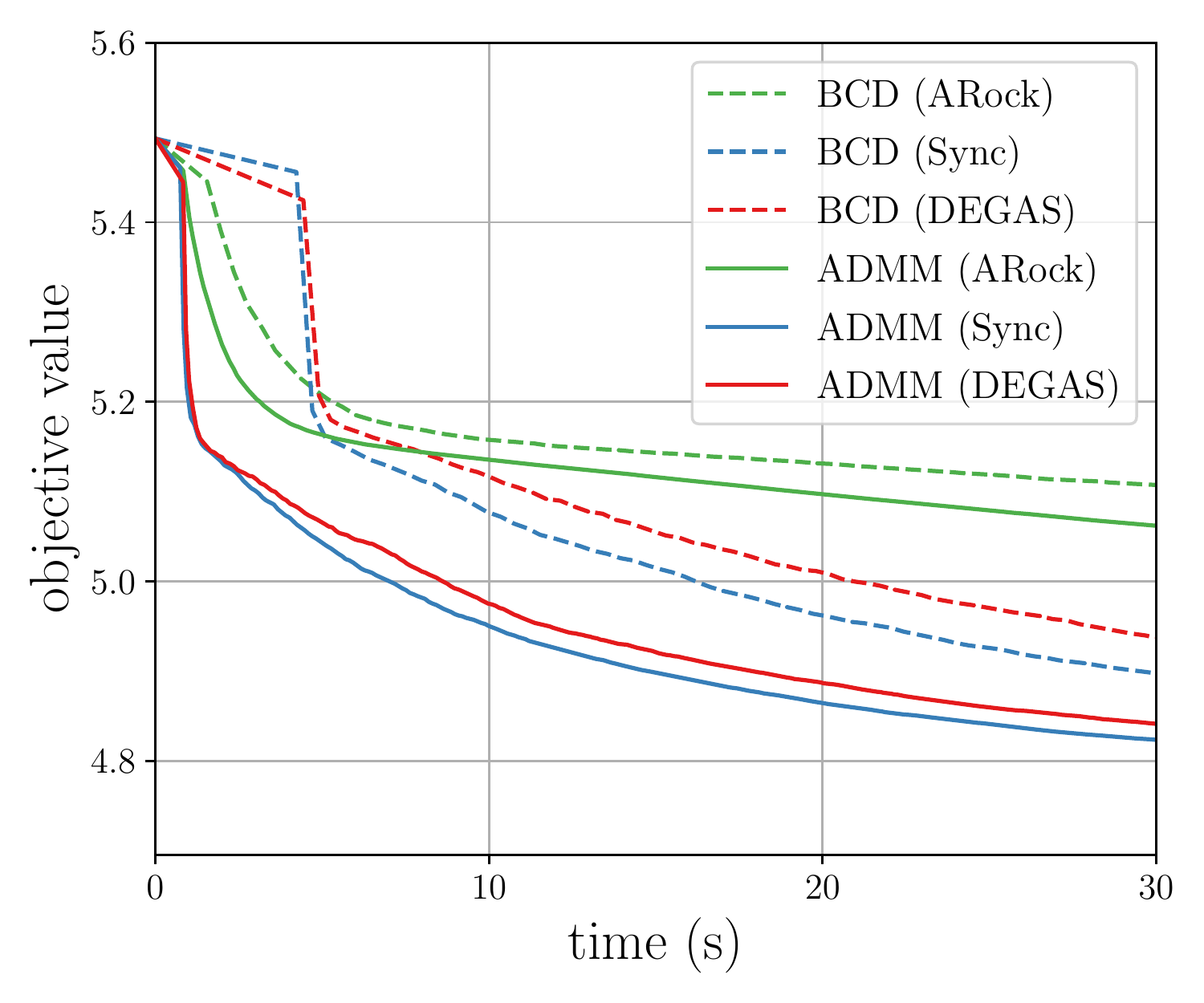}\label{fig:lasso_cifar_time}}
    \subfigure[with straggler]{	\includegraphics[scale=0.4]{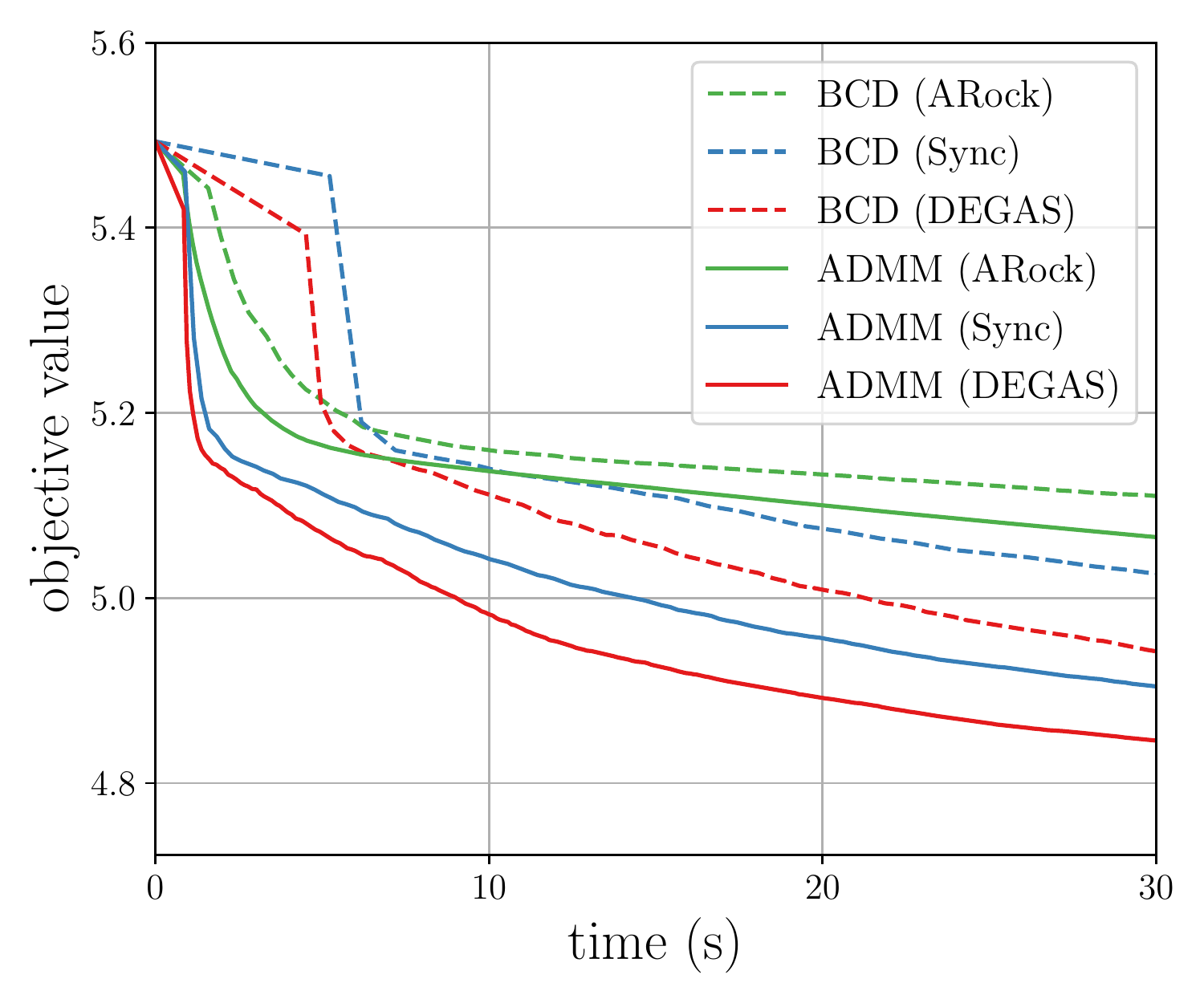}\label{fig:lassocifar_straggler_time}}
    \label{fig:stragglerLasso}
\end{figure*}

\begin{figure*}[htb]
    \centering
    \caption{Convergence for solving Logistic regression: no straggler v.s. with straggler}
    \subfigure[no straggler]{
	\includegraphics[scale=0.4]{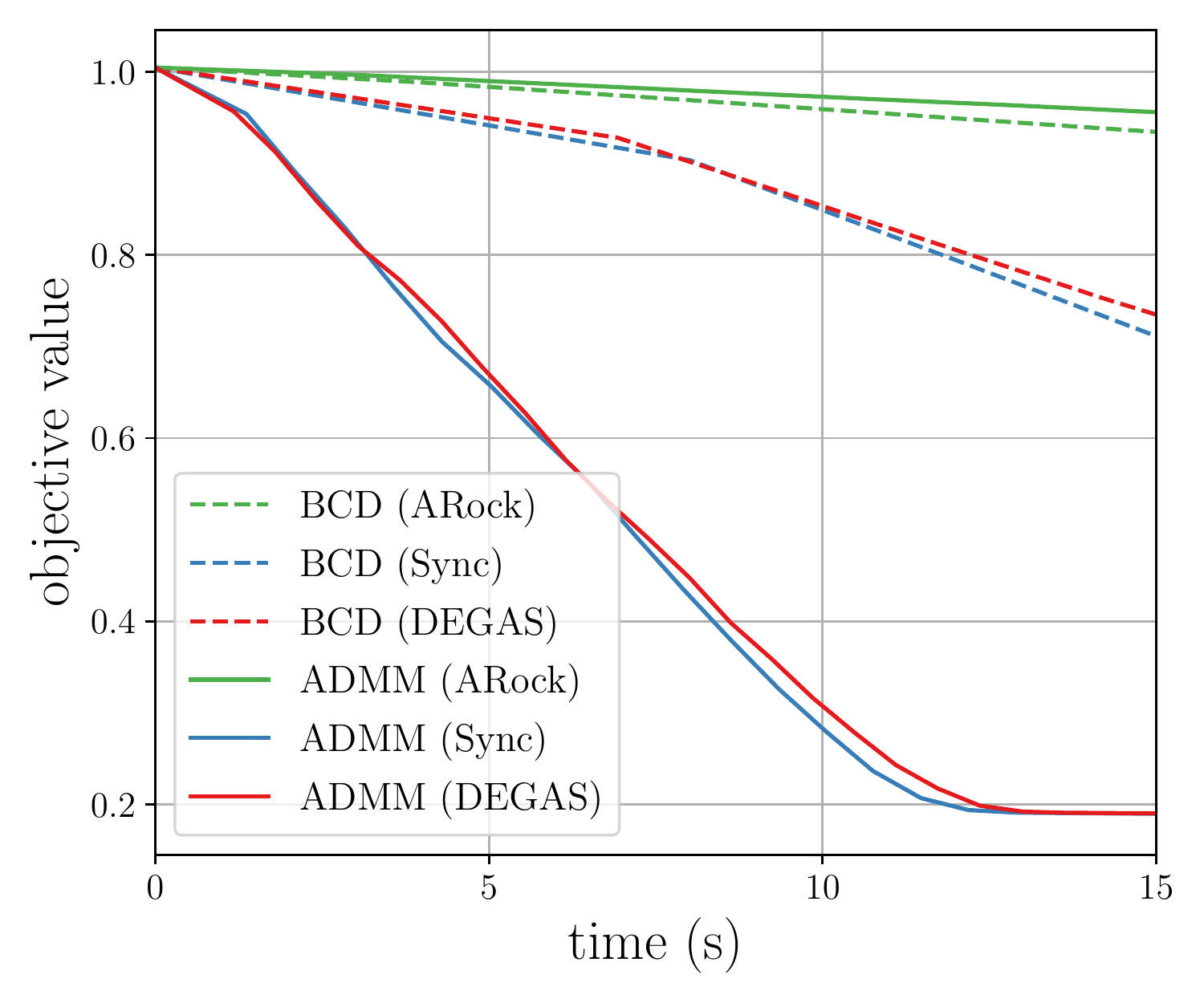}\label{fig:logisticrcv_time}}
    \subfigure[with straggler]{
    \includegraphics[scale=0.4]{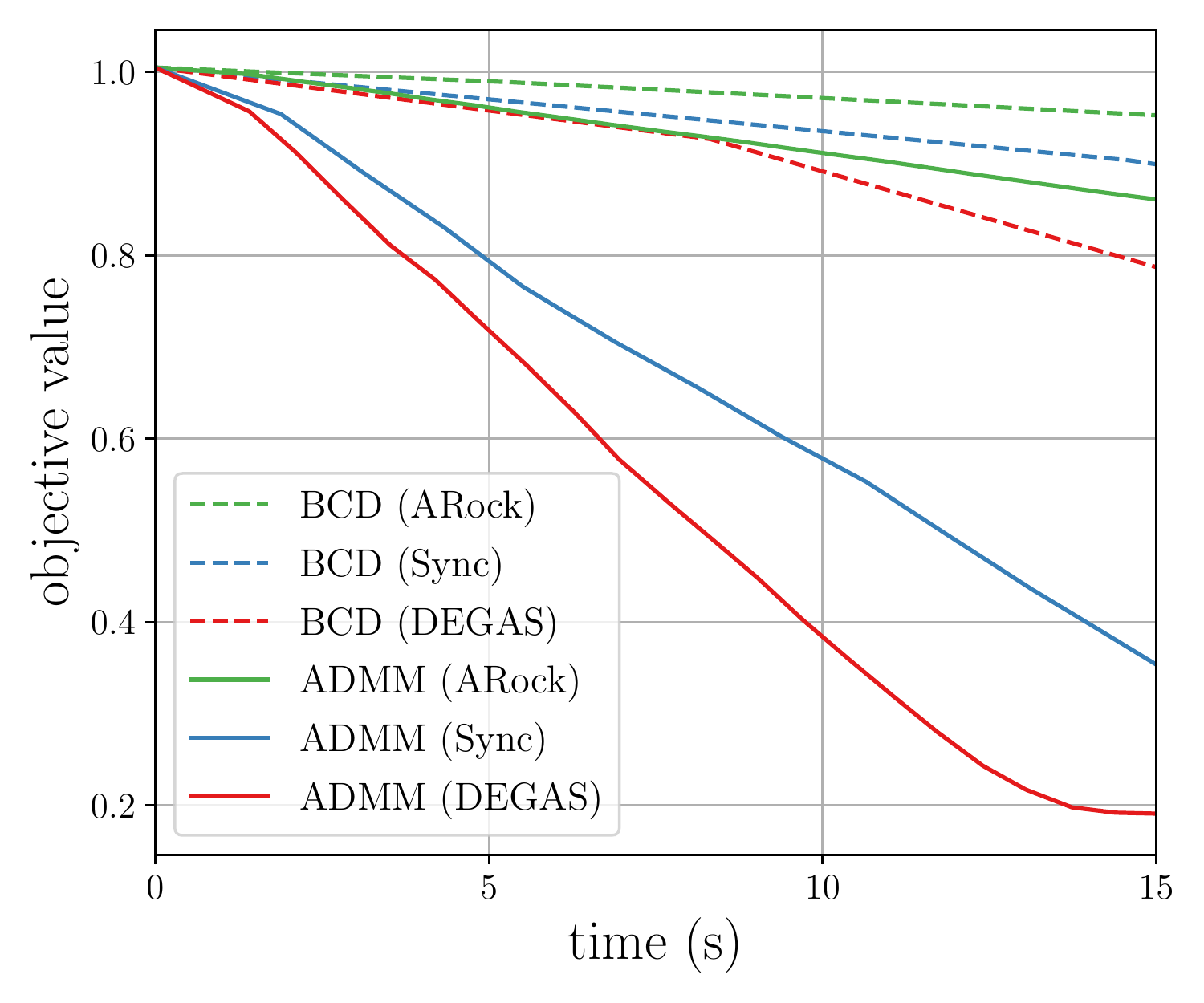}\label{fig:logisticrcv_iter}}
    \label{fig:stragglerLogistic}
\end{figure*}

\end{document}